\newtheorem{theorem}{Theorem}
\newtheorem{lemma}{Lemma}
\newtheorem{example}{Example}
\newtheorem{definition}{Definition}
\newtheorem{assumption}{Assumption}
\newtheorem{corollary}{Corollary}
\newcommand{\R}{\mathbb{R}}
\newcommand{\X}{\mathbb{X}}
\newcommand{\U}{\mathbb{U}}
\newcommand{\TTS}{\text{TTS}}
\newcommand{\G}{\mathcal{G}}
\newcommand{\E}{\mathcal{E}}
\newcommand{\V}{\mathcal{V}}
\newcommand{\M}{\mathcal{M}}
\newcommand{\D}{\mathcal{D}}
\newcommand{\N}{\mathcal{N}}
\newcommand{\de}{d_e \big( \rho_e(t) \big)}
\newcommand{\se}{s_e \big( \rho_e(t) \big)}
\title{An Exact Convex Relaxation of the Freeway Network Control Problem with Controlled Merging Junctions}
\author{Marius Schmitt\thanks{Automatic Control Laboratory, ETH Zurich, Switzerland,     {\tt\small <schmittm,lygeros>@control.ee.ethz.ch}}
\thanks{Corresponding author} \ and John Lygeros$^*$
}
\begin{document}

\maketitle

\begin{abstract}
We consider the freeway network control problem where the aim is to optimize the operation of traffic networks modeled by the Cell Transmission Model via ramp metering and partial mainline demand control. Optimal control problems using the Cell Transmission Model are usually non-convex, due to the nonlinear fundamental diagram, but a convex relaxation in which demand and supply constraints are relaxed is often used. Previous works have established conditions under which solutions of the relaxation can be made feasible with respect to the original constraints. In this work, we generalize these conditions and show that the control of flows into merging junctions is sufficient to do so if the objective is to minimize the total time spent in traffic. We derive this result by introducing an alternative system representation. In the new representation, the system dynamics are concave and state-monotone. We show that exactness of the convex relaxation of finite horizon optimal control problems follows from these properties. Deriving the main result via a characterization of the system dynamics allows one to treat arbitrary monotone, concave fundamental diagrams and several types of control for merging junctions in a uniform manner. The derivation also suggests a straightforward method to verify if the results continue to hold for extensions or modifications of the models studied in this work.
\end{abstract}

\section{Introduction} \label{sec:introduction} 

We study the freeway network control (FNC) problem, that is, the problem of optimal operation of freeway traffic for networks modeled using a variant of the cell transmission model (CTM) \cite{daganzo1994cell,daganzo1995cell}, with the standard objective of minimizing the total time spent (TTS). The CTM is a first-order traffic model obtained as a discretization of the kinematic wave model \cite{lighthill1955kinematic,richards1956shock}. It describes road traffic by a linear conservation law and the nonlinear fundamental diagram, which models the relationship between traffic flow and traffic density. Finite-horizon optimal control problems for systems modeled by the CTM lead to nonconvex optimization problems in general, due to the nonlinear fundamental diagram. However, a convex optimization problem is obtained if the demand and supply constraints encoded in the fundamental diagram are relaxed. In particular, a linear program (LP) is obtained if a triangular or trapezoidal fundamental diagram is used \cite{ziliaskopoulos2000linear}. In general, an optimal solution of the relaxed problem does not satisfy the dynamics of the CTM, but subsequent work has identified conditions for which the relaxation yields solutions to the original problem. In particular, it turns out that for a freeway segment with only onramp junctions and off-ramp junctions solutions to the relaxed problem are feasible with respect to the CTM dynamics \cite{gomes2006optimal}. This result relies on the assumption that onramps are metered and inflow from onramps is not obstructed by mainline congestion, while off-ramps are assumed to be uncongested and hence, they do not obstruct mainline flow via congestion spillback. In \cite{gomes2008behavior} it has been shown that the corresponding CTM model is in fact a discrete-time, monotone system, a generalization of monotone maps \cite{hirsch2005monotone} to systems with inputs. Basic definitions and results on monotone systems are presented in \cite{angeli2003monotone} for the analogous continuous-time case.

It is natural to ask whether monotonicity properties can be leveraged to facilitate the analysis and control of systems based on the CTM. However, it turns out that first-in, first-out (FIFO) diverging junctions as used in the CTM are not monotone \cite{coogan2015compartmental}. Alternative models for diverging junctions with monotone dynamics have been suggested \cite{lovisari2014stability,lovisari2014stability2}, but these models do not preserve the turning rates. In \cite{coogan2016stability} it is shown that the CTM dynamics satisfy a mixed-monotonicity property instead, but it is also suggested that the non-monotone dynamics of FIFO diverging junctions are exactly what dynamic traffic control should target in order to realize improvements over the uncontrolled case. Monotonicity has also been used to analyze robustness of optimal trajectories \cite{como2016convexity}. In addition, traffic routing problems have been considered \cite{como2013robust1,como2013robust2,como2015throughput}. In such problems, the turning rates are not fixed a priori, but they are (partially) actuated variables instead. With time-varying turning rates, diverging junctions do not exhibit FIFO dynamics, allowing one to circumvent the issues arising from the non-monotone effects. In particular, monotone routing policies are resilient to non-anticipated capacity reductions in individual links \cite{como2013robust1,como2013robust2}. Subsequently, a class of distributed, monotone routing policies was proposed, that make use of the implicit back-propagation of congestion to stabilize maximal-throughput equilibria \cite{como2015throughput}.

It has also been suggested that solutions to the relaxed FNC problem (using relaxed demand and supply constraints) for arbitrary networks can be made feasible if traffic demand control is available in every cell of the CTM, for example via variable speed limits \cite{muralidharan2012optimal,como2016convexity}. However, it is questionable whether the assumption of demand control in every cell is realistic, in particular for freeway networks. Even if variable speed limits are implemented, possible operation modes are usually restricted, with only few distinct speed limits to chose from and constraints on how often these may change. Therefore, a crucial question is whether demand control in every cell is necessary to achieve the optimal cost of the relaxed problem, or if, for example, ramp metering is sufficient to do so. A partial answer is known for the special case of a symmetric triangular fundamental diagram in which the congestion wave speed is equal to the free-flow velocity in every cell. In this case, the solution to the relaxed FNC problem can be made feasible by using priority control for flows into merging junctions \cite[Proposition 2]{como2016convexity}.

In this work, we generalize these results and consider CTM networks with FIFO diverging junctions and concave (but not necessarily symmetric or even piece-wise affine (PWA)) fundamental diagrams. We show that if the objective is to minimize the TTS, control of merging flows is sufficient to achieve the same cost as in the relaxed problem. This result allows us to use the convex, relaxed problem to efficiently compute solutions of the original nonconvex FNC problem. The main result of this work relies on the analysis of a novel, alternative system representation of the CTM. It turns out that the system dynamics are concave and state-monotone in the new representation. This allows us to employ results originally derived for convex, monotone systems \cite{rantzer2014control} to show equivalence of the convex relaxation to the nonconvex optimal control problem. We generalize existing results, in particular \cite{como2016convexity} where a related problem is addressed, in the following ways:
 \begin{itemize}
	\item Our main result applies to CTM networks with general concave, monotone fundamental diagrams. The existing result holds only for affine demand and supply functions with \emph{identical} slopes (of opposite sign)\footnote{ A different result \cite[Proposition 1]{como2016convexity} allows for more general fundamental diagrams, but it requires demand control in \emph{every} cell, as opposed to only in cells immediately upstream of merging junctions.}, i.e., the case when the free-flow speed is equal to the congestion wave speed. Real-world free-flow speeds are typically significantly larger than congestion wave speeds.
	\item The main result is based on a novel system reformulation, in which the system dynamics are state-monotone and concave. The reformulation of the system dynamics links the result to properties of the dynamical system itself and suggests a straightforward method to verify if the results continue to hold for extensions or modifications of the models studied in this work.
\end{itemize}

This paper is structured as follows: in Section \ref{sec:csm}, we introduce results on the optimal control of concave, state-monotone systems that will be used subsequently. The freeway network model is introduced in Section \ref{sec:networks}. In Section \ref{sec:CCTM}, we perform a transformation of the system equations to derive an equivalent system representation and show that it is concave and state-monotone. This allows us to prove the main result of this work: the derivation of an exact, convex relaxation of the FNC problem for networks with controlled merging junctions. We contrast the dynamics of merging and diverging junctions in the original system model with the alternative representation in Section \ref{sec:closer} in order to demonstrate the applicability and limitations of our results. In Section \ref{sec:application}, we apply the main result to compute optimal open-loop control inputs for two freeway network examples, a real world freeway and an artificial freeway network designed to showcase the behavior of merging and diverging junctions. In Section \ref{sec:conclusions}, we summarize our contributions and provide suggestions for future work. \\

\section{Preliminaries} \label{sec:csm} 

Consider the discrete-time dynamical system with state $x(t) \in \X  \subseteq \R^n$, input $u(t) \in \U \subseteq \R^m$ and dynamics
\begin{equation*}
x(t+1) = f_t \big( x(t),u(t) \big) ~,
\end{equation*}
with $f_t : \X \times \U \rightarrow \X$ and the index $t$ indicating that the dynamics are allowed to be time-varying. In this work, we are interested in the special case when the system dynamics are concave and state-monotone.
\begin{definition}
\label{def:state_monotone}
A function $f : \X \times \U \rightarrow \R^l$ is called \emph{state-monotone} if for all $x_1 \in \X$, $x_2 \in \X$ such that $x_1 \geq x_2$ it holds that $f(x_1,u) \geq f(x_2,u)$ for all $u \in \U$.
\end{definition}
In this definition as well as in the remainder of this work, all inequalities are interpreted elementwise. 
\begin{definition}
\label{def:csm}
A dynamical system is called \emph{concave, state-monotone} if 
\begin{itemize}
\setlength\itemsep{-0.2em}
	\item[(i)] the system dynamics $f_t( x, u)$ are state-monotone and
	\item[(ii)] the system dynamics $f_t( x, u)$ are jointly concave in $x$ and $u$ for all $t$ and the sets $\X$ and $\U$ are closed and convex.
\end{itemize}
\end{definition}
Our definition of a state-monotone system is closely related to the standard definition of a cooperative system, a special case of an order preserving monotone system \cite{angeli2003monotone}. Note that the previous reference considers continuous-time systems while we consider the discrete-time case exclusively.

Systems whose dynamics are both monotone and convex in state and input have been studied in \cite{rantzer2014control} and much of the remainder of this section follows their reasoning and results. Our motivation to present the results in terms of maximization of the cost of operation of a concave system, instead of minimization of the cost of a convex system, is their subsequent application to the maximization of flows in the FNC problem. Unlike \cite{rantzer2014control}, we drop the assumption of monotonicity in the inputs, but make additional assumptions on the control objective and potential constraints as in \cite{schmitt2017convex}. In particular, we assume that the control objective is the minimization of a stage-wise, potentially time-varying cost $c_t : \X \times \U \to \R$ which is itself concave and state-monotone for every $t$. We also allow (potentially time-varying) input-state constraints $g_t ( x(t) , u(t) ) \geq 0$, where the functions $g_t : \X \times \U \to \R^c$  (with $c$ the number of constraints) are also assumed to be concave and state-monotone. For concave, state-monotone systems equipped with a stage-wise concave, state-monotone cost and concave, state-monotone constraints, we consider the finite-horizon optimal control problem
\begin{equation} 
\begin{array}{rrll}
\mathcal{P}^* := & \underset{ x(t), u(t) }{\text{maximize}} & \sum_{t = 0}^{T-1} c_t \big( x(t), u(t) \big) + c_T \big( x(T) \big) \\ [2ex]
 & \text{subject to} & x(t+1) = f_t \big( x(t), u(t) \big) &\quad\forall t \in \{0,1,\dots,T-1\} \\[1ex]
 & & g_t \big( x(t), u(t) \big) \geq 0 &\quad\forall t \in \{0,1,\dots,T-1\}  \\[1ex]
 & & x(0) ~\text{given},
\end{array}
\label{eq:finitehorizon}
\end{equation}
Here, $\mathcal{P}^*$ denotes the optimal value. This problem is nonconvex due to the nonlinear equality constraints encoding the system dynamics. However, it turns out that the convex relaxation
\begin{equation} 
\begin{array}{rrll}
\mathcal{R}^* := & \underset{ z(t), v(t) }{\text{maximize}} & \sum_{t = 0}^{T-1} c_t \big( z(t), v(t) \big) + c_T \big( z(T) \big) \\ [2ex]
 & \text{subject to} & z(t+1) \leq f_t \big( z(t), v(t) \big) &\quad\forall t \in \{0,1,\dots,T-1\} \\[1ex]
 & & g_t ( z(t), v(t) ) \geq 0 &\quad\forall t \in \{0,1,\dots,T-1\} \\[1ex]
 & & z(0) = x(0) ~\text{given}
\end{array}
\label{eq:relaxation}
\end{equation}
with optimal value $\mathcal{R}^*$ can be used to solve the original problem. Note that the equality constraints have been relaxed in problem \eqref{eq:relaxation}.
\begin{theorem}
\label{theorem:csm}
For a concave, state-monotone system $x(t+1) = f_t(x(t),u(t))$ with concave, state-monotone stage costs $c_t(x(t),u(t))$ and concave, state-monotone constraints $g_t( x(t), u(t) )$, the convex relaxation \eqref{eq:relaxation} is exact in the sense that the optimal values $\mathcal{P}^*$ and $\mathcal{R}^*$ coincide and hence, any optimizer of \eqref{eq:finitehorizon} is also an optimizer of \eqref{eq:relaxation}.
\end{theorem}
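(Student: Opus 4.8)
The plan is to establish the two inequalities $\mathcal{R}^* \geq \mathcal{P}^*$ and $\mathcal{R}^* \leq \mathcal{P}^*$ separately; the second is where the work lies. The first is immediate: any $(x(t),u(t))_t$ feasible for \eqref{eq:finitehorizon} satisfies the dynamics with equality, hence in particular $x(t+1) \leq f_t(x(t),u(t))$, so it is also feasible for \eqref{eq:relaxation} with identical objective value. Taking the supremum over feasible points gives $\mathcal{R}^* \geq \mathcal{P}^*$. This same observation also yields the closing statement of the theorem for free: once the optimal values are known to be equal, any optimizer of \eqref{eq:finitehorizon} is feasible for \eqref{eq:relaxation} and attains the value $\mathcal{P}^* = \mathcal{R}^*$, hence is an optimizer of \eqref{eq:relaxation}.

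For $\mathcal{R}^* \leq \mathcal{P}^*$, I would fix an \emph{arbitrary} feasible point $(z(t),v(t))_t$ of \eqref{eq:relaxation} and close the gap in the relaxed dynamics by forward simulation: set $u(t) := v(t)$ for all $t$, $x(0) := z(0)$, and $x(t+1) := f_t(x(t),u(t))$ for $t = 0,\dots,T-1$. The core step is a joint induction on $t$ proving that $x(t) \in \X$ and $x(t) \geq z(t)$. The base case holds by construction together with $x(0) = z(0) \in \X$. For the inductive step, $x(t) \in \X$ and $u(t) = v(t) \in \U$ yield $x(t+1) = f_t(x(t),u(t)) \in \X$ because $f_t : \X \times \U \to \X$; and from $x(t) \geq z(t)$ (with both in $\X$), state-monotonicity of $f_t$, and the relaxed constraint we obtain $x(t+1) = f_t(x(t),v(t)) \geq f_t(z(t),v(t)) \geq z(t+1)$.

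It then remains to check that $(x(t),u(t))_t$ is feasible for \eqref{eq:finitehorizon} and that its objective value is at least that of $(z(t),v(t))_t$ in \eqref{eq:relaxation}. The dynamics hold by construction and $x(0)$ is the prescribed initial condition; the input-state constraints follow from state-monotonicity of $g_t$ applied to $x(t) \geq z(t)$ with common input, namely $g_t(x(t),u(t)) = g_t(x(t),v(t)) \geq g_t(z(t),v(t)) \geq 0$. For the objective, state-monotonicity of each $c_t$ and of $c_T$ gives $c_t(x(t),u(t)) \geq c_t(z(t),v(t))$ and $c_T(x(T)) \geq c_T(z(T))$, so summing over $t$ shows the value of $(x(t),u(t))_t$ in \eqref{eq:finitehorizon} is no smaller than the value of $(z(t),v(t))_t$ in \eqref{eq:relaxation}. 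Hence $\mathcal{P}^*$ dominates the value of every feasible point of \eqref{eq:relaxation}, and taking the supremum over those points gives $\mathcal{P}^* \geq \mathcal{R}^*$.

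I expect the only genuinely delicate part to be the bookkeeping in the induction, in particular verifying that the forward-simulated trajectory never leaves $\X$ and never violates $g_t \geq 0$; this is precisely where state-monotonicity of $f_t$ and of $g_t$ is indispensable. By contrast, concavity of $f_t$, $c_t$, $g_t$ and convexity of $\X$, $\U$ play no role in this exactness argument at all: they are needed only to guarantee that \eqref{eq:relaxation} is a genuine convex (hence tractable) program, which is the practical motivation for passing to the relaxation. A secondary point I would state carefully is that the argument does not presuppose the existence of an optimizer of \eqref{eq:relaxation}: it produces, for every feasible point of the relaxation, a feasible point of the original problem with at least as large an objective, and the value identity follows by comparing suprema.
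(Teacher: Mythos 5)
Your proof is correct and takes essentially the same route as the paper: forward simulation of the dynamics with the relaxation's inputs, an induction using state-monotonicity of $f_t$ to get $x(t) \geq z(t)$, and then state-monotonicity of $g_t$ and $c_t$ to transfer feasibility and dominate the objective, combined with the trivial inequality $\mathcal{P}^* \leq \mathcal{R}^*$. Your refinements are minor but sound ones relative to the paper's writeup — arguing from an arbitrary feasible point of \eqref{eq:relaxation} rather than a presumed optimizer, explicitly carrying the terminal cost $c_T(x(T))$, and noting that concavity serves only to make \eqref{eq:relaxation} a convex program rather than to establish exactness.
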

The relaxation \eqref{eq:relaxation} may have non-unique optimizers, some of which are not feasible in \eqref{eq:finitehorizon}, but the proof also reveals how to construct an optimizer of the original problem from any optimizer of the convex relaxation.
\begin{proof}
The functions describing the objective, the system dynamics and the constraints are all concave and thus the relaxation \eqref{eq:relaxation} is indeed a convex optimization problem. Assume now that $\big( z^*(t), ~ v^*(t) \big)$ is an optimizer of the relaxed problem. Consider the candidate solution $u^*(t) = v^*(t)$, $x^*(0) = z^*(0)$ and $x^*(t+1) = f_t(x^*(t),u^*(t))$. Note that from $x^*(0) = z^*(0)$ and state-monotonicity of the dynamics, it follows inductively that
\begin{equation*}
x^*(t) = f_t \big( x^*(t-1),u^*(t-1) \big) \geq f_t \big( z^*(t-1),u^*(t-1) \big) \geq z^*(t) \quad \forall t \in \{1, \dots, T\} ~.
\end{equation*}
By state-monotonicity of the constraints, this in turn implies that
\begin{equation*}
g_t \big( x^*(t), u^*(t) \big) \geq g_t \big( z^*(t), v^*(t) \big) \geq 0 \quad \forall t \in \{1, \dots, T\} ~,
\end{equation*}
that is, $\big( x^*(t), ~u^*(t) \big)$ is a feasible solution of the original problem. Furthermore, by state-monotonicity of the cost,
\begin{equation*}
\mathcal{P}^* \geq \sum_{t = 0}^{T-1} c_t \big( x^*(t), u^*(t) \big) \geq \sum_{t = 0}^{T-1} c_t \big( z^*(t), v^*(t) \big) = \mathcal{R}^* ~.
\end{equation*}
Since the second problem is a relaxation of the first, $\mathcal{P}^* \leq \mathcal{R}^*$, and the claim follows.
\end{proof}
The existence of an exact, convex relaxation in the sense of Theorem \ref{theorem:csm} allows the efficient solution of finite-horizon optimal control problems and is the main reason for our interest in concave, state-monotone systems. An analogous statement can be made for convex, state-monotone systems if the objective is the minimization of a stage cost.

\section{Problem statement} \label{sec:networks} 

We consider the freeway network control (FNC) problem \cite{ziliaskopoulos2000linear,muralidharan2012optimal,como2016convexity}, where actuation is restricted to demand control but traffic routing cannot be influenced. Our traffic model is very similar to the one used in the latter reference, although we make additional assumptions, in particular on merging junctions, that will be detailed later. Similar models are also studied e.g.\ in \cite{coogan2015compartmental, coogan2016stability}, where the focus is on stability of system equilibria. These models are based on the CTM \cite{daganzo1994cell,daganzo1995cell}.
\begin{figure}[t] 
	\centering
	\begin{subfigure}[b]{0.5\textwidth}
		\includegraphics[width=\textwidth]{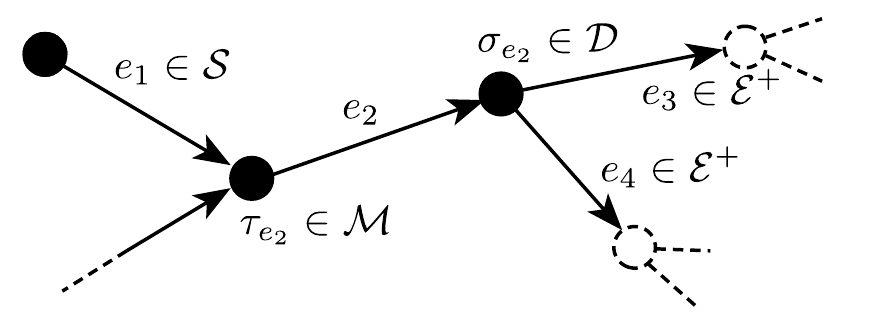}
		\caption{Example notation}
		\label{fig:graph_notation}
	\end{subfigure} \hspace{1cm}	
	\begin{subfigure}[b]{0.27\textwidth}
		\includegraphics[width=\textwidth]{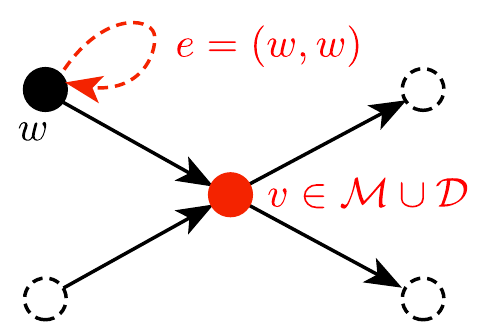}
		\caption{Violations of Assumption \ref{assumption:graph}}
		\label{fig:graph_violation}
	\end{subfigure}
	\caption{The notation is illustrated in the subgraph on the left. The subgraph on the right violates Assumption \ref{assumption:graph}, in particular, edge $e$ violates the assumption of no self loops and vertex $v$ violates the assumption that vertices are not both merging and diverging junctions.}
	\label{fig:graph}
\end{figure} 
The freeway network is represented by a directed graph $\G = (\V ,\E )$ in which the edges $e \in \E \subseteq \V \times \V$ represent \emph{cells}, that is, sections of road, and the vertices represent (merging or diverging) junctions or interfaces between consecutive cells. We denote the tail of an edge $e$ as $\tau_e$ and the head as $\sigma_e$, i.e., $e = (\tau_e,\sigma_e)$. Traffic flows from tail $\tau_e$ to head $\sigma_e$. For an edge $e$ we define the set of \emph{downstream} cells as $\E^+(e) := \{ i \in \E : \tau_i = \sigma_e \}$. The indegree, that is, the number of incoming edges of a junction (vertex) $v$ is denoted $\deg^-(v)$ and the outdegree, the number of edges leaving a junction $v$, is $\deg^+(v)$. Junctions with two or more outgoing edges are called \emph{diverging} junctions and the set of all diverging junctions is denoted $\mathcal{D} := \{ v \in \V : \deg^+(v) > 1 \}$. Similarly, vertices with more than one incoming edge are called \emph{merging} junctions and we write $\mathcal{M} := \{ v \in \V : \deg^-(v) > 1 \}$ for the set of all merging junctions. We use $\N := \{e \in \E : \sigma_e \in \M \}$ to denote the set of all cells directly upstream of a merging junction. A vertex with $\deg^+(v) = 0$ is called a \emph{sink}. An edge without upstream edges is called a \emph{source cell} and we introduce the set of all source cells $\mathcal{S} := \{ e \in \E : \deg^-( \tau_e ) = 0 \}$. The notation is summarized in Figure \ref{fig:graph}.

\begin{assumption} \label{assumption:graph}
The directed network graph $\G$ does not contain self-loops, that is, edges of the form $e = (v,v)$. In addition, we assume that no vertex is both a merging and diverging junction ($\M ~\cap~ \D = \varnothing$) and that merging junctions are not sinks.
\end{assumption}

The state of each cell is described by the traffic \textit{density} $\rho_e(t)$, i.e.,\ the number of cars in a cell divided by the \emph{length} of the cell $l_e$. We adopt a discrete-time model in which the evolution of the system is described by flows of cars during discrete time intervals of duration $\Delta t$. For each cell, we define the \emph{flow} $\phi_e(t)$ as the traffic flow out of cell $e$ during the time interval $t$. We introduce \emph{turning rates} $0 < \beta_{i,j} \leq 1$ for any two adjacent cells $j$ and $i$ ($\sigma_j = \tau_i$) to model how traffic flows are distributed onto multiple downstream cells. The turning rate $\beta_{i,j}$ models the percentage of flow leaving cell $j$ that travels to cell $i$. We assume that the turning rates are invariant in time. To simplify notation, we also define $\beta_{i,j} = 0$ whenever $\sigma_j \neq \tau_i$. The conservation of traffic requires that $ \sum_{j \in \E} \beta_{i,j} \leq 1$. We allow for $ \sum_{j \in \E} \beta_{i,j} < 1$ and we assume that the percentage of flow that is not distributed onto the downstream cells leaves the network, e.g.\ via an off-ramp. In addition to the flows within the network, external inflows $w_e(t)$ may enter the source cells $e \in \mathcal{S}$. The traffic densities evolve according to the conservation law
\begin{equation}
\label{eq:ctm_conservation}
\rho_e(t+1) = \rho_e(t) + \frac{\Delta t}{l_e} \cdot \left( \sum_{i \in \E} \beta_{e,i} \phi_i(t) ~ - \phi_e(t) + w_e(t) \right) \quad \forall e \in \E ~.
\end{equation}
Note that for non-source cells $e \notin \mathcal{S}$, the external inflows are zero ($w_e(t) = 0$), while for source cells, $\sum_{i \in \E} \beta_{e,i} \phi_i(t)  = 0$ according to the definition of $\mathcal{S}$. The CTM is a first-order model where the flows $\phi(t)$ are computed as functions of the states $\rho(t)$. In general, the traffic flows depend on the traffic \emph{demand}, the number of cars that seek to travel downstream within a time interval, and the \emph{supply} of free space in downstream cells. To model this behavior, we introduce demand and supply functions for each cell. 
In the work of \cite{daganzo1994cell,daganzo1995cell}, piecewise-affine demand and supply functions were derived from the Godunov discretization of the Lighthill-Whitham-Richards (LWR) model \cite{lighthill1955kinematic,richards1956shock}. In practice, one might want to consider more general demand and supply functions, in order to better approximate real-world data, see e.g.\ \cite{como2016convexity, coogan2016stability,lovisari2014stability} for recent examples. In the remainder of this work, we will assume that:
\begin{assumption} 
\label{assumption:fd}
For every cell $e$ a maximal density $\bar\rho_e$, called the traffic jam density, is defined. The demand functions $d_e: [0,\bar \rho_e] \rightarrow \mathbb{R}_+$ are concave, Lipschitz continuous with Lipschitz constant $\gamma_d$, nondecreasing and satisfy $d_e(0) = 0$. Conversely, the supply functions $s_e: [0,\bar \rho_e] \rightarrow \mathbb{R}_+$ are concave, Lipschitz-continuous with Lipschitz constant $\gamma_s$, nonincreasing and satisfy $s_e(\bar \rho_e) = 0$. Furthermore, the sampling time $\Delta t$ is chosen such that it satisfies the bound
\begin{subequations}
\begin{align}
\Delta t \leq  \frac{ \min_{e \in E} \{l_e \}}{\max \{ \gamma_d, \gamma_s \} } ~.
\end{align}
\end{subequations}
We allow for cells with infinite capacity, that is, cells with both infinite traffic jam densities $\bar\rho_e = +\infty$ and infinite supply $s_e( \rho_e(t) ) = +\infty$ for all $\rho_e(t) \in [0, +\infty)$. In such a case, the demand function $d_e( \rho_e(t) )$ has to be defined for $\rho_e(t) \in [0, +\infty)$.
\end{assumption} 

\begin{figure} 
	\centering
	\begin{subfigure}[b]{0.3\textwidth}
	  	\setlength{\unitlength}{0.1\textwidth}
  		\begin{picture}(10,9)
    			\put(0,0){\includegraphics[width=5cm]{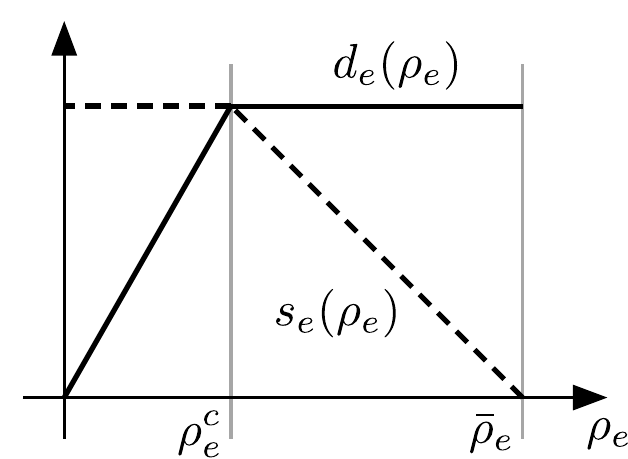}}
 		\end{picture}
		\caption{PWA demand and supply functions.}
		\label{fig:ctm_fd_a}
	\end{subfigure} ~ 
	\begin{subfigure}[b]{0.3\textwidth}
	  	\setlength{\unitlength}{0.1\textwidth}
  		\begin{picture}(10,9)
    			\put(0,0){\includegraphics[width=5cm]{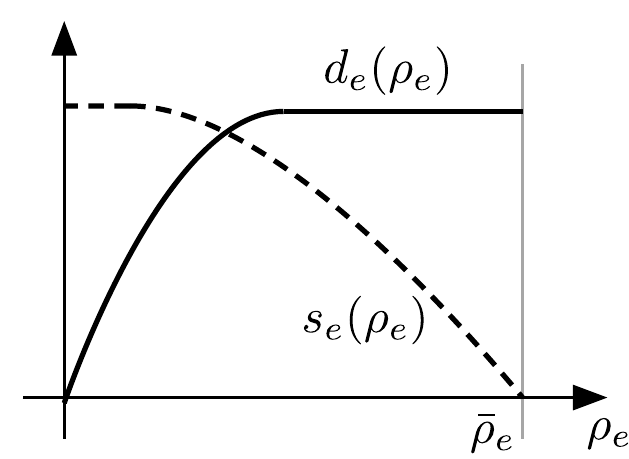}}
 		\end{picture}
		\caption{Concave demand and supply functions}	
		\label{fig:ctm_fd_b}
	\end{subfigure} ~ 
		\begin{subfigure}[b]{0.3\textwidth}
	  	\setlength{\unitlength}{0.1\textwidth}
  		\begin{picture}(10,9)
    			\put(0,0){\includegraphics[width=5cm]{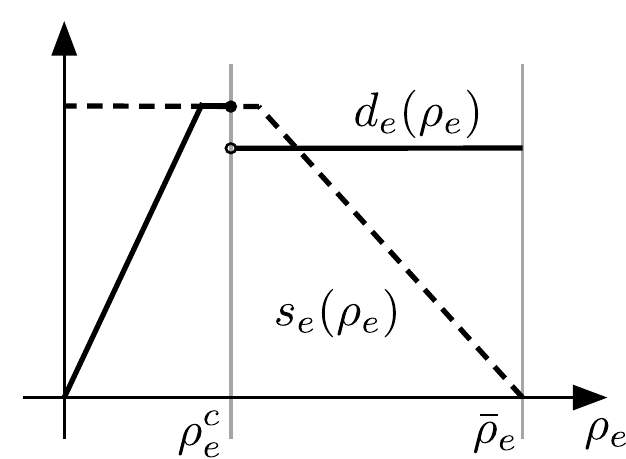}}
 		\end{picture}
		\caption{ Demand function with capacity drop.}	
		\label{fig:ctm_fd_c}
	\end{subfigure} 
	\caption{Different shapes of the demand and supply functions may be desirable in order to approximate real-world data. Figure (a) depicts the traditional PWA versions. Figure (b) depicts concave demand and supply functions, satisfying Assumption \ref{assumption:fd} for suitable $\Delta t$. Figure (c) shows a demand function with a capacity drop in congestion, which does \emph{not} satisfy Assumption \ref{assumption:fd}.}
	\label{fig:fd}
\end{figure}
 The classical, PWA demand and supply functions satisfy this assumption for suitable $\Delta t$, since $ \de = \min \left\{ v_e \rho_e,~ \frac{w_e}{v_e + w_e} \bar\rho_e \right\}$ and $ \se = \min \left\{ \frac{w_e}{ v_e + w_e} \bar\rho_e ,~ (\bar\rho_e - \rho_e) w_e \right\}$ with $v_e$ the free-flow speed and $w_e$ the congestion wave speed. The demand is non-decreasing and the supply is non-increasing by definition. Additionally, both functions are concave since they are defined as the pointwise minimum of affine functions. The condition on $\Delta t$ can be recognized as the stability condition $v_e \cdot \Delta t \leq l_e$ for all $e$, that arises if the CTM is derived as a discretization of the LWR model. Note that there is empirical evidence for the so-called capacity drop, that is, a non-monotone demand function as depicted in Figure \ref{fig:ctm_fd_c} \cite{kontorinaki2017first}. Assumption \ref{assumption:fd} excludes such behavior, but helps to keep the problem tractable and is in line with most of the literature on the FNC problem. Nevertheless, we will also demonstrate in Section \ref{sec:heuristic} on a freeway ramp metering example how our results can be used to design efficient heuristics that allow to target the effects of the capacity drop, even though the theoretical results do not directly extends to such models.

Using demand and supply functions, we are now ready to state the equations for the flows. For non-merging flows $\phi_e(t)$, that is, for $e \notin \N$, the flows are given as the minimum of upstream traffic demand and downstream supply of free space. Cases in which not all demand can be served are modeled using the first-in, first-out (FIFO) model: For every cell $e \notin \N$, the \emph{demand satisfaction} is computed as
\begin{equation*}
\kappa_e(t) = \min \left\{ 1, \underset{i \in \E^+(e)}{\min} \left\{ \frac{ s_i \big( \rho_i(t) \big)}{ \beta_{i,e} \cdot \de } \right\} \right\}.
\end{equation*}
Note that for any sink $s$, the set of downstream cells is empty, that is, $\E^+(s) = \varnothing$ and therefore $\kappa_s(t) = 1$. Using the definition of demand satisfaction, the flows are computed as
\begin{equation*}
\phi_e(t) = \kappa_e(t) \cdot \de \quad \forall e \in \E \setminus \N
\end{equation*}
The turning rates $\beta_{i,j}$ are respected at all times in the FIFO model. This implies that congestion in any one of the cells immediately downstream of a diverging junction will also reduce the flow to other such cells. We will have a closer look at this effect and its implications for optimal traffic control in Section \ref{sec:closer}. Recall also that we have assumed that turning rates are constant in time. This assumption is essential to in the FNC problem (defined subsequently in \eqref{eq:problem_original}, see also \cite{como2016convexity}), in which route choice is not externally controlled, but at discretion of the individual drivers. Time-varying turning rates could be exploited by the traffic network operator to indirectly alter the route choice and even the final destination of vehicles. For example, consider a freeway, where the turning rate of vehicles leaving at a particular offramp increases after a certain point in time. This potentially provides an incentive for the operator to create an artificial congestion upstream of the offramp until the increase in the turning rate occurs, as this would result in a larger number of vehicles exiting at the offramp once the corresponding turning rate increases. This effect is an artifact resulting from the use of time-varying turning rates instead of origin-destination maps for individual vehicles. It is intrinsic to the FNC problem and not specific to any particular optimal control approach. 

Similarly, care is required when modeling external demand. Recall that according to the conservation law \eqref{eq:ctm_conservation}, external inflows $w_e(t)$ into source cells $e \in \mathcal{S}$ are \emph{not} a priori constrained by the supply of free space. The alternative, to chose a model which only admits external demand that can be accommodated in a source cell of finite capacity and disregards surplus external demand, is problematic when the objective is the minimization of the TTS. It creates an incentive to cause congestion with the explicit goal of preventing parts of the external demand from entering the network. However, it turns out we can impose constraints on the maximal density in source cells nevertheless (Corollary \ref{lemma:onramps}, Section \ref{sec:application}). Such constraints prevent congestion from extending out of source cells into adjacent, unmodeled parts of the road network, as long as the optimal control problem remains feasible.

\begin{figure}[t] 
	\centering
	\begin{subfigure}[b]{0.31\textwidth}
		\includegraphics[width=\textwidth]{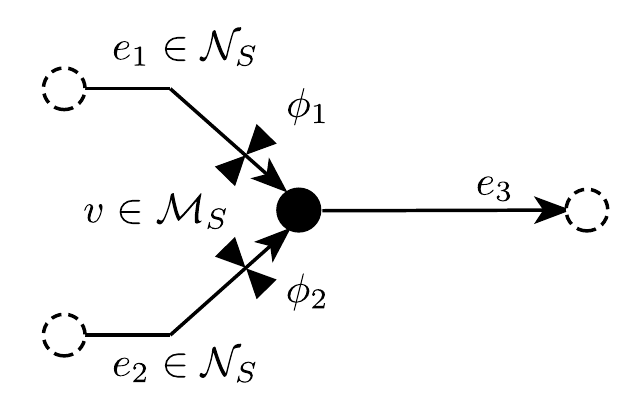}
		\caption{Symmetric junction.}
	\end{subfigure} ~	
	\begin{subfigure}[b]{0.31\textwidth}
		\includegraphics[width=\textwidth]{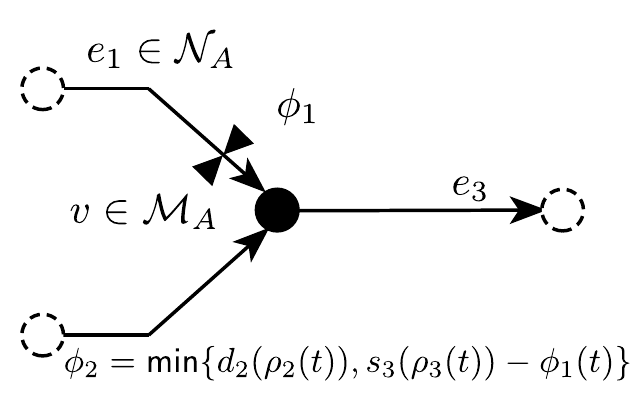}
		\caption{Asymmetric junction.}
	\end{subfigure} ~
	\begin{subfigure}[b]{0.31\textwidth}
		\includegraphics[width=\textwidth]{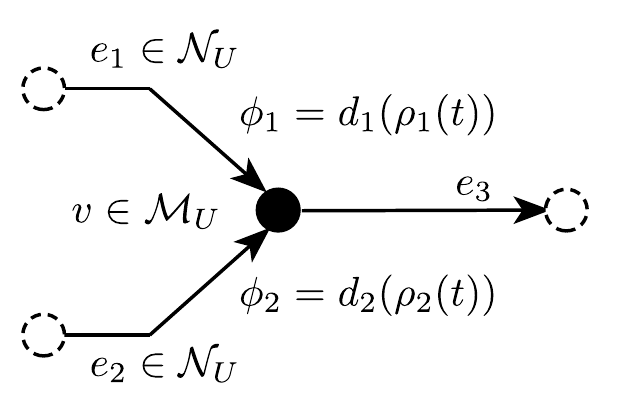}
		\caption{Uncongested junction.}
	\end{subfigure} ~
	\caption{Types of admissible merging junctions. Controlled flows are depicted with a control valve symbol.}
	\label{fig:merges}
\end{figure} 

It remains to define the merging flows, that is, $\phi_e(t)$ for $e \in \N$. Existing models for uncontrolled junctions, such as Daganzo's priority rule \cite{daganzo1995cell} and the proportional-priority merging model \cite{kurzhanskiy2010active,coogan2016stability} differ in how available downstream supply is divided among upstream demand in case combined demand exceeds supply. In contrast to these approaches, we consider (partially) controlled merging junctions, which serve as actuators to optimize the operation of the traffic network. If all merging flows are controlled, we obtain a symmetric model:
\begin{itemize} 
	\item[(i)] In a \emph{symmetric junction} $v \in \V$, we assume that all merging flows are controlled, that is, the flows $\phi_e(t)$ for $\sigma_e = v$ are control inputs, subject to the constraints
\begin{align*}
0 \leq \phi_e(t) &\leq \de, \\[1ex]
\sum_{e \in \E} \beta_{i,e} \cdot \phi_e(t) &\leq s_i \big( \rho_i(t) \big).
\end{align*}
where $i$ denotes the unique edge downstream ($\tau_i = v$) of the merging junction $v$. We define the set of symmetric junctions $\M_S \subset \M$ and we denote the set of cells immediately upstream of a symmetric junction as $\N_S := \{e \in \E: \sigma_e \in \M_S \}$. 
\end{itemize}
Satisfaction of demand and supply constraints in a symmetric junction is ensured by the actuation constraints. To realize a symmetric junction in the real world, all incoming roads could be equipped with traffic lights to enable explicit control of each of the merging flows. Alternatively, the possibility of utilizing velocity control to regulate flows has also been discussed \cite{muralidharan2012optimal}. If actuation is realized via demand control in accordance with the actuation constraints, then the symmetric junction model is consistent with both Daganzo's priority rule and the proportional-priority merging model as described in Section \ref{sec:ex_network}. 

In practice, actuation of all merging flows is not always available. For example, in freeway ramp metering, it is assumed that onramp flows are controlled,  but there is typically no control of the mainline flow. Therefore, we introduce a second merging model suitable for certain metered onramps.
\begin{itemize}
	\item[(ii)] We assume that exactly two cells merge in an \emph{asymmetric junction} $v \in \M_A$, one cell $e$ modeling the mainline and one cell $j \in \N_A$ modeling a road merging into the mainline. The merging flow $\phi_j(t)$ is controlled, subject to the constraint $0 \leq \phi_j(t) \leq d_j \big( \rho_j(t) \big)$, but the flow on the mainline is determined as the minimum of demand and supply
\begin{equation}
\label{eq:ctm_flows2}
\phi_e(t) = \min \bigg\{ \de ~, ~ \frac{1}{\beta_{i,e}}  \Big( s_i \big( \rho_i(t) \big) - \beta_{i,j} \phi_j(t) \Big) \bigg\} ,
\end{equation}
where $i$ denotes the (unique) cell downstream of the asymmetric junction. 
\end{itemize}
The asymmetric junction model is based on the onramp model used in the Asymmetric CTM (ACTM) \cite{gomes2006optimal}. It prioritizes onramp inflows over mainline flows and requires the additional, implicit assumption that merging flows $\phi_j(t)$ for $j \in \N_A$ can always be accommodated on the mainline,
\begin{align}
\label{eq:asmerge_assumption}
s_i \big( \rho_i(t) \big) \geq \beta_{i,j} \cdot d_j \big( \rho_j(t) \big) \quad \forall j \in \N_A,~ \forall t,
\end{align}
to remain well-defined. We will make this assumption and its implications precise in Assumption \ref{assumption:merges} and Lemma \ref{lemma:invariance}. Intuitively, we trade off the need for active control of mainline flows with the additional assumption that mainline congestion does not extend onto onramps $j \in \N_A$. 

Finally, if congestion spillback of a junction can be excluded a priori, control of merging flows turns out not to be advantageous. 
\begin{itemize}
\item[(iii)] Let $\M_U \subset \M$ denote the set of \emph{sub-critical} junctions\footnote{The index $U$ in $\M_U$ can be interpreted as ``uncongested by assumption".} and define the set of cells immediately upstream of such a junction as $\N_U := \{ e : \sigma_e \in \M_U \}$. We assume that all flows into sub-critical junctions are equal to the upstream demand
\begin{equation*}
\phi_e(t) = \de \quad \forall e \in \N_U
\end{equation*}
at all times.
\end{itemize} 
The sub-critical junction is a suitable model for intersections for which typical traffic volume is low, such that traffic densities remain below the critical densities in all relevant operating conditions. While such junctions may not typically be of interest in traffic control, they are included in the analysis to demonstrate that demand control of merging flows into such junctions is not advantageous\footnote{A consequence of Theorem \ref{theorem:CCTM}, see the subsequent discussion in Section \ref{sec:CCTM}.}. This complements the idea employed when defining the asymmetric junction, that a priori exclusion of congestion spillback can alleviate the need for active control of merging flows.
Ultimately, the introduction of asymmetric and sub-critical junctions allows for greater flexibility in the design of the system model used internally by an optimization-based control approach. It should be emphasized that one might encounter traffic networks with merging junctions that are neither actively controlled nor can be approximated using asymmetric or sub-critical junctions. The results in this work do not directly extend to such networks.
\begin{assumption} 
\label{assumption:merges}
The sets of symmetric junctions $\M_S$, asymmetric junctions $\M_A$ and sub-critical junctions $\M_U$ form a partition of the set of merging junctions $\M$. Source cells and cells immediately downstream of sub-critical junctions have infinite capacity, that is, $\bar\rho_e = + \infty$ and $s_e( \rho_e(t) ) = +\infty$. If asymmetric junctions are present in the network ($\M_A \neq \varnothing$), we only consider initial states $\rho(0)$ and external demand profiles $w(t)$ for $t \geq 0$, for which $s_i \big( \rho_i(t) \big) \geq \beta_{i,j} \cdot d_j \big( \rho_j(t) \big)$ for all $j \in \N_A$ and all $i \in \E$ holds for all $t \geq 0$ and any feasible control input sequence.
\end{assumption} 

The different categories of merging junction are depicted in Figure \ref{fig:merges}.Ensuring that certain states are not reachable in networks with asymmetric junctions is clearly hard to verify a priori. In that regard, Assumption \ref{assumption:merges} is similar to the corresponding assumption in the definition of the ACTM in \cite{gomes2006optimal}. Similarly to this reference, we will resort to an a posteriori verification of condition \eqref{eq:asmerge_assumption} for the optimal solution in our numerical study (Section \ref{sec:application}). 

For ease of notation, we also introduce the set of all cells $\mathcal{L} := \E \setminus \big( \N_S \cup \N_A \cup \N_U \big)$, for which flows are given as the minimum of supply and demand instead of being governed by particular merging rules. We collect the equations describing parts of the system model in the following succinct definition.
\begin{definition} 
\label{definition:ctm}
Consider a graph $\G = (V,\E)$ satisfying Assumption \ref{assumption:graph}, where each edge is equipped with a fundamental diagram satisfying Assumption \ref{assumption:fd} and merging junctions (and source cells) satisfy Assumption \ref{assumption:merges}. We define the \emph{CTM with controlled merging junctions} as the system with states $\rho_e(t)$ for $e \in \E$ and inputs $\phi_e(t)$ for $e \in \N_S \cup \N_A$. The state evolves as
\begin{align}
\rho_e(t+1) = \rho_e(t) + \frac{\Delta t}{l_e} \cdot \left( \sum_{i \in \E} \beta_{e,i} \phi_i(t) ~ - \phi_e(t) + w_e(t) \right) \quad & \forall e \in \E
\label{eq:ctm_density}
\end{align}
with
\begin{subequations}
\begin{align}
\phi_e(t) &= \min \left\{ \de,  ~\underset{i \in \E^+(e)}{\min} \left\{ \frac{ s_i \big( \rho_i(t) \big) - \sum_{j \in \N_A} \beta_{i,j} \cdot \phi_j(t) }{ \beta_{i,e}  } \right\}  \right\}  \quad &&\forall e \in \mathcal{L} ~, \label{eq:ctm_flow_a} \\
\phi_e(t) &= \de &&\forall e \in \N_U ~. \label{eq:ctm_flow_b}
\end{align}
\label{eq:ctm_flow}
\end{subequations}
The inputs are subject to the constraints
\begin{subequations}
\begin{align}
0 \leq \phi_e(t) &\leq \de && \forall e \in \N_S \cup \N_A ~, \label{eq:ctm_constraints_demand} \\ 
\sum_{i \in \E} \beta_{e,i} \cdot \phi_i(t) &\leq \se && \forall e : \tau_e \in \M_S ~. \label{eq:ctm_constraints_supply}
\end{align}
\label{eq:ctm_constraints}
\end{subequations}
\end{definition} 

The following technical Lemma ensures that the system evolution is well-defined. 
\begin{lemma} 
\label{lemma:invariance}
Consider a network with controlled merging junctions, with initial state $\rho(0)$ and external demand pattern $w(t)$ ($t \geq 0$) satisfying Assumptions \ref{assumption:graph}, \ref{assumption:fd} and \ref{assumption:merges}. Define $\mathbb{P} := \prod_{e \in \E} [0, \bar\rho_{e}]$ , with $\bar\rho_e = + \infty$ for cells of infinite capacity. Then, for every reachable $\rho(t) \in \mathbb{P}$, there exists a feasible input $\phi_e(t)$. In addition, for every feasible input $\phi_e(t)$, the subsequent state satisfies $\rho(t+1) \in \mathbb{P}$.
\end{lemma}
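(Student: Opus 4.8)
The plan is to treat the two assertions separately, with nearly all of the work in the invariance of $\mathbb{P}$; the existence of a feasible input will be immediate. I would first record a preliminary fact used everywhere: for any $\rho(t)\in\mathbb{P}$ and any input obeying \eqref{eq:ctm_constraints}, every flow $\phi_e(t)$ produced by \eqref{eq:ctm_flow} is a finite, non-negative real number. It is finite and well defined because each turning rate in a denominator of \eqref{eq:ctm_flow_a} is strictly positive and because $\rho(t)\in\mathbb{P}$ keeps every $d_e$ and $s_e$ in its domain. Non-negativity is immediate for $e\in\N_U$ (where $\phi_e(t)=\de\ge0$) and for $e\in\N_S\cup\N_A$ (by \eqref{eq:ctm_constraints_demand}); for $e\in\mathcal{L}$ the only case requiring an argument is when $e$ is the mainline cell of an asymmetric junction, i.e.\ $\sigma_e\in\M_A$, in which case Assumption \ref{assumption:graph} forces the downstream cell $i$ to be unique and the sum $\sum_{j\in\N_A}\beta_{i,j}\phi_j(t)$ to collapse to a single onramp term $\beta_{i,r}\phi_r(t)$; combining $\phi_r(t)\le d_r(\rho_r(t))$ from \eqref{eq:ctm_constraints_demand} with $s_i(\rho_i(t))\ge\beta_{i,r}d_r(\rho_r(t))$ from Assumption \ref{assumption:merges} (available since $\rho(t)$ is reachable) makes the numerator of \eqref{eq:ctm_flow_a} non-negative. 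Granting this, the first assertion is clear: for a reachable $\rho(t)\in\mathbb{P}$ the input $\phi_e(t)=0$ for all $e\in\N_S\cup\N_A$ satisfies \eqref{eq:ctm_constraints}, since $\de\ge0$ and $\se\ge0$.

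For the second assertion I would bound $\rho_e(t+1)$ from both sides via \eqref{eq:ctm_density} and the bound on $\Delta t$ in Assumption \ref{assumption:fd}. Lower bound: all inflows and $w_e(t)$ are non-negative (the preliminary fact), so it suffices that $\phi_e(t)\le\frac{l_e}{\Delta t}\rho_e(t)$, which follows from $\phi_e(t)\le\de$ (by \eqref{eq:ctm_flow} or \eqref{eq:ctm_constraints_demand}) together with $\de=\de-d_e(0)\le\gamma_d\,\rho_e(t)\le\frac{l_e}{\Delta t}\rho_e(t)$, using Lipschitz continuity, $d_e(0)=0$ and $\Delta t\le l_e/\gamma_d$. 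Upper bound: if $\bar\rho_e=+\infty$ there is nothing to prove, which is exactly why Assumption \ref{assumption:merges} imposes infinite capacity on source cells and on cells downstream of sub-critical junctions --- the two cases where the inflow ($w_e$, respectively the demand-limited flow out of a sub-critical junction) is not a priori dominated by $\se$. For finite-capacity $e$ one has $w_e(t)=0$, and the crucial estimate $\sum_{i\in\E}\beta_{e,i}\phi_i(t)\le\se$ follows by cases on $\tau_e$: if $\tau_e\notin\M$, then $\tau_e$ has at most one in-edge $m$ (necessarily in $\mathcal{L}$), and the minimum defining $\phi_m(t)$ in \eqref{eq:ctm_flow_a} contains the term $\frac{\se}{\beta_{e,m}}$ (its $\N_A$-sum being void since $\tau_e\notin\M$), so $\beta_{e,m}\phi_m(t)\le\se$ (and the left side of the estimate is $0$ if $\tau_e$ has no in-edge); if $\tau_e\in\M_S$ the estimate is constraint \eqref{eq:ctm_constraints_supply} itself; if $\tau_e\in\M_A$, writing $m$ and $r$ for the mainline and onramp cells upstream of $\tau_e$, the minimum in \eqref{eq:ctm_flow_a} applied to $\phi_m(t)$ gives $\beta_{e,m}\phi_m(t)+\beta_{e,r}\phi_r(t)\le\se$; and $\tau_e\in\M_U$ cannot occur for finite-capacity $e$. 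Combining with $\phi_e(t)\ge0$ and $\se=\se-s_e(\bar\rho_e)\le\gamma_s(\bar\rho_e-\rho_e(t))\le\frac{l_e}{\Delta t}(\bar\rho_e-\rho_e(t))$ yields $\rho_e(t+1)\le\bar\rho_e$. Finally, since $\rho(0)\in\mathbb{P}$, the bounds just proved give, by induction on $t$, that every reachable state lies in $\mathbb{P}$; this closes the argument and justifies the standing hypothesis $\rho(t)\in\mathbb{P}$.

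The main obstacle I anticipate is not analytic but the graph bookkeeping in the upper-bound case split: one must invoke Assumption \ref{assumption:graph} ($\M\cap\D=\varnothing$, no self-loops, merging junctions not sinks) and the partition of $\M$ in Assumption \ref{assumption:merges} to be certain that the case distinction on $\tau_e$ is exhaustive, that the $\N_A$-sum in \eqref{eq:ctm_flow_a} reduces to one onramp term whenever it is nonzero, and that the mainline cell of an asymmetric junction sits in $\mathcal{L}$ and not in $\N_A$ --- which is precisely what makes \eqref{eq:ctm_flow_a} specialize to \eqref{eq:ctm_flows2}. Once these incidence relations are settled, turning the Lipschitz estimates for $d_e$ and $s_e$ into the one-step density bounds through the condition on $\Delta t$ is routine.
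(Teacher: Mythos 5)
Your proposal is correct and follows essentially the same route as the paper's proof: feasibility of the zero input from non-negativity of demand and supply, the lower bound $\rho_e(t+1)\ge 0$ from $\phi_e(t)\le \de\le \frac{l_e}{\Delta t}\rho_e(t)$ via Lipschitz continuity and $d_e(0)=0$, the upper bound from $\sum_i\beta_{e,i}\phi_i(t)\le \se\le \frac{l_e}{\Delta t}(\bar\rho_e-\rho_e(t))$, and the reachability restriction of Assumption \ref{assumption:merges} to handle mainline flows at asymmetric junctions. Your explicit case split on $\tau_e$ and the bookkeeping about which cells have infinite capacity simply spell out steps the paper treats more tersely; there is no substantive difference in approach.
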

The proof is presented in \ref{appendix:lemma_invariance}. To complete the problem description, it remains to introduce the control objective. A natural objective in traffic control is to minimize the total time spent (TTS) on the road
\begin{align*}
\TTS = \Delta t \cdot \sum_{t=1}^{T} \sum_{e \in \E} l_e \cdot \rho_e(t)
\end{align*}
over the horizon $t \in \{1,2,\dots,T\}$, that can represent a rush hour period or an entire day \cite{papageorgiou2003review,gomes2006optimal,como2016convexity}. The FNC problem for the CTM with controlled merging junctions is then defined as
\begin{equation} 
\label{eq:problem_original}
\begin{array}{rrl}
  \mathcal{P}_\text{CTM}^* = & \underset{\phi(t), \rho(t)}{\text{minimize}} & \TTS  \\ [2ex]
 & \text{subject to} & \text{CTM dynamics \eqref{eq:ctm_density} and \eqref{eq:ctm_flow}, and constraints \eqref{eq:ctm_constraints}} \\[1ex]
  & & \rho(0) ~\text{given.}
\end{array}
\end{equation}
The FNC problem assumes that predictions for the external traffic demands $w_e(t)$ are available for the optimization horizon $t \in \{0, \dots, T-1\}$. In practice, such predictions are highly uncertain and thus the control inputs computed by solving \eqref{eq:problem_original} should not be applied in open loop. This work focusses exclusively on the efficient solution of \eqref{eq:problem_original}, but it is emphasized that the control inputs obtained by solving the FNC problem should be applied in a receding horizon fashion to mitigate the effects of demand and model uncertainty. The FNC problem is non-convex, due to the nonlinear equality constraints describing the flows, but we aim to find a tight convex relaxation. To facilitate analysis, we will derive an equivalent system representation in the following section.

\section{Concave, state-monotone reformulation} \label{sec:CCTM} 

In this section, we introduce an equivalent system representation of the traffic model based on the \emph{cumulative flow}
\begin{equation}
\label{eq:cumulative_flow}
\Phi_e(t) := \Delta t \cdot \sum_{\tau = 0}^{t-1} \phi_e( \tau ) ~,
\end{equation}
that leaves a particular cell $e$ over the horizon $[0, t-1]$. The cumulative flows will serve as states, but certain flows are actively controlled. Therefore, we also introduce \emph{cumulative inputs} $\varphi_e(t) := \Delta t \cdot \sum_{\tau = 0}^{t} \phi_e( \tau )$ for controlled flows, that is, for $e \in \N_S \cup \N_A$. Note that the summation to compute $\varphi_e(t)$ is up to $\tau = t$ and therefore, the controlled flow $\phi_e(t) = \frac{1}{\Delta t} \cdot \big( \varphi_e(t) - \Phi_e(t) \big)$ can be computed given the cumulative flow $\Phi_e(t)$ and the cumulative input $\varphi_e(t)$. For ease of notation, we also introduce the cumulative, external inflow $ W_e(t) := \Delta t \cdot \sum_{\tau = 0}^{t-1} w_e( \tau ) $. Given a trajectory of cumulative flows, the corresponding densities and flows can be computed as
\begin{subequations}
\begin{align}
\phi_e(t) &= \frac{1}{\Delta t} \big( \Phi_e(t+1) - \Phi_e(t) \big) & \quad \forall e \in \E  ~, \\
\rho_e(t) &= \rho_e(0) + \frac{1}{l_e} \cdot \left( \sum_{i \in \E} \beta_{e,i} \Phi_{i}(t) ~ - \Phi_e(t) + W_e(t) \right) & \quad \forall e \in \E ~.\label{eq:density_transformation}
\end{align}
\label{eq:state_transformation}
\end{subequations}
It is helpful to introduce the negative cumulative flows $\hat \Phi_e(t) := - \Phi_e(t)$ for the controlled flows $e \in \N_S \cup \N_A$ as separate states. Our aim is to apply Theorem \ref{theorem:csm} to the FNC problem and the negative cumulative flows will be instrumental in defining a state-monotone reformulation of the CTM. The symbol $\Phi(t)$ is used to denote the vector composed of $\Phi_e(t)$ for all $e \in \E$ and $\hat \Phi_e(t)$ for $e \in \N_S \cup \N_A$. Before proceeding, we also introduce two auxiliary functions. First, the \emph{cumulative demand}
\begin{align*}
D_e \big( \Phi(t) \big) := \Phi_e(t) + \Delta t \cdot d_e \Big( \rho_e \big( \Phi(t) \big) \Big)
\end{align*}
is defined for all $e \in \E$, where $\rho_e \big( \Phi(t) \big)$ is computed according to the conservation law \eqref{eq:density_transformation}. Second, the \emph{cumulative supply}
\begin{align*}
S_{i,e} \big( \Phi(t) , \varphi(t) \big) &:= \Phi_e(t) + \frac{\Delta t}{\beta_{i,e}} \cdot \bigg( s_{i,e} \big( \Phi(t) \big) - \sum_{j \in \N_A} \beta_{i,j} \cdot \underbrace{ \frac{\varphi_j(t) - \Phi_j(t)}{\Delta t} }_{= \phi_j(t)} \bigg)
\end{align*}
is defined for any cell $e \notin \big( \N_S \cup \N_A \cup \N_U \big) $ and downstream cell $i$ such that $\beta_{i,e} \neq 0$, with
\begin{align*}
s_{i,e} \big( \Phi(t) \big) := s_i \Bigg(  \underbrace{ \rho_i(0) + \frac{1}{l_i} \cdot \bigg( \beta_{i,e} \Phi_e(t) + \sum_{j \in \N_A} \beta_{i,j} \Phi_{j}(t) ~ - \Phi_i(t) + W_i(t) \bigg) }_{= \rho_i(t)} \Bigg) .
\end{align*} 
In the latter equation, $s_i( \cdot )$ is simply the supply function of cell $i$ and its argument is equal to the density $\rho_i(t)$. 

\begin{lemma}
\label{lemma:CCTM}
Consider the CTM with controlled merging junctions and initial state $\rho(0)$.\footnote{The initial density is necessary to compute $\rho_e\big( \Phi(t) \big) = \rho_e(0) + \frac{1}{l_e} \cdot \left( \sum_{i \in \E} \beta_{e,i} \Phi_{i}(t) ~ - \Phi_e(t) + W_e(t) \right)$, which in turn is needed to evaluate $D_e\big( \Phi(t) \big)$. $\rho_e(0)$ is also necessary to evaluate $s_{i,e}(\Phi(t))$, which is used to define $S_{i,e} \big( \Phi(t), \varphi(t) \big)$.} The system evolution can equivalently be described using the system with state $\Phi(t)$ and input $\varphi_e(t)$ for $e \in \N_S \cup \N_A$. The state evolves as 
\begin{subequations}
\label{eq:csm_flow}
\begin{align}
& \Phi_e(t+1) = \min \left\{ D_e \big( \Phi(t) \big), \min_{i \in \E^+(e)} S_{i,e} \big( \Phi(t), \varphi(t) \big) \right\} &&\forall e \in \mathcal{L} , \label{eq:csm_flow_a} \\
& \Phi_e(t+1) = D_e \big( \Phi(t) \big) &&\forall e \in \N_U , \label{eq:csm_flow_b} \\[1ex]
& \Phi_e(t+1) = \varphi_e(t) &&\forall e \in \N_S \cup \N_A , \label{eq:csm_flow_c} \\[1ex]
& \hat \Phi_e(t+1) = -\varphi_e(t) &&\forall e \in \N_S \cup \N_A , \label{eq:csm_flow_d}
\end{align}
\end{subequations}
starting from the initial state $\Phi_e(0) = 0$ and $\hat \Phi_e(0) = 0$. The controlled flows are subject to demand and supply constraints 
\begin{subequations}
\label{eq:csm_constraints}
\begin{align}
-\hat \Phi_e(t) \leq \varphi_e(t) &\leq D_e \big( \Phi(t) \big) && \forall e \in \N_S \cup \N_A  , \label{eq:csm_constraints_demand} \\
\sum_{i \in \E} \beta_{e,i} \cdot \varphi_i(t) &\leq \Delta t \cdot s_e \Big( \rho_e \big( \Phi(t) \big) \Big) + \sum_{i \in \E} \beta_{e,i} \cdot \Phi_i(t) && \forall e : \tau_e \in \M_S . \label{eq:csm_constraints_supply} 
\end{align}
\end{subequations}
\end{lemma}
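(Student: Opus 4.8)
The plan is to verify that the proposed cumulative-flow system is equivalent to the CTM with controlled merging junctions of Definition \ref{definition:ctm}, by a change of variables together with an inductive argument. The core observation is already encoded in the substitution \eqref{eq:state_transformation}: given a trajectory $\rho(\cdot), \phi(\cdot)$ of the original system, one defines $\Phi_e(t)$ via \eqref{eq:cumulative_flow}, $\hat\Phi_e(t) := -\Phi_e(t)$ for $e \in \N_S \cup \N_A$, and $\varphi_e(t) := \Delta t \cdot \sum_{\tau=0}^t \phi_e(\tau)$ for controlled flows; conversely, given a trajectory $\Phi(\cdot), \varphi(\cdot)$ of the new system, one recovers $\rho_e(t)$ and $\phi_e(t)$ via \eqref{eq:state_transformation}. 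I would first check that these two maps are mutually inverse on the appropriate domains (using $\Phi_e(0) = 0$, $\hat\Phi_e(0)=0$, and $\rho(0)$ given), and that the density recovered from \eqref{eq:density_transformation} indeed satisfies the conservation law \eqref{eq:ctm_density}: differencing \eqref{eq:density_transformation} between $t+1$ and $t$ and using $\phi_e(t) = \tfrac{1}{\Delta t}(\Phi_e(t+1) - \Phi_e(t))$ reproduces \eqref{eq:ctm_density} exactly, so conservation is automatic.

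The substantive step is to show that the flow equations \eqref{eq:ctm_flow} and constraints \eqref{eq:ctm_constraints} translate term-by-term into \eqref{eq:csm_flow} and \eqref{eq:csm_constraints}. I would proceed cell-type by cell-type. For $e \in \N_U$, \eqref{eq:ctm_flow_b} says $\phi_e(t) = d_e(\rho_e(t))$; multiplying by $\Delta t$ and adding $\Phi_e(t)$ gives $\Phi_e(t+1) = \Phi_e(t) + \Delta t\, d_e(\rho_e(t)) = D_e(\Phi(t))$, which is \eqref{eq:csm_flow_b}, once one checks that the argument of $d_e$ in the definition of $D_e$ is precisely $\rho_e(t)$ as given by \eqref{eq:density_transformation} — this is true by construction. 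For $e \in \N_S \cup \N_A$, the controlled flow is $\phi_e(t) = \tfrac{1}{\Delta t}(\varphi_e(t) - \Phi_e(t))$ by the definition of the cumulative input, so $\Phi_e(t+1) = \Phi_e(t) + \Delta t\,\phi_e(t) = \varphi_e(t)$, which is \eqref{eq:csm_flow_c}, and negating gives \eqref{eq:csm_flow_d}; the demand constraint \eqref{eq:ctm_constraints_demand}, namely $0 \le \phi_e(t) \le d_e(\rho_e(t))$, becomes $0 \le \varphi_e(t) - \Phi_e(t) \le \Delta t\, d_e(\rho_e(t))$, i.e.\ $-\hat\Phi_e(t) = \Phi_e(t) \le \varphi_e(t) \le D_e(\Phi(t))$, which is \eqref{eq:csm_constraints_demand}. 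The supply constraint \eqref{eq:ctm_constraints_supply} transforms similarly: multiply by $\Delta t$, add $\sum_i \beta_{e,i}\Phi_i(t)$ to both sides, and note $\sum_i \beta_{e,i}(\Delta t\,\phi_i(t) + \Phi_i(t)) = \sum_i \beta_{e,i}\Phi_i(t+1) = \sum_i \beta_{e,i}\varphi_i(t)$ on the controlled side (recalling $\tau_e \in \M_S$ implies the merging cells upstream are in $\N_S$, so those flows are inputs), yielding \eqref{eq:csm_constraints_supply}.

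The delicate case is $e \in \mathcal{L}$. Here \eqref{eq:ctm_flow_a} gives $\phi_e(t) = \min\{d_e(\rho_e(t)),\, \min_{i\in\E^+(e)} \tfrac{1}{\beta_{i,e}}(s_i(\rho_i(t)) - \sum_{j\in\N_A}\beta_{i,j}\phi_j(t))\}$. Multiplying by $\Delta t$ and adding $\Phi_e(t)$, the first branch becomes $D_e(\Phi(t))$ as before, and the second branch, for each downstream $i$, becomes $\Phi_e(t) + \tfrac{\Delta t}{\beta_{i,e}}(s_i(\rho_i(t)) - \sum_{j\in\N_A}\beta_{i,j}\phi_j(t))$. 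I then need to check two things: first, that $\phi_j(t) = \tfrac{1}{\Delta t}(\varphi_j(t) - \Phi_j(t))$ is exactly the expression underbraced in the definition of $S_{i,e}$; second — and this is the only genuinely fiddly point — that $s_i$ is being evaluated at the correct density. The subtlety is that the density $\rho_i(t)$ entering the ACTM-style supply term depends on flows into $i$, but in the definition of $s_{i,e}(\Phi(t))$ the authors use $\beta_{i,e}\Phi_e(t) + \sum_{j\in\N_A}\beta_{i,j}\Phi_j(t)$ rather than $\sum_{k}\beta_{i,k}\Phi_k(t)$; I would argue this is consistent because a cell $i$ downstream of a cell $e \in \mathcal{L}$ at a merging junction $\sigma_e \in \M_A$ has exactly two incoming edges, the mainline $e$ and the onramp $j \in \N_A$, so the sum over incoming edges of $i$ is precisely $\beta_{i,e}\Phi_e + \beta_{i,j}\Phi_j$ — and when $\sigma_e \notin \M$, $\E^+$-relationships make the $\N_A$-sum empty or handled by the single-predecessor case, so the formula still gives $\rho_i(t)$. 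Matching the $\min$ over $i\in\E^+(e)$ with the $\min$ in \eqref{eq:csm_flow_a} then completes the argument. Finally I would invoke Lemma \ref{lemma:invariance} to confirm that the recovered trajectories stay in the valid domain $\mathbb{P}$ so that all demand/supply functions are evaluated where they are defined, and conclude that the state evolutions of the two systems coincide for every admissible input sequence and initial condition. The main obstacle is purely bookkeeping: keeping the graph-structural facts (each $\mathcal{L}$-cell at an asymmetric merge has exactly one onramp partner, cells at symmetric merges have their flows as inputs, $\M_S,\M_A,\M_U$ partition $\M$) straight so that every sum over incoming or outgoing edges collapses to the form appearing in the definitions of $D_e$, $S_{i,e}$, and $s_{i,e}$.
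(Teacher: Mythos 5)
Your proposal is correct and follows essentially the same route as the paper's proof: a direct change of variables, multiplying the flow equations and constraints by $\Delta t$, adding $\Phi_e(t)$ (respectively $\sum_{i \in \E} \beta_{e,i}\Phi_i(t)$ for the supply constraint), and substituting the transformation equations, with the reverse direction following from the definitions of $\varphi_e(t)$ and $\hat\Phi_e(t)$ and the conservation law from \eqref{eq:density_transformation}. The extra bookkeeping you supply --- in particular verifying that the argument of $s_i$ in $s_{i,e}$ equals $\rho_i(t)$ because an $\mathcal{L}$-cell at a merging junction can only sit at an asymmetric junction with a single onramp partner in $\N_A$ --- is a correct elaboration of a step the paper leaves implicit.
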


\begin{proof}
Multiplying the flow constraints \eqref{eq:ctm_flow_a}, \eqref{eq:ctm_flow_b} and the demand constraint \eqref{eq:ctm_constraints_demand} with $\Delta t$, then adding $\Phi_e(t)$ on both sides and substituting according to the transformation equations \eqref{eq:cumulative_flow} and \eqref{eq:state_transformation} yields the dynamics of the uncontrolled cumulative flows \eqref{eq:csm_flow_a}, \eqref{eq:csm_flow_b} and the cumulative demand constraint \eqref{eq:csm_constraints_demand}, respectively. Similarly, the supply constraint \eqref{eq:ctm_constraints_supply} can be transformed in the cumulative supply constraint \eqref{eq:csm_constraints_supply} by multiplying the former equation with $\Delta t$, then adding the term $\sum_{i \in \E} \beta_{e,i} \Phi_i(t)$ on both sides and performing the appropriate substitutions. Given a trajectory in cumulative flows, the conservation law \eqref{eq:ctm_density} follows directly from the transformation equation \eqref{eq:density_transformation} which defines the densities as functions of the cumulative flows. In the reverse direction, the system dynamics of the cumulative, controlled flows \eqref{eq:csm_flow_c} and \eqref{eq:csm_flow_d} follow directly from the definitions of the cumulative inputs $\varphi_e(t)$ and the negative cumulative flows $\hat \Phi_e(t)$.
\end{proof}

We refer to the system described by \eqref{eq:csm_flow} and \eqref{eq:csm_constraints} as the \emph{cumulative cell transmission model} (CCTM). Note that the negative controlled flows $\hat \Phi_e(t)$ have been used to define the lower bound in \eqref{eq:csm_constraints_demand}. In Lemma \ref{lemma:CCTM}, ``equivalent" means that any feasible trajectory $\big( \rho(t), \phi(t) \big)$ of the CTM with controlled merging junctions can be transformed into a feasible trajectory $\big( \Phi(t), \varphi(t) \big)$ of the CCTM (and vice versa).
The cumulative flows used in the definition of the CCTM are similar in spirit to the concept of cumulative arrivals and departures used in network calculus. Network calculus originated in the analysis of communication networks, but has also found application in the control of traffic networks. For example, \cite{varaiya2013max} considers a store-and-forward model which does not include congestion spillback effects and studies stabilization of network queues in the long term. More recently, a reformulation similar to the CCTM for the special case of a freeway segment with only onramp and off-ramp junctions was employed in \cite{schmitt2017sufficient} to derive optimality conditions for distributed, non-predictive ramp metering. 

It turns out that the CCTM is a concave, state-monotone system -- but only if the negative cumulative flows $\hat \Phi_e(t)$ are used instead of $-\Phi_e(t)$ to define the lower bound on $\varphi_e(t)$ in \eqref{eq:csm_constraints_demand}. In the following, we will analyze concavity and monotonicity of the equations defining the CCTM.
\begin{lemma} 
\label{lemma:demand}
For any $e \in \E$, the \emph{cumulative demand} $D_e \big(\Phi(t) \big)$
is concave and monotone in $\Phi(t)$.
\end{lemma}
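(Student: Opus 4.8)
The plan is to unwind the definition
\[
D_e\big(\Phi(t)\big) = \Phi_e(t) + \Delta t \cdot d_e\Big( \rho_e\big(\Phi(t)\big) \Big),
\]
where $\rho_e\big(\Phi(t)\big) = \rho_e(0) + \frac{1}{l_e}\big( \sum_{i \in \E} \beta_{e,i} \Phi_i(t) - \Phi_e(t) + W_e(t) \big)$ is an affine function of $\Phi(t)$. The first summand $\Phi_e(t)$ is linear in $\Phi(t)$, hence both concave and monotone (nondecreasing in the $e$-th coordinate, constant in all others). So it suffices to show that $\Phi(t) \mapsto d_e\big( \rho_e(\Phi(t)) \big)$ is concave and monotone, since concavity and monotonicity are preserved under addition and under scaling by the positive constant $\Delta t$.

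For concavity: $\rho_e(\cdot)$ is affine in $\Phi(t)$, and $d_e(\cdot)$ is concave by Assumption \ref{assumption:fd}; the composition of a concave function with an affine map is concave, so $d_e\big(\rho_e(\Phi(t))\big)$ is concave in $\Phi(t)$. For monotonicity (in the sense of Definition \ref{def:state_monotone}, i.e.\ nondecreasing elementwise in the state vector $\Phi(t)$): I would argue in two steps. First, $d_e$ is nondecreasing by Assumption \ref{assumption:fd}, so it suffices to check that $\rho_e\big(\Phi(t)\big)$ is nondecreasing in every component of $\Phi(t)$ — but this is \emph{false} for the component $\Phi_e(t)$ itself, since $\rho_e$ has the term $-\frac{1}{l_e}\Phi_e(t)$. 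This is the crux: the decrease of $\rho_e$ in $\Phi_e$ must be exactly compensated by the explicit $+\Phi_e(t)$ term that was added to form $D_e$. So the right way is to differentiate $D_e$ with respect to each coordinate of $\Phi(t)$ (using the Lipschitz/a.e.-differentiability of $d_e$, or a direct difference-quotient argument to stay fully rigorous): for $i \neq e$ with $\beta_{e,i} \neq 0$, $\partial D_e / \partial \Phi_i = \Delta t \cdot d_e'(\rho_e) \cdot \frac{\beta_{e,i}}{l_e} \geq 0$ since $d_e' \geq 0$ and $\beta_{e,i} \geq 0$; for coordinates $i$ with $\beta_{e,i} = 0$ (including the $\hat\Phi$-coordinates) the derivative is $0$; and for $i = e$, $\partial D_e/\partial \Phi_e = 1 + \Delta t \cdot d_e'(\rho_e)\cdot(-\frac{1}{l_e}) = 1 - \frac{\Delta t}{l_e} d_e'(\rho_e) \geq 1 - \frac{\Delta t}{l_e}\gamma_d \geq 0$, where the last inequality is precisely the sampling-time bound $\Delta t \leq \min_e\{l_e\}/\max\{\gamma_d,\gamma_s\}$ from Assumption \ref{assumption:fd}. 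Hence $D_e$ is nondecreasing in every coordinate of $\Phi(t)$, i.e.\ state-monotone.

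To be careful about differentiability, the cleanest rigorous route avoids derivatives: since $d_e$ is concave and $\gamma_d$-Lipschitz and nondecreasing, for any $a \leq b$ in its domain one has $0 \leq d_e(b) - d_e(a) \leq \gamma_d(b-a)$. Given $\Phi^{(1)} \geq \Phi^{(2)}$ (elementwise), write $\rho_e^{(k)} = \rho_e(\Phi^{(k)})$; then $D_e(\Phi^{(1)}) - D_e(\Phi^{(2)}) = (\Phi_e^{(1)} - \Phi_e^{(2)}) + \Delta t\,(d_e(\rho_e^{(1)}) - d_e(\rho_e^{(2)}))$. Split into the contribution of the $e$-th coordinate and the rest by a telescoping/monotone-in-each-coordinate argument: increasing $\Phi_i$ for $i \neq e$ only increases $\rho_e$, hence only increases $d_e(\rho_e)$, hence increases $D_e$; and for the $e$-th coordinate, increasing $\Phi_e$ by $\delta \geq 0$ decreases $\rho_e$ by $\delta/l_e$, changing $D_e$ by $\delta + \Delta t(d_e(\rho_e - \delta/l_e) - d_e(\rho_e)) \geq \delta - \Delta t \cdot \gamma_d \cdot \delta/l_e = \delta(1 - \Delta t\gamma_d/l_e) \geq 0$ by the sampling-time bound. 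Combining the two yields $D_e(\Phi^{(1)}) \geq D_e(\Phi^{(2)})$. I expect the main obstacle — and the only non-routine point — to be exactly this interplay in the $e$-th coordinate, where monotonicity hinges on the CFL-type condition on $\Delta t$; everything else (linearity of $\Phi_e$, affine-composition concavity, nonnegativity of $\beta_{e,i}$) is immediate from the stated assumptions.
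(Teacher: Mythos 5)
Your proposal is correct and matches the paper's own proof in both structure and substance: concavity via composition of the concave $d_e$ with the affine density map plus linearity of $\Phi_e(t)$, and state-monotonicity by using the nondecreasingness of $d_e$ to absorb the increases in the coordinates $\Phi_i$, $i \neq e$, and the $\gamma_d$-Lipschitz bound together with the sampling-time condition $\Delta t \leq l_e/\gamma_d$ to show the net effect of increasing $\Phi_e$ is nonnegative. The only cosmetic difference is that you telescope coordinate-by-coordinate while the paper writes the same two estimates as a single chain of inequalities on $D_e(\Phi+\Delta\Phi)-D_e(\Phi)$.
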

The proof is provided in \ref{appendix:lemma_demand}. A similar result holds for the cumulative supply.
\begin{lemma} 
\label{lemma:supply}
For any cell $e \notin \mathcal{L} $ and downstream cell $i$ such that $\beta_{i,e} \neq 0$, the \emph{cumulative supply} $S_{i,e} \big( \Phi(t), \varphi(t) \big)$
is jointly concave in $\Phi(t)$ and $\varphi(t)$ and state-monotone, that is, it is monotone in $\Phi(t)$.
\end{lemma}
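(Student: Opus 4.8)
The plan is to prove the two assertions — joint concavity and state-monotonicity — separately, in each case reducing to the elementary properties of $s_i$ recorded in Assumption~\ref{assumption:fd}. A few standing observations are worth isolating first: $S_{i,e}$ is only defined when $e \notin \N_S \cup \N_A \cup \N_U$ (so in particular $e \notin \N_A$) and $\beta_{i,e} \neq 0$; since nonzero turning rates lie in $(0,1]$ this gives $\beta_{i,e} \in (0,1]$, hence $\Delta t / \beta_{i,e} > 0$, a fact used repeatedly, and the same hypotheses prevent $i$ from being a source cell or a cell downstream of a sub-critical junction, so $s_i$ is a genuine finite-valued function obeying Assumption~\ref{assumption:fd}. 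For concavity, the key point is that the argument of $s_i(\cdot)$ inside $s_{i,e}\big(\Phi(t)\big)$ is exactly the density $\rho_i(t)$, which by \eqref{eq:density_transformation} is affine in $\Phi(t)$; since $s_i$ is concave and concavity is preserved under precomposition with an affine map, $s_{i,e}\big(\Phi(t)\big)$ is concave in $\Phi(t)$. Rewriting
\[
S_{i,e}\big(\Phi(t),\varphi(t)\big) = \Phi_e(t) + \frac{\Delta t}{\beta_{i,e}}\, s_{i,e}\big(\Phi(t)\big) - \frac{1}{\beta_{i,e}}\sum_{j \in \N_A} \beta_{i,j}\big(\varphi_j(t) - \Phi_j(t)\big),
\]
the middle term is a nonnegative multiple of a concave function of $\Phi(t)$, hence concave, while the remaining terms are affine in $\big(\Phi(t),\varphi(t)\big)$; a sum of a concave and an affine function is jointly concave, which proves the first claim.

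For state-monotonicity I would first observe that $S_{i,e}$ depends on the state only through $\Phi_e(t)$, $\Phi_i(t)$ and $\Phi_j(t)$ for those $j \in \N_A$ with $\beta_{i,j} \neq 0$, and not at all on the $\hat\Phi$ components. By Assumption~\ref{assumption:graph} (no self-loops), together with $e \notin \N_A$ and $\sigma_e = \tau_i$, these indices are pairwise distinct, so it suffices to check monotonicity in each of them separately and then pass from $\Phi^{(2)}$ to $\Phi^{(1)} \geq \Phi^{(2)}$ one coordinate at a time. Monotonicity in $\Phi_i(t)$ is immediate: it enters only through $\rho_i(t)$ with coefficient $-1/l_i$ and $s_i$ is nonincreasing, so $s_{i,e}$, and hence $S_{i,e}$ via the positive factor $\Delta t/\beta_{i,e}$, is nondecreasing in $\Phi_i(t)$.

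The crux is the $\Phi_e(t)$ coordinate (and, identically, the $\Phi_j(t)$ coordinates), where the explicit linear term increases while the supply term decreases, so a cancellation has to be controlled. For $\Phi_e^{(1)} \geq \Phi_e^{(2)}$ with all other coordinates fixed, the induced densities satisfy $\rho_i^{(1)} - \rho_i^{(2)} = \frac{\beta_{i,e}}{l_i}\big(\Phi_e^{(1)} - \Phi_e^{(2)}\big) \geq 0$, whence Lipschitz continuity and monotonicity of $s_i$ give $s_i\big(\rho_i^{(1)}\big) - s_i\big(\rho_i^{(2)}\big) \geq -\gamma_s \frac{\beta_{i,e}}{l_i}\big(\Phi_e^{(1)} - \Phi_e^{(2)}\big)$, so that
\[
S_{i,e}^{(1)} - S_{i,e}^{(2)} = \big(\Phi_e^{(1)} - \Phi_e^{(2)}\big) + \frac{\Delta t}{\beta_{i,e}}\Big(s_i\big(\rho_i^{(1)}\big) - s_i\big(\rho_i^{(2)}\big)\Big) \geq \Big(1 - \frac{\gamma_s\,\Delta t}{l_i}\Big)\big(\Phi_e^{(1)} - \Phi_e^{(2)}\big) \geq 0,
\]
where the last step uses the sampling-time bound $\Delta t \leq \min_{e} l_e / \max\{\gamma_d,\gamma_s\} \leq l_i/\gamma_s$ from Assumption~\ref{assumption:fd}. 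The same computation handles each $\Phi_j(t)$, $j \in \N_A$: the term $+\frac{\beta_{i,j}}{\beta_{i,e}}\Phi_j(t)$ coming from the subtracted onramp flow plays the role of the explicit linear term and again dominates the Lipschitz-controlled decrease of $s_{i,e}$ under the identical bound. Assembling the three cases yields $S_{i,e}\big(\Phi^{(1)},\varphi\big) \geq S_{i,e}\big(\Phi^{(2)},\varphi\big)$ for every $\varphi$, i.e.\ state-monotonicity.

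I expect the only genuine obstacle to be the bookkeeping in the monotonicity argument: pinning down exactly which coordinates of $\Phi(t)$ enter $S_{i,e}$ and with which signs, and checking that the compensating linear terms together with the Lipschitz slope of $s_i$ precisely exhaust the slack furnished by the CFL-type condition on $\Delta t$ — which is, of course, exactly the place where Assumption~\ref{assumption:fd} (rather than just monotonicity and concavity of $s_i$) is essential. The concavity argument and the monotonicity in $\Phi_i$ are routine.
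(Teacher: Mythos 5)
Your proposal is correct and follows essentially the same route as the paper's proof: concavity via precomposition of the concave supply $s_i$ with the affine density map plus affine terms, and state-monotonicity via the interplay of the nonincreasing property of $s_i$, its Lipschitz constant $\gamma_s$, and the sampling-time bound $\Delta t \leq l_i/\gamma_s$ from Assumption \ref{assumption:fd}. The only difference is presentational: you verify monotonicity coordinate-by-coordinate (in $\Phi_i$, $\Phi_e$, and each $\Phi_j$, $j \in \N_A$), whereas the paper carries out a single aggregate estimate for an arbitrary increment $\Delta \Phi \geq 0$, with the same cancellations appearing in both.
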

The proof is provided in \ref{appendix:lemma_supply}. We now use the preceding lemmas to analyze the system dynamics of the CCTM.
\begin{lemma} 
\label{lemma:csm_dynamics}
The CCTM dynamics \eqref{eq:csm_flow} are concave and state-monotone.
\end{lemma}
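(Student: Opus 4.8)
The plan is to verify the two defining properties of Definition~\ref{def:csm} one coordinate at a time, exploiting that for a vector-valued map both joint concavity and state-monotonicity are, under the elementwise reading of all inequalities, equivalent to the corresponding scalar property holding for each component. The right-hand side of \eqref{eq:csm_flow} has exactly four types of scalar components: \eqref{eq:csm_flow_a} for $e\in\mathcal{L}$, \eqref{eq:csm_flow_b} for $e\in\N_U$, and the pair \eqref{eq:csm_flow_c}--\eqref{eq:csm_flow_d} for $e\in\N_S\cup\N_A$. It therefore suffices to treat these cases and then stack.

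Cases \eqref{eq:csm_flow_b}, \eqref{eq:csm_flow_c} and \eqref{eq:csm_flow_d} are immediate: $\Phi_e(t+1)=D_e(\Phi(t))$ does not involve $\varphi(t)$, so joint concavity in $(\Phi,\varphi)$ and state-monotonicity follow verbatim from Lemma~\ref{lemma:demand}; and $\varphi_e(t)$ and $-\varphi_e(t)$ are affine (hence concave) and independent of the state (hence trivially state-monotone, the two sides being equal whenever $\Phi_1\ge\Phi_2$). For \eqref{eq:csm_flow_a} I would combine Lemmas~\ref{lemma:demand} and \ref{lemma:supply} with two standard facts: a pointwise minimum of jointly concave functions is jointly concave, and a pointwise minimum of state-monotone functions is state-monotone (if $f_k(\Phi_1)\ge f_k(\Phi_2)$ for every $k$, then $\min_k f_k(\Phi_1)\ge\min_j f_j(\Phi_2)$). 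By Lemma~\ref{lemma:demand}, $D_e$ is concave and monotone, and jointly concave in $(\Phi,\varphi)$ since it ignores $\varphi$; by Lemma~\ref{lemma:supply}, each $S_{i,e}(\Phi,\varphi)$ with $i\in\E^+(e)$ is jointly concave and state-monotone; hence the minimum over the finite family $\{D_e\}\cup\{S_{i,e}:i\in\E^+(e)\}$ inherits both properties. The degenerate subcase $\E^+(e)=\varnothing$ (a cell of $\mathcal{L}$ leading into a sink) collapses to $\Phi_e(t+1)=D_e(\Phi(t))$ and is covered above. Stacking all components, together with the observation that the relevant state set and the input box in \eqref{eq:csm_constraints_demand} are closed and convex, yields the claim.

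I do not anticipate a real obstacle here: once Lemmas~\ref{lemma:demand} and \ref{lemma:supply} are in hand the argument is pure bookkeeping. The two points that need a moment of care are that $D_e$ and $S_{i,e}$ depend on $\Phi(t)$ only through the genuine cumulative-flow coordinates and are constant in the $\hat\Phi$-coordinates, so monotonicity with respect to the full state vector is not jeopardised; and that although $S_{i,e}$ decreases in the asymmetric-merge inputs $\varphi_j(t)$, $j\in\N_A$ (through the term $-\phi_j(t)$), this concerns an \emph{input} rather than a state, so state-monotonicity is unaffected, while joint concavity still holds because $S_{i,e}$ is affine in those inputs. The genuine content of the lemma thus resides in Lemmas~\ref{lemma:demand} and \ref{lemma:supply}, which compose the concave, nondecreasing demand functions and the concave, nonincreasing supply functions with the affine conservation law \eqref{eq:density_transformation}.
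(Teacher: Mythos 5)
Your proof is correct and follows essentially the same route as the paper's: decompose the dynamics into the cases \eqref{eq:csm_flow_a}--\eqref{eq:csm_flow_d}, invoke Lemmas~\ref{lemma:demand} and \ref{lemma:supply}, use that a pointwise minimum of finitely many concave, state-monotone functions is again concave and state-monotone, and note that the controlled-flow equations are linear in the input and state-independent. The extra remarks on the sink case and the $\hat\Phi$-coordinates are fine but not needed beyond what the paper already argues.
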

\begin{proof} 
The state evolution for cells $e \in \mathcal{L}$ is given as the point-wise minimum
\begin{align*}
\Phi_e(t+1) &= \min \left\{ D_e \big( \Phi(t) \big), \min_{i \in \E^+(e)} S_{i,e} \big( \Phi(t), \varphi(t) \big) \right\}
\end{align*}
over the cumulative demand $D_e \big( \Phi(t) \big)$ and the cumulative supply $S_{i,e} \big( \Phi(t), \varphi(t) \big)$. Concavity and state monotonicity of cumulative demand and supply have been shown in Lemmas \ref{lemma:demand} and \ref{lemma:supply}, respectively. Taking the point-wise minimum over finitely many functions preserves both concavity \cite{boyd2004convex} and monotonicity of the arguments. Hence, the system dynamics are concave and state-monotone. For cells $e \in \N_U$ upstream of sub-critical junctions, the cumulative flow $\Phi_e(t+1) = D_e \big( \Phi(t) \big)$ is equal to the cumulative demand and is thus concave and state-monotone. The system equations for $\Phi_e(t+1)$ and $\hat \Phi_e(t+1)$ for $e \in \N_S \cup \N_A$ are linear in the input and do not depend on the state. Therefore, they are also concave and state-monotone.
\end{proof} 
The limitations of introducing additional, negative states $\hat \Phi_e(t)$ to retain state monotonicity of the lower bound on the controlled flows \eqref{eq:csm_constraints_demand} become apparent in the previous analysis of the system dynamics: both the right-hand sides (RHS) of \eqref{eq:csm_flow_c} and \eqref{eq:csm_flow_d} need to be concave and state-monotone, even though the RHS of \eqref{eq:csm_flow_d} is the negative of the RHS of \eqref{eq:csm_flow_c}. In the particular case of the CCTM, this is possible since the RHS is linear in the inputs and does not depend on the system states.
\begin{lemma} 
\label{lemma:csm_constraints}
The CCTM constraints \eqref{eq:csm_constraints} are concave and state-monotone.
\end{lemma}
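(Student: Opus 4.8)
The plan is to treat the two families of constraints in \eqref{eq:csm_constraints} separately, in each case rewriting the constraint in the form $h(\Phi(t),\varphi(t)) \ge 0$ and checking that $h$ is concave and state-monotone, since Definition \ref{def:csm} and the surrounding discussion require exactly that the constraint functions $g_t$ be concave and state-monotone. For the demand constraints \eqref{eq:csm_constraints_demand}, I would split the two-sided inequality into the lower bound $\varphi_e(t) + \hat\Phi_e(t) \ge 0$ and the upper bound $D_e(\Phi(t)) - \varphi_e(t) \ge 0$. The lower bound function is linear in the variables $\varphi_e$ and $\hat\Phi_e$, hence concave; and it is state-monotone because it is nondecreasing in the state $\hat\Phi_e$ and independent of all other states. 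The upper bound function is $D_e(\Phi(t))$ (concave and monotone in $\Phi$ by Lemma \ref{lemma:demand}) minus the linear term $\varphi_e(t)$ (which does not involve the state), so it is concave in $(\Phi,\varphi)$ and monotone in $\Phi$. This is precisely where the device of carrying $\hat\Phi_e(t) = -\Phi_e(t)$ as an extra state pays off: writing the lower bound as $-\Phi_e(t) \le \varphi_e(t)$ directly would give the constraint function $\varphi_e(t) + \Phi_e(t)$, which is \emph{decreasing} in no state but would couple to $\Phi_e$; more importantly, it is the companion dynamics \eqref{eq:csm_flow_d} and this lower bound together that force the use of $\hat\Phi_e$, and I would remark on this parallel with the discussion following Lemma \ref{lemma:csm_dynamics}.

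For the supply constraints \eqref{eq:csm_constraints_supply} at a symmetric junction with $\tau_e \in \M_S$, I would move everything to one side and define
\[
h_e\big(\Phi(t),\varphi(t)\big) := \Delta t \cdot s_e\Big(\rho_e\big(\Phi(t)\big)\Big) + \sum_{i \in \E} \beta_{e,i}\,\Phi_i(t) - \sum_{i \in \E} \beta_{e,i}\,\varphi_i(t).
\]
The third term is linear in the inputs $\varphi_i$ and does not involve the state, so it is concave and (vacuously) state-monotone. The second term $\sum_i \beta_{e,i}\Phi_i(t)$ is linear in the state with nonnegative coefficients $\beta_{e,i} \ge 0$, hence concave and nondecreasing in $\Phi$. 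The only substantive term is $\Delta t \cdot s_e(\rho_e(\Phi(t)))$, and I would handle it exactly as in the proof of Lemma \ref{lemma:demand}: $\rho_e(\Phi(t))$ is the affine map \eqref{eq:density_transformation} of $\Phi(t)$, and $s_e$ is concave and \emph{nonincreasing} by Assumption \ref{assumption:fd}, so $s_e \circ \rho_e$ is concave but \emph{anti}-monotone in $\Phi$ on its own. The key observation — again mirroring the cancellation in the cumulative-demand argument — is that $s_e(\rho_e(\Phi))$ depends on $\Phi$ only through the combination $\sum_{i}\beta_{e,i}\Phi_i(t) - \Phi_e(t) + W_e(t)$ appearing in \eqref{eq:density_transformation}, and the troublesome $-\Phi_e(t)$ piece has coefficient $-1/l_e$ inside $\rho_e$, composed with a nonincreasing function, which makes $s_e(\rho_e(\cdot))$ nondecreasing in $\Phi_e$; meanwhile the $+\sum_i\beta_{e,i}\Phi_i(t)$ terms enter $\rho_e$ with positive sign and are cancelled, as far as monotonicity is concerned, by the bare $+\sum_i\beta_{e,i}\Phi_i(t)$ term already present in $h_e$. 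I would make this precise by grouping $h_e$ so that the dependence on each $\Phi_i$, $i \ne e$, is of the form (bare $+\beta_{e,i}\Phi_i$) plus ($\Delta t\, s_e$ evaluated at something increasing in $-\beta_{e,i}\Phi_i/l_i$), and note that the sum is nondecreasing — this is the same bookkeeping already carried out for $D_e$ in \ref{appendix:lemma_demand}, so I would simply invoke that proof's structure rather than repeat it.

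Assembling the pieces, $h_e$ is a sum of concave, state-monotone functions and hence concave and state-monotone, which together with the analysis of the demand constraints establishes that all constraint functions in \eqref{eq:csm_constraints} are concave and state-monotone. The main obstacle is not any hard inequality but keeping the monotonicity bookkeeping straight for the supply constraint: the composition $s_e \circ \rho_e$ is monotone in the "wrong" direction in the off-diagonal state variables, and one has to see that the extra additive $\sum_i \beta_{e,i}\Phi_i(t)$ term in the cumulative supply constraint — exactly the term that was added to both sides when deriving \eqref{eq:csm_constraints_supply} from \eqref{eq:ctm_constraints_supply} in the proof of Lemma \ref{lemma:CCTM} — is precisely what restores monotonicity. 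I expect the cleanest write-up is to state the demand-constraint part in two lines and then, for the supply constraint, reduce to the already-proven monotonicity of $D_e$ (Lemma \ref{lemma:demand}) by observing that $h_e(\Phi,\varphi) + \sum_i\beta_{e,i}\varphi_i = \Delta t\, s_e(\rho_e(\Phi)) + \sum_i \beta_{e,i}\Phi_i$ has exactly the same state-dependence structure as a cumulative demand term, up to replacing a nondecreasing $d_e$ by a nonincreasing $s_e$ and correspondingly flipping the sign of the affine argument.
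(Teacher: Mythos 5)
Your proposal is correct and follows essentially the same route as the paper: split \eqref{eq:csm_constraints} into the lower bound $\varphi_e(t)+\hat\Phi_e(t)\ge 0$ (affine, monotone in $\hat\Phi_e$), the demand bound $D_e(\Phi(t))-\varphi_e(t)\ge 0$ (Lemma \ref{lemma:demand}), and the supply constraint, whose concavity follows from composing the concave $s_e$ with the affine map \eqref{eq:density_transformation} and whose state monotonicity follows from the same Lipschitz bookkeeping (using $\Delta t\le l_e/\gamma_s$ from Assumption \ref{assumption:fd}) that the paper delegates to the reasoning of Lemma \ref{lemma:supply} in \ref{appendix:proof_constraint_supply}, while you equivalently reduce it to the structure of Lemma \ref{lemma:demand}. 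One small aside is off (the un-hatted form of the lower bound is $\varphi_e(t)-\Phi_e(t)\ge 0$, which is anti-monotone in $\Phi_e$ --- precisely why $\hat\Phi_e$ is introduced --- and the density argument involves $l_e$, not $l_i$), but this does not affect the validity of your proof.
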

\begin{proof} 
To analyze monotonicity of the constraints, we will transform all constraints into the form $g_t(x(t),u(t)) \geq 0$ to match the conventions used in Theorem \ref{theorem:csm}. The left-hand side (LHS) of the constraint preventing negative controlled flows, $\varphi_e(t) + \hat \Phi_e(t) \geq 0$ for all $e \in \N_S \cup \N_A$,
is affine and hence concave. In addition, it is monotone in $\hat \Phi_e(t)$ and hence state-monotone. The LHS of the demand constraints, $D_e \big( \Phi(t) \big) - \varphi_e(t) \geq 0$ for all $e \in \N_S \cup \N_A$,
is concave and state-monotone, since it is the sum of the cumulative demand $D_e \big( \Phi(t) \big)$, which is concave and state-monotone (Lemma \ref{lemma:demand}), and the negative of the controlled cumulative flow, which is also concave and state-monotone (as it is independent of the state) as well. For the supply constraints
\begin{align*}
\Delta t \cdot s_e \Big( \rho_e \big( \Phi(t) \big) \Big) + \sum_{i \in \E} \beta_{e,i} \cdot \big( \Phi_i(t) - \varphi_e(t) \big) \geq 0 &\quad \forall e : \tau_e \in \M_S,
\end{align*}
we first focus on concavity. The density $\rho_e\big( \Phi(t) \big)$ is an affine function of $\Phi(t)$ according to the conservation law \eqref{eq:state_transformation}. The supply function $s_e( \cdot ) $ is concave in the density by Assumption \ref{assumption:fd} and hence, is concave in $\Phi(t)$. Concavity of the LHS follows since it is sum of concave and affine functions. State monotonicity of the LHS can be verified using similar reasoning as in the proof of Lemma \ref{lemma:supply}. This is detailed in \ref{appendix:proof_constraint_supply}.
\end{proof} 

It remains to express the objective function in terms of cumulative states. In the CCTM, we consider the maximization of linear, stage-wise objective functions\footnote{It is straightforward to extend the results to concave, state-monotone objective functions that are not necessarily defined stage-wise. However, relevant objective functions for the FNC problem, such as TTS, can be expressed as stage-wise, linear functions. We restrict our attention to this case for simplicity of exposition.} of the form
\begin{align}
\label{eq:csm_objective}
c \big( \Phi(t) \big) = \sum_{e \in \E} c_e \cdot \Phi_e(t),
\end{align}
with $c_e \geq 0$. This objective function is linear in states and inputs, and hence concave. Also, $\frac{\partial}{\partial \Phi_e} c ( \Phi(t)) = c_e \geq 0$ implies that it is state-monotone. The choice $\hat c_e := 1 - \sum_{i \in \E} \beta_{i,e} \geq 0$ makes the stage-wise objective \eqref{eq:csm_objective} equal to the total discharge flow that leaves the network in time step $t$. One can use the objective function \eqref{eq:csm_objective} to encode minimization of TTS. We can verify that
\begin{align*}
\TTS &= \sum_{t=1}^{T} \sum_{e \in \E} l_e \cdot \rho_e(t) \\
 &= \sum_{t=1}^{T} \sum_{e \in \E} \left( l_e \cdot \rho_v(0) - \left(\Phi_e(t) - \sum_{j \in \E} \beta_{e,j} \Phi_{j}(t) \right) ~ + W_e(t)  \right) \\
 &= \sum_{t=1}^{T} \sum_{e \in \E} \left( l_e \cdot \rho_v(0) - \left(\Phi_e(t) - \sum_{i \in \E} \beta_{i,e} \Phi_{e}(t) \right) ~ + W_e(t)  \right) \\
 &= \underbrace{\sum_{t=1}^{T} \sum_{e \in \E} \left(  l_e \cdot \rho_v(0) + W_e(t) \right)}_{ := C_{\text{W}}} - \sum_{t=1}^{T} \sum_{e \in \E} \hat c_e \Phi_e(t)
\end{align*}
where the equality in the third line follows from rearranging flows within the network in the summation over $e \in \E$. Note that $C_{\text{W}}$ is a constant that does not affect the set of minimizers. Hence, maximization of the stage-wise discharge is equivalent to minimizing TTS over the whole horizon, in the sense that the set of optimizers is identical. The relationship between total discharge flows and TTS has already been pointed out in \cite{papageorgiou2003review,gomes2006optimal}, while concavity and monotonicity of the TTS objective in the density state is discussed in \cite{como2016convexity}.

We are now ready to state our main result:
\begin{theorem} 
\label{theorem:CCTM}
Consider a traffic network modeled by the CTM defined on a graph satisfying Assumption \ref{assumption:graph}, with fundamental diagrams satisfying Assumption \ref{assumption:fd} and with merging junctions satisfying Assumption \ref{assumption:merges}. The FNC problem for the CTM with controlled merging junctions \eqref{eq:problem_original} is equivalent to the convex, \emph{relaxed FNC problem}
\begin{equation} 
\label{eq:problem_relaxed}
\begin{array}{rrll}
  \mathcal{R}_\text{CTM}^* = & \underset{\phi(t), \rho(t)}{\text{minimize}} & \Delta t \cdot \sum_{t = 1}^{T} \sum_{e \in \E} l_e \cdot \rho_e(t) \\ [2ex]
 & \text{subject to} & \rho_e(t+1) = \rho_e(t) + \frac{\Delta t}{l_e} \cdot \left( \sum_{i \in \E} \beta_{e,i} \phi_i(t) ~ - \phi_e(t) + w_e(t) \right) \quad & \forall e \in \E \\ [1ex]
 & & \phi_e(t) \leq \de \quad & \forall e \in \E \\ [1ex]
 & & \sum_{i \in \E} \beta_{e,i} \cdot \phi_i(t) \leq \se & \forall e \in \E \\[2ex]
 & & \phi_e(t) \geq 0 & \forall e \in \N_S \cup \N_A \\ [1ex]
  & & \rho(0) ~\text{given,}
\end{array} 
\end{equation}
in the sense that the objective values are equal and any optimizer of \eqref{eq:problem_relaxed} can be used to compute a solution of the original problem \eqref{eq:problem_original}.
\end{theorem}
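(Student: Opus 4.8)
The plan is to recast problems \eqref{eq:problem_original} and \eqref{eq:problem_relaxed} in the cumulative-flow coordinates of the CCTM, where Theorem \ref{theorem:csm} applies verbatim, and then translate the conclusion back. By Lemma \ref{lemma:CCTM} the CTM with controlled merging junctions and the CCTM describe the same trajectories, so \eqref{eq:problem_original} can be rewritten as an optimal control problem with CCTM state $\Phi(t)$ and input $\varphi_e(t)$ for $e\in\N_S\cup\N_A$; by Lemmas \ref{lemma:csm_dynamics} and \ref{lemma:csm_constraints} its dynamics \eqref{eq:csm_flow} and constraints \eqref{eq:csm_constraints} are concave and state-monotone; and by the computation of $\TTS$ in terms of cumulative flows given above, minimizing $\TTS$ is the same as maximizing the linear, concave, state-monotone stage cost $c(\Phi(t))=\sum_{e\in\E}\hat c_e\Phi_e(t)$ with $\hat c_e=1-\sum_{i\in\E}\beta_{i,e}\ge 0$, up to the additive constant $C_{\text{W}}$ which does not change the set of optimizers. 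Hence the CCTM form of \eqref{eq:problem_original} is an instance of \eqref{eq:finitehorizon}, and Theorem \ref{theorem:csm} yields that its convex relaxation---obtained by replacing each equality in \eqref{eq:csm_flow} by ``$\le$''---has the same optimal value, the proof of that theorem supplying the explicit recipe of keeping the relaxed optimizer's inputs $\varphi$ and rolling the CCTM dynamics forward from $\Phi(0)=0$, $\hat\Phi(0)=0$.

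Next I would match this CCTM relaxation with \eqref{eq:problem_relaxed} via the affine transformation \eqref{eq:state_transformation}, under which $\Phi_e(t+1)-\Phi_e(t)=\Delta t\,\phi_e(t)$ and $\rho_e(t)$ is the affine function \eqref{eq:density_transformation} of $\Phi(t)$. With these substitutions the relaxed dynamics $\Phi_e(t+1)\le D_e(\Phi(t))$ and the relaxed bound $\varphi_e(t)\le D_e(\Phi(t))$ become $\phi_e(t)\le\de$; the relaxed dynamics $\Phi_e(t+1)\le S_{i,e}(\Phi(t),\varphi(t))$ become $\beta_{i,e}\phi_e(t)+\sum_{j\in\N_A}\beta_{i,j}\phi_j(t)\le s_i(\rho_i(t))$, which---because for $e\in\mathcal L$ and $i\in\E^+(e)$ the tail $\tau_i$ is either not a merging junction or an asymmetric junction, so that $e$ together with the onramp cells feeding $\tau_i$ exhaust all cells feeding $i$---is exactly $\sum_{k\in\E}\beta_{i,k}\phi_k(t)\le s_i(\rho_i(t))$; the relaxed supply constraint \eqref{eq:csm_constraints_supply} does the same for cells downstream of symmetric junctions; and the density conservation law holds with equality since it is just \eqref{eq:density_transformation} differenced in time. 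The sign constraint $\phi_e(t)\ge 0$ for $e\in\N_S\cup\N_A$ in \eqref{eq:problem_relaxed} is where the auxiliary states $\hat\Phi_e$ enter: given $(\rho^\circ,\phi^\circ)$ feasible in \eqref{eq:problem_relaxed}, one has $\phi^\circ\ge 0$ on controlled cells, so setting $\Phi_e(t)=\Delta t\sum_{\tau<t}\phi_e^\circ(\tau)$, $\varphi_e(t)=\Phi_e(t+1)$ and $\hat\Phi_e(t)=-\Phi_e(t)$ makes \eqref{eq:csm_flow_c}, \eqref{eq:csm_flow_d} and \eqref{eq:csm_constraints_demand} hold, i.e.\ this lift lands inside the CCTM relaxation with matching objective.

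Putting the pieces together gives both inequalities of the equivalence. That \eqref{eq:problem_relaxed} is a relaxation of \eqref{eq:problem_original}---so $\mathcal{R}_\text{CTM}^*\le\mathcal{P}_\text{CTM}^*$---is immediate, since the flow equations \eqref{eq:ctm_flow} and actuation constraints \eqref{eq:ctm_constraints} of \eqref{eq:problem_original} imply the demand and supply inequalities and the sign constraints of \eqref{eq:problem_relaxed}. For the reverse direction I would take an optimizer $(\rho^\circ,\phi^\circ)$ of \eqref{eq:problem_relaxed}, lift it to a feasible point of the CCTM relaxation as above, apply the forward rollout from the proof of Theorem \ref{theorem:csm} to obtain a feasible point of the (non-relaxed) CCTM whose discharge objective is at least that of the lift, and map it back via Lemma \ref{lemma:CCTM} to a feasible point of \eqref{eq:problem_original} whose $\TTS$ is therefore no larger than $\TTS(\rho^\circ,\phi^\circ)=\mathcal{R}_\text{CTM}^*$; hence $\mathcal{P}_\text{CTM}^*\le\mathcal{R}_\text{CTM}^*$, equality holds, and the point just constructed is the desired optimizer of \eqref{eq:problem_original}.

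The conceptual heart---concavity plus state-monotonicity forces an exact relaxation---is entirely delegated to Theorem \ref{theorem:csm} and Lemmas \ref{lemma:CCTM}--\ref{lemma:csm_constraints}, so the main obstacle I anticipate is the bookkeeping of the preceding two paragraphs: checking that relaxing the pointwise-minimum dynamics \eqref{eq:csm_flow_a} edge by edge reproduces precisely the per-cell demand and supply inequalities of \eqref{eq:problem_relaxed} (in particular that the $\N_A$-terms inside $S_{i,e}$ line up with the full inflow sum $\sum_{k}\beta_{i,k}\phi_k$, and that no spurious inequalities survive at cells of infinite capacity), together with the care needed for the auxiliary states $\hat\Phi_e$---whose relaxed dynamics are the negatives of those of the controlled $\Phi_e$, yet which must still be concave and state-monotone---so that $\phi_e\ge 0$ is preserved both under the relaxation and under the forward rollout.
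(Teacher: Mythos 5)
Your proposal is correct and follows essentially the same route as the paper: pass to the CCTM via Lemma \ref{lemma:CCTM}, invoke Lemmas \ref{lemma:csm_dynamics} and \ref{lemma:csm_constraints} together with the discharge-flow form of $\TTS$ so that Theorem \ref{theorem:csm} gives exactness, and identify the relaxed CCTM problem with \eqref{eq:problem_relaxed} by the affine change of variables before sandwiching $\mathcal{R}_\text{CTM}^*$ and $\mathcal{P}_\text{CTM}^*$. The constraint bookkeeping you sketch (the $\N_A$-terms completing the inflow sum, and infinite-capacity cells absorbing the remaining supply inequalities) is exactly what the paper carries out in its appendix on equivalence of the relaxed problems.
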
 
\begin{proof} 
Consider an optimizer $\big( \tilde \rho(t), ~\tilde \phi(t) \big)$ of the relaxed FNC problem \eqref{eq:problem_relaxed}. The corresponding cumulative flows $\tilde \Phi(t)$ and inputs $\tilde \varphi(t)$, computed according to the definition \eqref{eq:cumulative_flow}, are an optimizer of the equivalent optimization problem 
\begin{equation} 
\label{eq:problem_cctm_relaxed}
\begin{array}{rrlr}
 \mathcal{R}_\text{CCTM}^* = & C_{\text{W}} - \underset{\Phi(t), \rho(t)}{\text{maximize}} & \sum_{t=1}^{T} \sum_{e \in \E} \hat c_e \Phi_e(t) \\ [2ex]
 & \text{subject to} & \Phi_e(t+1) \leq \min \left\{ D_e \big( \Phi(t) \big), \underset{i \in \E^+(e)}{\min} S_{i,e} \big( \Phi(t), \varphi(t) \big) \right\} \hspace{-0.5cm} & \forall e \in \mathcal{L} \\ [2ex]
 & &\Phi_e(t+1) \leq D_e \big( \Phi(t) \big) &\forall e \in \N_U \\ [1ex]
 & &\Phi_e(t+1) \leq \varphi_e(t) &\forall e \in \N_S \cup \N_A \\ [1ex]
 & &\hat \Phi_e(t+1) \leq -\varphi_e(t) &\forall e \in \N_S \cup \N_A \\ [1ex]
 & & \text{CCTM constraints \eqref{eq:csm_constraints}} \\ [1ex]
 & & \rho(0) ~\text{given, } \Phi_e(0) = 0, \hat \Phi_e(0) = 0 .
\end{array}
\end{equation}
Details proving the equivalence of the relaxed problem \eqref{eq:problem_relaxed} and its counterpart in terms of the cumulative flows \eqref{eq:problem_cctm_relaxed} are given in \ref{appendix:backtransformation}. 

The system dynamics and constraints of the CCTM are concave and state-monotone (Lemma \ref{lemma:csm_dynamics} and \ref{lemma:csm_constraints} respectively), and we have shown that the objective function encoding the total discharge flow is also concave and state-monotone. Hence, we can apply Theorem \ref{theorem:csm}, which implies that the solution of the relaxed problem can be used to compute an optimizer of 
\begin{equation}
\label{eq:problem_cctm_original}
\begin{array}{rrll}
 \mathcal{P}_\text{CCTM}^* =  & C_\TTS - \underset{\Phi(t), \rho(t)}{\text{maximize}} & \sum_{t=1}^{T} \sum_{e \in \E} \hat c_e \Phi_e(t) \\ [2ex]
 & \text{subject to} & \text{CCTM dynamics \eqref{eq:csm_flow} and constraints \eqref{eq:csm_constraints}} \\ [1ex]
 & & \rho(0) ~\text{given, } \Phi_e(0), \hat \Phi_e(0) = 0 = 0 .
\end{array}
\end{equation}
by forward simulation. We denote the resulting, optimal trajectory by $\Phi^*(t)$. By equivalence of the FNC problem \eqref{eq:problem_original} and its counterpart in cumulative flows \eqref{eq:problem_cctm_original} according to Lemma \ref{lemma:CCTM}, we can compute a feasible trajectory $\big( \rho^*(t), ~\phi^*(t) \big)$ of the FNC problem by applying the inverse transformation \eqref{eq:state_transformation}. It follows that $\mathcal{R}_\text{CTM}^* = \mathcal{R}_\text{CCTM}^* = \mathcal{P}_\text{CCTM}^* \geq \mathcal{P}_\text{CTM}^*$. In addition, we have that $\mathcal{R}_\text{CTM}^* \leq \mathcal{P}_\text{CTM}^*$ since \eqref{eq:problem_relaxed} is a relaxation of the FNC problem \eqref{eq:problem_original}. Therefore $\mathcal{R}_\text{CTM}^* = \mathcal{P}_\text{CTM}^*$ which proves the first claim of the theorem. In addition, $\phi_e^*(t) = \tilde \phi_e(t)$ for all controlled flows $e \in \N_S \cup \N_A$ and hence the second claim of the theorem follows: a solution to the FNC problem \eqref{eq:problem_original} can be obtained by forward simulation of the optimal inputs of the relaxed CTM problem \eqref{eq:problem_relaxed}. 
\end{proof}
It should be emphasized that while the CCTM is crucial for the proof, it is \emph{not} necessary to perform the transformation from flows to cumulative flows when applying Theorem \ref{theorem:CCTM}. Instead, one can solve relaxation \eqref{eq:problem_relaxed} and perform the forward simulation using the CTM with controlled merging junctions. Note also that the relaxed FNC problem \eqref{eq:problem_relaxed} corresponds to the ``natural" relaxation of the FNC problem, where the flows are not restricted lie on the fundamental diagram, but are only constrained to its convex hypograph (the set of points below the graph). All solutions of the FNC problem, regardless of the actuation scheme, are feasible in the natural relaxation \eqref{eq:problem_relaxed}, as discussed in \cite{como2016convexity}. Therefore, Theorem \ref{theorem:CCTM} implies that if control of all merging flows (as for symmetric junctions) is available and Assumptions \ref{assumption:graph} and \ref{assumption:fd} hold, further demand control in other cells is not advantageous in reducing TTS. Furthermore, additional assumptions (as made for asymmetric and sub-critical junctions) can alleviate the need for control of certain merging flows. Intuitively, the purpose of control for merging junctions is to prioritize certain flows over others, trading off the consequences of congestion propagating into different upstream cells. The relaxed FNC problem implicitly assumes that prioritizing individual merging flows is possible (or that the merging junction is sub-critical, that is, uncongested, and hence no trade off is necessary), however, it is agnostic about the means by which this is accomplished. The definitions of symmetric junctions and asymmetric junctions describe two different actuation schemes which make prioritization of individual merging flows possible.

\section{State monotonicity of merging and diverging junctions} \label{sec:closer} 

Monotonicity of the CTM has already been studied comprehensively. In particular, the CTM of a freeway segment with only onramp and off-ramp junctions is monotone if a simplified onramp model (similar to the asymmetric junction model) is used \cite{gomes2008behavior}. However, if the CTM is expressed with the densities as states, the dynamics of FIFO diverging junctions are \emph{not} monotone, though they satisfy a mixed-monotonicity property \cite{coogan2015compartmental,coogan2016stability}. There also exist monotone models for diverging junctions, which have been used to employ results from monotone system theory to analyze stability of traffic networks \cite{lovisari2014stability,lovisari2014stability2}. However, these models do \emph{not} preserve the turning rates.

In this work, we showed that the FIFO diverging model can be described by a state-monotone system that uses the cumulative flows as states. This means in particular that congestion spillback, the effect which \cite{coogan2015compartmental} highlight as important to consider for achieving benefits via active freeway control, are present in our model. We illustrate the effect in Example \ref{example:diverge}, using both density and cumulative flow states. Note that in the following example and in subsequent sections, we will make use of a shorthand notation for indices: whenever cells are labeled using integers, e.g., $\E = \{e_1, e_2, \dots, e_n\}$, we will use the shorthand notation $\rho_1(t),~ \rho_2(t) \dots \rho_n(t)$ with only the integer as index instead of $\rho_{e_1}(t),~ \rho_{e_2}(t) \dots \rho_{e_n}(t)$ to denote any quantities that are indexed by cells.

\begin{figure}[t] 
	\centering
	\begin{subfigure}[b]{0.28\textwidth}
		\includegraphics[width=\textwidth]{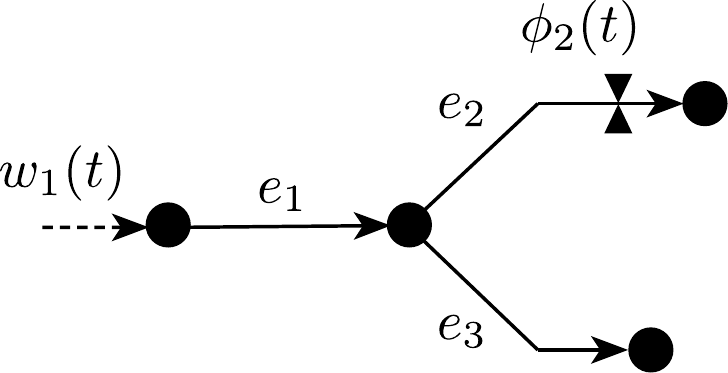}
		\vspace{0.05cm}
		\caption{Simple network with a FIFO diverging junction.}
		\label{fig:diverge_structure}
	\end{subfigure} ~~
	\begin{subfigure}[b]{0.33\textwidth}
		\includegraphics[width=\textwidth]{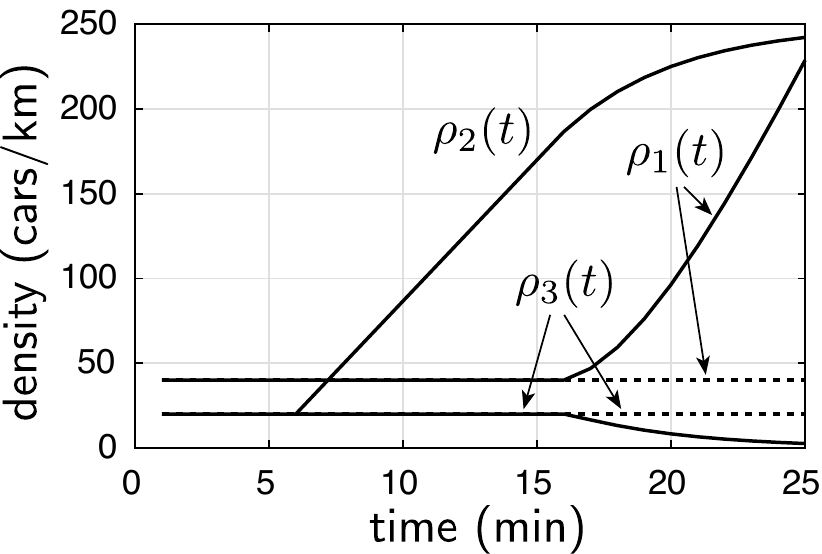}
		\caption{Evolution of traffic densities.}
		\label{fig:diverge_density}
	\end{subfigure} ~~
	\begin{subfigure}[b]{0.33\textwidth}
		\includegraphics[width=\textwidth]{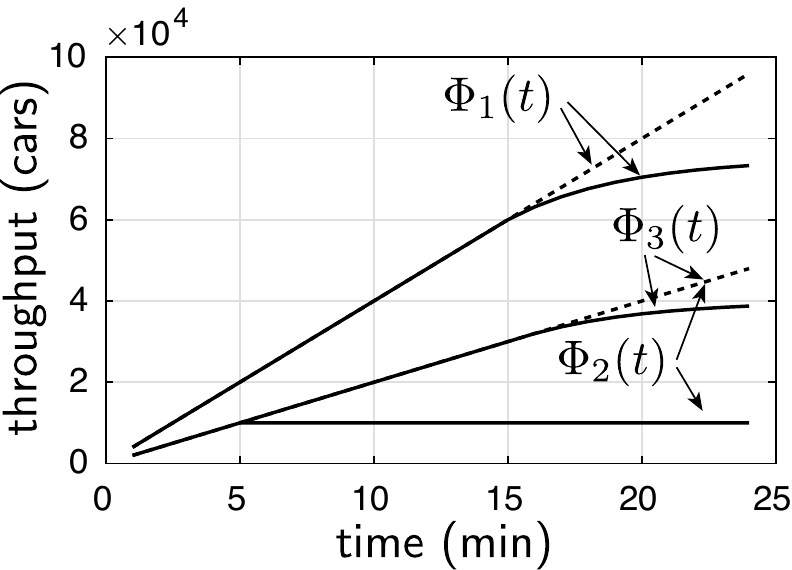}
		\caption{Evolution of cumulative flows.}
		\label{fig:diverge_throughput}
	\end{subfigure}
	\caption{A reduction in the controlled flow $\phi_2(t)$ leads to a non-monotone response in terms of the traffic densities, but a monotone response in terms of the cumulative flows. The trajectories obtained if $\phi_2(t)$ is not reduced are shown in dashed.}
	\label{fig:example_diverge}
\end{figure} 
\begin{example}
\label{example:diverge}
Consider a simple network as depicted in Figure \ref{fig:example_diverge}. For simplicity, we chose a triangular fundamental diagram with identical parameters among all cells, in particular $l_e = 1$km, $v_e = 100$km/h, $w_e = 25$km/h and $\bar\rho_e = 250\text{cars/km}$. The turning rates of the diverging junction are $\beta_{2,1} = \beta_{3,1} = \frac{1}{2}$ and the external inflow $w_1(t) = 4000$cars/h is constant. The system starts in a free-flow equilibrium.\footnote{Here, ``equilibrium" means that the densities are constant, but not the cumulative flows.} Equilibrium states are depicted by dashed lines in Figure \ref{fig:diverge_density} and \ref{fig:diverge_throughput}. We compare this baseline scenario against a scenario where cell $e_2$ upstream of the diverging junction experiences congestion: at $t = 6$min, the controlled flow $\phi_2(t)$ is reduced to zero (or equivalently, $\Phi_2(t) = \Phi_2(6 \text{min})$ for all $t \geq 6$min), which leads to an increase of $\rho_2(t)$ first and $\rho_1(t)$ subsequently, depicted by solid lines in Figure \ref{fig:diverge_density}. A state-monotone system model should preserve the ordering of trajectories, but the density $\rho_3(t)$ falls \emph{below} the corresponding baseline trajectory, thus demonstrating that the model is not state monotone in the densities. By contrast, the reduction in $\Phi_2(t)$ reduces the growth of both $\Phi_1(t)$ and $\Phi_3(t)$ in comparison to the baseline trajectory, as it can be seen in the solid lines in Figure \ref{fig:diverge_throughput}. This bevavior is consistent with a state-monotone model.
\end{example}

While the dynamics of FIFO diverging junctions are state-monotone in the cumulative flows, the dynamics of congested, uncontrolled merging junctions are not. This is the reason for restricting admissible types of merging junctions to the three types introduced in Section \ref{sec:networks}. We illustrate non-monotonicity of a congested, uncontrolled merging junction in Example \ref{example:merge}.

\begin{figure}[t] 
	\centering
	\begin{subfigure}[b]{0.28\textwidth}
		\includegraphics[width=\textwidth]{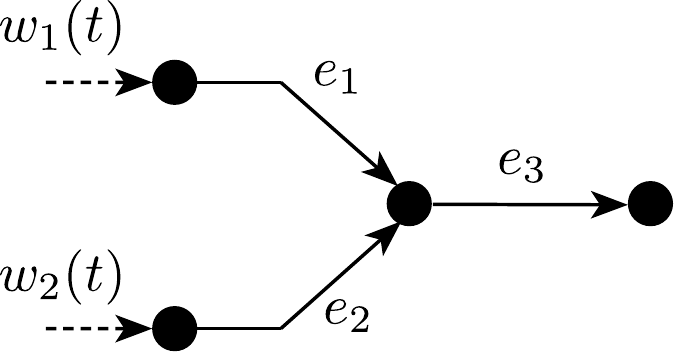}
		\vspace{0.05cm}
		\caption{Simple network with an uncontrolled merging junction.}
		\label{fig:merge_structure}
	\end{subfigure} ~~
	\begin{subfigure}[b]{0.33\textwidth}
		\includegraphics[width=\textwidth]{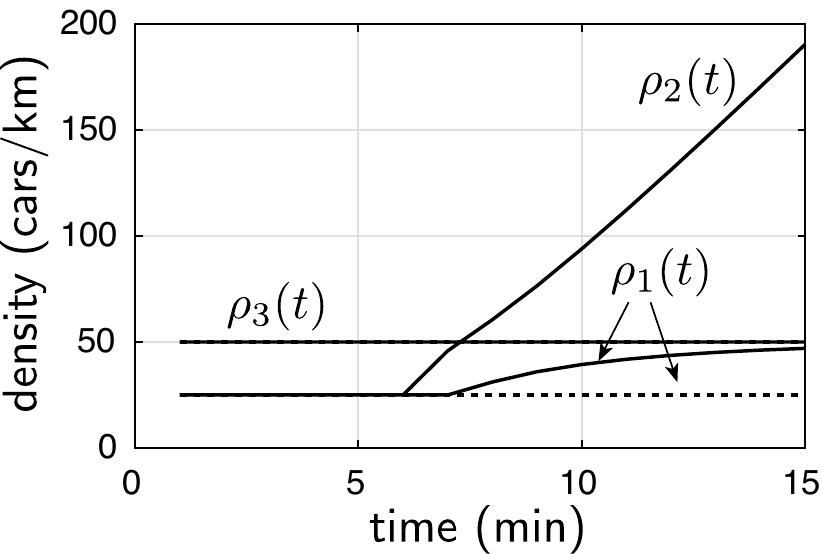}
		\caption{Evolution of the densities.}
		\label{fig:merge_density}
	\end{subfigure} ~~
	\begin{subfigure}[b]{0.33\textwidth}
		\includegraphics[width=\textwidth]{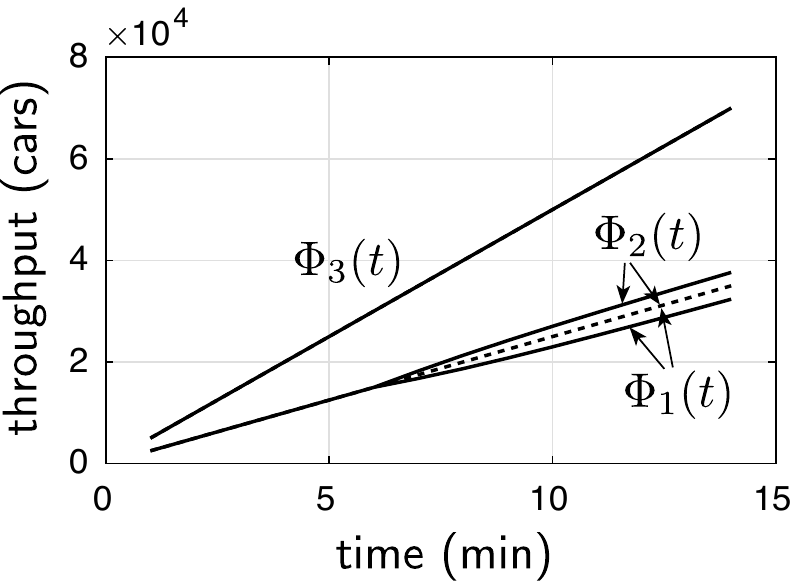}
		\caption{Evolution of the cumulative flows.}
		\label{fig:merge_throughput}
	\end{subfigure}
	\caption{An increase in the external inflow $w_2(t)$ leads to a monotone response in terms of the densities but to a non-monotone response in terms of the cumulative flows. The trajectories obtained if $w_2(t)$ is not increased are shown in dashed.}
	\label{fig:example_merge}
\end{figure} 
\begin{example}
\label{example:merge}
Consider a simple network as depicted in Figure \ref{fig:example_merge}. We choose again a triangular fundamental diagram with identical parameters among all cells, in particular $l_e = 1$km, $v_e = 100$km/h, $w_e = 25$km/h and $\bar\rho_e = 250$cars/km. The external inflow $w_1(t) = 2500$cars/h is constant, whereas $w_2(t) = 2500$cars/h for $0 \leq t \leq 5$min and $w_2(t) = 5000$cars/h for $t > 5$min; the resulting trajectories are depicted as solid lines in Figures \ref{fig:merge_density} and \ref{fig:merge_throughput}. The case with $w_2(t) = 2500$cars/h for all $t$ serves as a reference; the corresponding trajectories are depicted as dashed lines. We choose the \emph{proportional priority} merging model \cite{kurzhanskiy2010active,coogan2016stability}, in which the supply of free space of the cell downstream of the merging junction is allocated proportionally to the demands of the upstream cells, whenever total demand exceeds supply. In particular,
\begin{equation*}
\phi_1(t) = d_1 \big( \rho_1(t) \big) \cdot \min \left\{ 1,~  \frac{ s_3 \big( \rho_3(t) \big) }{ d_1 \big( \rho_1(t) \big) + d_2 \big( \rho_2(t) \big) } \right\}
\end{equation*}
and
\begin{equation*}
\phi_2(t) = d_2 \big( \rho_2(t) \big) \cdot \min \left\{ 1,~ \frac{ s_3 \big( \rho_3(t) \big) }{ d_1 \big( \rho_1(t) \big) + d_2 \big( \rho_2(t) \big) } \right\} ~.
\end{equation*}
This merging model \emph{violates} Assumption \ref{assumption:merges}, but its dynamics are known to be monotone in the densities \cite{lovisari2014stability,lovisari2014stability2}. Specifically, it can be seen that the increased inflow leads to an increase in the density $\rho_2(t)$, that in turn leads to an increase in $\rho_1(t)$, in accordance with monotonicity of the dynamics in the densities (Figure \ref{fig:merge_density}). By contrast, one observes that in the same scenario, $\Phi_2(t)$ increases faster than its reference trajectory, whereas $\Phi_1(t)$ falls below the reference trajectory (Figure \ref{fig:merge_throughput}). The merging junction is operating at its maximum capacity and an increase in the demand from cell 2 leads to a decrease in the flow from cell 1, which is a non-monotone effect in cumulative flows.
\end{example}

These two examples demonstrate that state-monotonicity of a system might depend on the choice of the state. It is worth highlighting that while monotonicity (respectively mixed-monotonicity) of certain CTM variants in the densities has been used to analyze stability properties \cite{lovisari2014stability,coogan2016stability} and robustness of optimal trajectories \cite{como2016convexity}, the system equations are neither concave nor convex in the densities. Therefore, even though the CTM in densities is monotone for certain networks, in particular for networks without FIFO diverging junctions, Theorem \ref{theorem:csm} cannot be applied directly.

\section{Numerical study} \label{sec:application} 

In this section, we consider two freeway networks and verify that solutions of the FNC problem obtained via forward simulation of the solution of the relaxed problem are indeed feasible and optimal. The first example is based on a real freeway in Grenoble, France. Here, we focus on the asymmetric junction as a prototypical model for onramps controlled via ramp metering and demonstrate that onramp occupancy constraints can be included without compromising exactness of the convex relaxation. The second example is based on a fictitious freeway network designed to incorporate both FIFO diverging junctions and all types of merging junction considered in Assumption \ref{assumption:merges}. 

In practice, model uncertainty, non-monotone effects and potential secondary control objectives like fairness constraints also need to be taken into account. Even though these considerations are largely outside the scope of this work, we exemplify in Section \ref{sec:heuristic} how our theoretical results can be used to target the capacity drop in freeway ramp metering, to illustrate how our results can be employed in the design of heuristics for models that violate some of our assumptions.

\subsection{Freeway segment with ramp metering} \label{sec:grenoble} 

\begin{figure}[t] 
	\centering
		\includegraphics[width=0.45\textwidth]{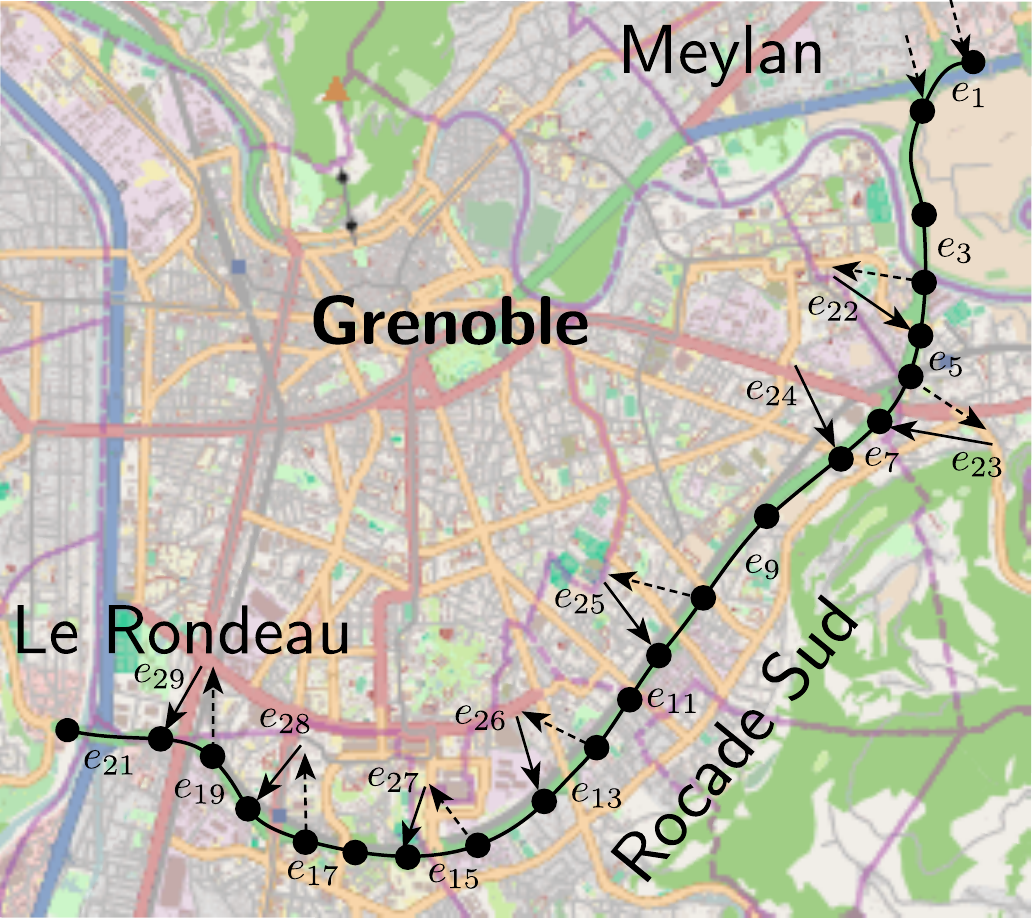}
		\vspace{0.05cm}
		\caption{Road topology of the Rocase Sud. Map data \textcopyright 2016 Open Street Maps.}
		\label{fig:grenoble_structure}
\end{figure}
	
\begin{figure}[t] 
	\centering	
	\begin{subfigure}[b]{0.4\textwidth}
		\includegraphics[width=\textwidth]{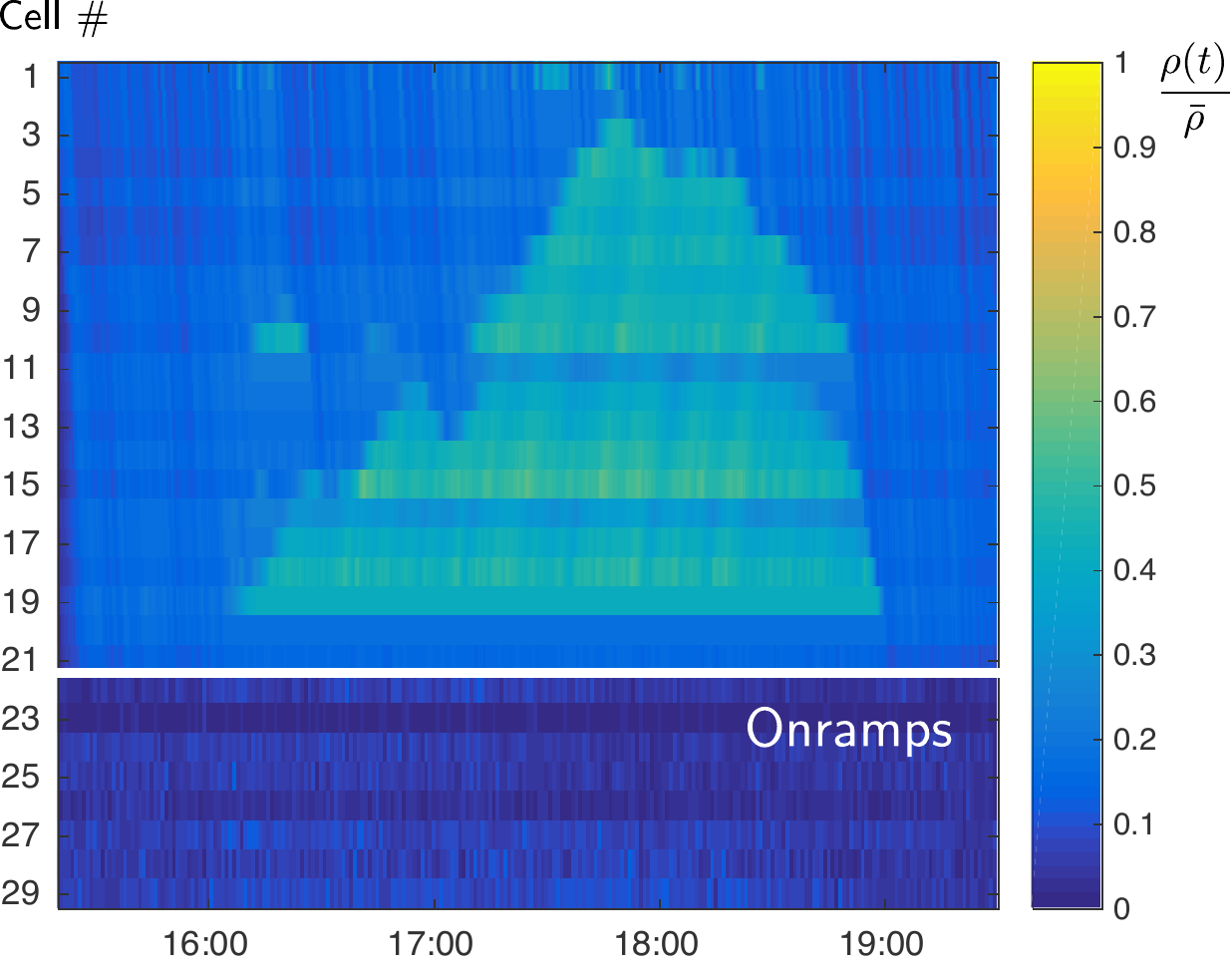}
		\caption{Densities, uncontrolled case.}
		\label{fig:grenoble_1}
	\end{subfigure} 
	\hspace{0.5cm}
	\begin{subfigure}[b]{0.4\textwidth}
		\includegraphics[width=\textwidth]{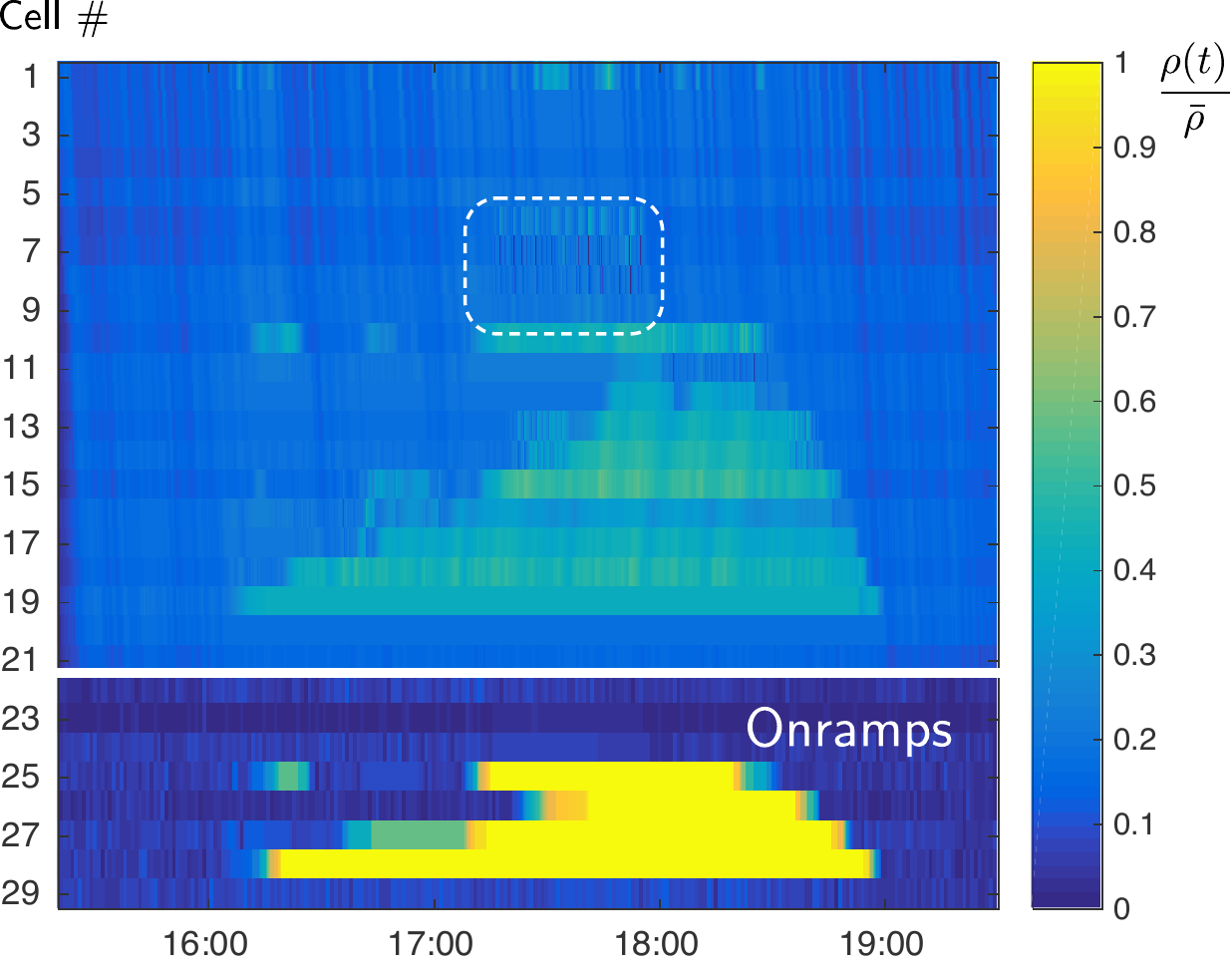}
		\caption{Densities, optimal LP solution.}
		\label{fig:grenoble_2}
	\end{subfigure}
	\begin{subfigure}[b]{0.4\textwidth}
		\includegraphics[width=\textwidth]{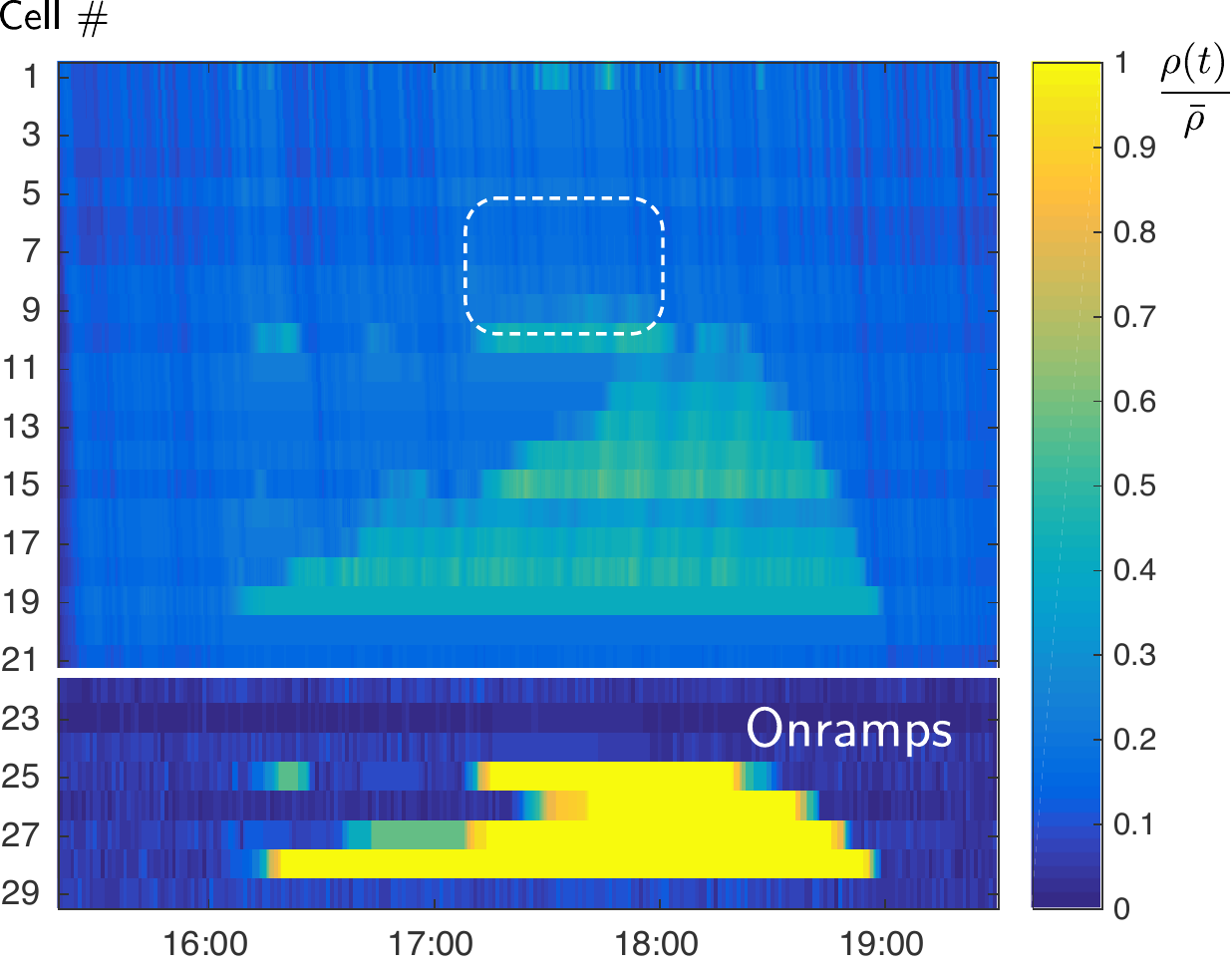}
		\caption{Densities, forward simulation.}
		\label{fig:grenoble_3}
	\end{subfigure} ~
	\begin{subfigure}[b]{0.23\textwidth}
		\includegraphics[width=\textwidth]{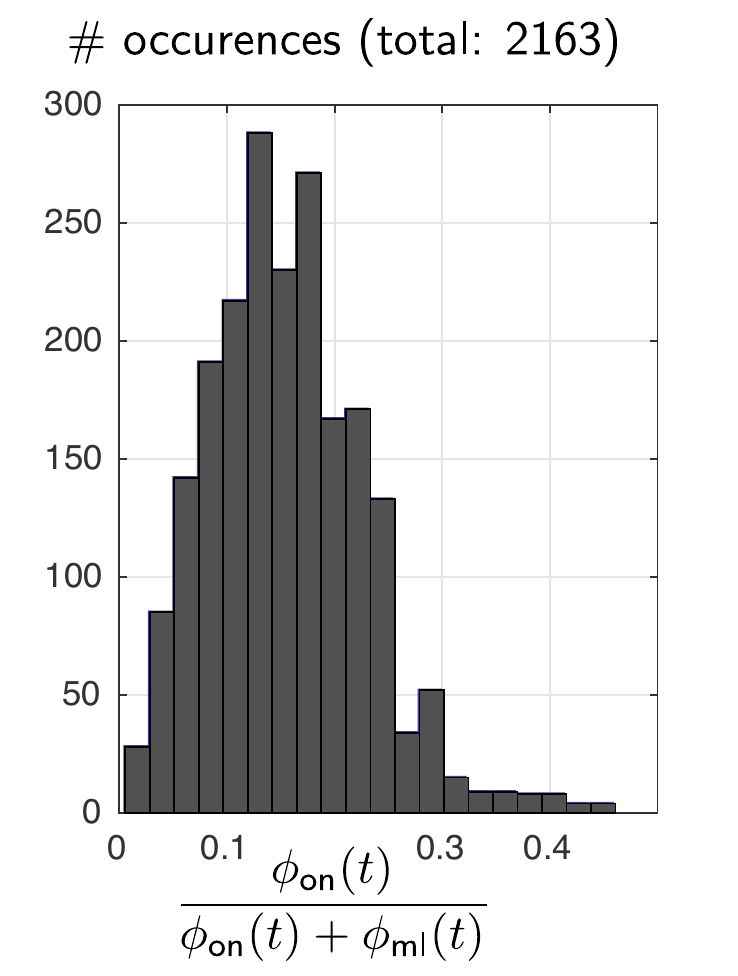}
		\caption{Onramp flow in relation to mainline flow.}
		\label{fig:hist_onramps}
	\end{subfigure} ~
	\begin{subfigure}[b]{0.3\textwidth}
		\includegraphics[width=\textwidth]{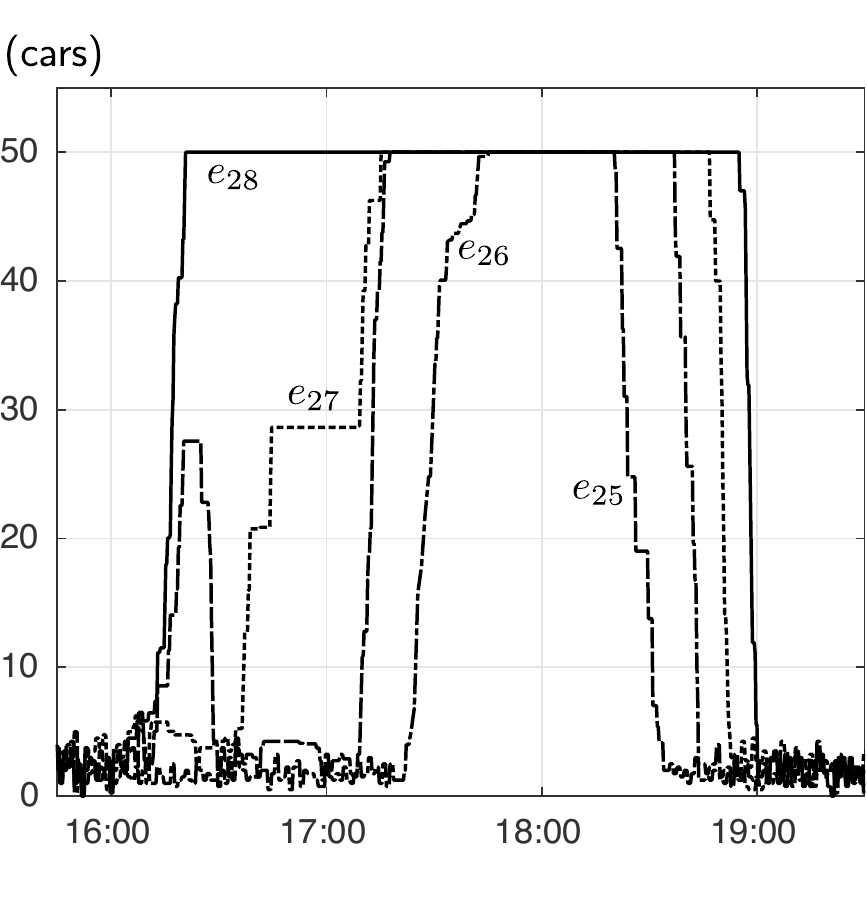}
		\caption{Onramp queues.}
		\label{fig:onramp_queues}
	\end{subfigure}
	\caption{ Simulation results for the Rocade Sud, for the afternoon/evening rush-hour of April 14th, 2014. The densities without ramp metering are depicted in Figure \ref{fig:grenoble_1}. The flows in the optimal solution depicted in Figure \ref{fig:grenoble_2} do not equal the minimum of demand and supply everywhere, in particular during the duration of congestion in cells $e_6$ to $e_9$ (indicated by the white-dashed boxes). The forward simulation (Figure \ref{fig:grenoble_3}) creates a solution that satisfies the system equations everywhere, with the same objective value (TTS). Figure \ref{fig:hist_onramps} shows the distribution of metered onramp inflow divided by total cell inflow, while the mainline is congested. It suggests that modeling the metered onramps as asymmetric junctions is justified for this particular freeway and demand profile, as condition \eqref{eq:asmerge_assumption} is satisfied by a large margin. Figure \ref{fig:onramp_queues} depicts the evolution of relevant onramp queues.}
	\label{fig:grenoble}
\end{figure} 

In this section, we consider the problem of ramp metering control of the Rocade Sud, a freeway with only onramp and off-ramp junctions. The freeway model in question is based on a congestion-prone freeway in the vicinity of Grenoble \cite{de2015grenoble} with 8 metered onramps and 7 off-ramps. The mainline flow dynamics are modeled using a piecewise-affine fundamental diagram with parameters as described in Appendix \ref{appendix:rocade}. Ramp metering will be installed on this freeway in the near future.

We model the metered onramps using asymmetric junctions. The advantage of the asymmetric junction model over the symmetric one is that only control of the inflow from the onramp is necessary in the former model. In particular, the onramp model using asymmetric junctions will be similar to the one used in the asymmetric cell transmission model (ACTM) \cite{gomes2008behavior}. In the ACTM, it is assumed that inflow from the onramps is \emph{not} constrained by congestion on the mainline, similar to the controlled flow in an asymmetric junction. Consider a cell $e \in \mathcal{S}$ modeling a metered onramp. In the ACTM, the onramp is modeled as an integrator 
\begin{equation*}
\rho_e(t+1) = \rho_e(t) + \frac{\Delta t}{l_e} \cdot \big( w_e(t) - r_e(t) \big) ,
\end{equation*}
with $\rho_e(t)$ the onramp occupancy (here expressed as a density), $w_e(t)$ the external inflow and $r_e(t)$ the metered flow entering the mainline. The dynamics are subject to an onramp capacity constraint $\rho_e(t) \leq \bar\rho_e$ and metering constraints $0 \leq r_e(t) \leq \bar r_e$. We can replicate the metering constraint in our model by defining the onramp demand as $d_e(\rho_e(t)) = \max \left\{ \frac{l_e}{\Delta t} \rho_e(t), \bar r_e \right\}$ and including the ramp in the set of asymmetric junctions $e \in \N_A$. This particular demand function is used solely to reproduce the integrator-like onramp dynamics of the ACTM. Neither should the factor $ \frac{l_e}{\Delta t}$ be interpreted as the free-flow velocity on the onramp, nor should one assume that the density $\rho_e(t)$ is necessarily homogeneous on the onramp. It turns out that the capacity constraints for metered onramps can also be included in the FNC problem:
\begin{corollary} 
\label{lemma:onramps}
If capacity constraints $\rho_e(t) \leq \bar\rho_e$ for source cells $e \in \mathcal{S}$ are included in the FNC problem \eqref{eq:problem_original} and if the corresponding constraints are also included in the relaxed FNC problem \eqref{eq:problem_relaxed}, then Theorem \ref{theorem:CCTM} remains valid
\end{corollary}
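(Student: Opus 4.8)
The plan is to show that imposing an upper bound on source-cell density does not break the concave, state-monotone structure exploited in the proof of Theorem \ref{theorem:CCTM}, so that Theorem \ref{theorem:csm} still applies. First I would translate the constraint $\rho_e(t) \leq \bar\rho_e$ for $e \in \mathcal{S}$ into the cumulative-flow coordinates used in the CCTM. By the conservation law \eqref{eq:density_transformation}, $\rho_e(\Phi(t))$ is an affine function of $\Phi(t)$; moreover, since $e$ is a source cell, $\sum_{i \in \E} \beta_{e,i}\Phi_i(t) = 0$, so in fact $\rho_e(\Phi(t)) = \rho_e(0) + \frac{1}{l_e}(W_e(t) - \Phi_e(t))$ depends only on $\Phi_e(t)$, decreasingly. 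Writing the constraint as $g_t(\Phi(t),\varphi(t)) := \bar\rho_e - \rho_e(\Phi(t)) \geq 0$, the left-hand side equals $\bar\rho_e - \rho_e(0) - \frac{1}{l_e}W_e(t) + \frac{1}{l_e}\Phi_e(t)$, which is affine (hence concave) in $\Phi(t)$ and nondecreasing in $\Phi_e(t)$, hence state-monotone in the sense of Definition \ref{def:state_monotone}. So the new constraints are of the form required by Theorem \ref{theorem:csm}.

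Next I would argue that adding these constraints to the CCTM counterpart problems preserves all the equivalences used in the proof of Theorem \ref{theorem:CCTM}. The transformation between the relaxed FNC problem \eqref{eq:problem_relaxed} and its cumulative-flow version \eqref{eq:problem_cctm_relaxed} (and likewise between \eqref{eq:problem_original} and \eqref{eq:problem_cctm_original}) is a bijection on trajectories given by \eqref{eq:state_transformation}, and the density constraint is simply expressed in either coordinate system via this same substitution, so the constrained problems remain equivalent. Then Theorem \ref{theorem:csm}, applied with the augmented constraint vector $g_t$ (still concave and state-monotone), yields that the relaxation of the constrained CCTM problem is exact: from any optimizer $(\tilde\Phi,\tilde\varphi)$ of the relaxed problem, forward-simulating the CCTM dynamics with inputs $\tilde\varphi$ produces a trajectory $\Phi^*$ with $\Phi^*(t) \geq \tilde\Phi(t)$ componentwise, which by state-monotonicity of $g_t$ still satisfies $g_t(\Phi^*(t),\tilde\varphi(t)) \geq g_t(\tilde\Phi(t),\tilde\varphi(t)) \geq 0$; hence the forward-simulated trajectory respects the density caps. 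Transforming back via \eqref{eq:state_transformation} gives a feasible, optimal trajectory of the constrained FNC problem \eqref{eq:problem_original}, and the chain of (in)equalities $\mathcal{R}^*_\text{CTM} = \mathcal{R}^*_\text{CCTM} = \mathcal{P}^*_\text{CCTM} \geq \mathcal{P}^*_\text{CTM} \geq \mathcal{R}^*_\text{CTM}$ closes exactly as before.

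The one point that needs genuine care — and which I expect to be the main obstacle — is the interaction between the new density constraints and Lemma \ref{lemma:invariance} / Assumption \ref{assumption:merges}: one must make sure the constrained problem is still well-posed, i.e.\ that there is no hidden coupling forcing an input choice that violates the asymmetric-junction condition \eqref{eq:asmerge_assumption}, and that the corollary's hypothesis "as long as the optimal control problem remains feasible" (mentioned already in Section \ref{sec:networks}) is what carries the argument. Concretely, I would note that the density constraint on a source cell only restricts $\Phi_e$ from below via $\Phi_e(t) \geq \rho_e(0)l_e + W_e(t) - \bar\rho_e l_e$, i.e.\ it forces a minimum amount of cumulative outflow (discharge) from the ramp; since increasing $\Phi_e(t+1)$ toward $D_e(\Phi(t))$ is always feasible for the CCTM dynamics and only helps monotone downstream constraints, the forward-simulated trajectory (which takes the largest feasible flows) is precisely the one most likely to satisfy such lower bounds — this is why state-monotonicity is exactly the right property. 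I would then remark that feasibility of the relaxed problem is assumed (as is standard, cf.\ the discussion after Assumption \ref{assumption:merges}), so no additional existence claim is required, and conclude that Theorem \ref{theorem:CCTM} holds verbatim with the augmented constraint set.
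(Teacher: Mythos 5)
Your proposal is correct and follows essentially the same route as the paper: express the source-cell capacity constraint in cumulative-flow coordinates, observe that it is affine (hence concave) and nondecreasing in $\Phi_e(t)$ (hence state-monotone), and include it among the generic constraints $g_t \geq 0$ so that Theorem \ref{theorem:csm} and the argument of Theorem \ref{theorem:CCTM} apply unchanged. Your additional remarks on the forward simulation preserving constraint satisfaction and on feasibility are just the inner workings of Theorem \ref{theorem:csm}, which the paper invokes directly.
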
 
\begin{proof} 
If transformed according to the definition of cumulative controlled flows, the source cell capacity constraint can be expressed as 
\begin{align*} 
\big( \Phi_e(t) - W_e(t) \big) \cdot \Delta t + l_e \bar \rho_e &\geq 0 ~,
\end{align*}
for all source cells $e \in \mathcal{S}$ in the CCTM. These constraints are affine and hence concave and it is easy to see that they are also state-monotone. Therefore, they can be included in the generic constraints $g_t( x(t), u(t) ) \geq 0$ (according to Theorem \ref{theorem:csm}) when analyzing the relaxation of the CCTM. Thus, all prior arguments asserting exactness of the relaxation also apply.
\end{proof} 
We include capacity constraints $\rho_e(t) \leq \bar\rho_e$ for all metered onramps in the optimal control problem. Off-ramps of the Rocade Sud are modeled using constant turning rates. The densities on the off-ramps are not part of the model and hence, mainline flow will never be obstructed by the state of the off-ramps. However, outflow via the off-ramps is affected by congestion on the mainline, in accordance with the dynamics of FIFO diverging junctions.

The resulting system model satisfies Assumptions \ref{assumption:graph}, \ref{assumption:fd} and \ref{assumption:merges}. Therefore, Theorem \ref{theorem:CCTM} is applicable. It should be noted that for the special case of a freeway segment with only onramp and off-ramp junctions as considered in this example, \cite{gomes2006optimal} have already proven equivalence of the relaxed FNC problem and the original problem.\footnote{Strictly speaking, \cite{gomes2006optimal} introduce additional technical conditions on the sampling time and the existence of a low-demand ``decongestion period" at the end of the horizon which ensure that the solution to the relaxed problem is always feasible for the original problem. However, these technical assumptions are not critical, as even without them, one can create an optimal solution of the original problem by forward simulation, using the optimal inputs of the relaxation. Also, \cite{gomes2006optimal} only consider the triangular fundamental diagram. The extension to a concave fundamental diagram is straightforward, however.}

We pose the finite horizon optimal control problem using the historical (external) traffic demands of the afternoon/evening rush-hour on April 14th, 2014, with the objective of minimizing TTS. Since we assume a PWA fundamental diagram, the optimization problem \eqref{eq:problem_relaxed} can be reformulated as a linear program (LP). For a sampling time of $15$sec\footnote{By Assumption \ref{assumption:fd}, $\Delta t \leq 20$sec for this freeway. The critical cells have length $l_e = 0.5$km and free-flow velocity $v_e = 90$km/h.}, the resulting LP with a horizon of $5$ hours has $67284$ primal variables and $134512$ constraints. It is solved by Gurobi \cite{gurobi} in $270$sec.\footnote{The solution was found using a 2013 MacBook Pro with 2.3GHz Intel i7 processor (4 cores). Gurobi was interfaced via Matlab.} The results of the optimization are depicted in Figure \ref{fig:grenoble_2}. It turns out that the optimizer is not unique and the particular solution found by the optimization routine does not coincide exactly with the fundamental diagram at all times, in particular during the duration of congestion in cells $e_6$ to $e_9$ (indicated by the white-dashed boxes). No off-ramps are present in the respective parts of the freeway (see Figure \ref{fig:grenoble_structure}) and hence, there is no inherent incentive to maximize flows during times when downstream flow is obstructed by congestion. Therefore, it is necessary to perform a forward simulation using the optimal control inputs computed for the relaxed problem. The results of the forward simulation are depicted in Figure \ref{fig:grenoble_3}. For comparison, we also depict the uncontrolled case (without ramp metering) in Figure \ref{fig:grenoble_1}. The optimizer of the relaxed FNC problem and the solution obtained by forward simulation achieve the same cost, denoted as $\TTS^*$, as predicted by Theorem \ref{theorem:CCTM}. In comparison to the uncontrolled case with cost $\TTS_{ol}$ (``open loop"), an improvement of $\frac{\TTS_{\text{ol}} - \TTS^*}{\TTS_{\text{ol}}} = 3.5\%$ is achieved. Much of the time spent is in fact due to the \emph{free-flow travel time} $\text{FTT}$, that is, the hypothetical time spent if all vehicles travel at free-flow velocity at all times.\footnote{The FFT can be computed by setting $d_e(\rho_e(t)) = v_e \cdot \rho_e(t)$ and assuming cells with infinite capacity, that is, $s_e( \rho_e(t) ) = + \infty$ and $\bar \rho_e = + \infty$.} Naturally, free-flow travel time cannot be reduced by ramp metering. If one considers only the decrease in delay, defined as $\TTS - \text{FTT}$ \cite{gomes2006optimal}, relative savings of $\frac{\TTS_{\text{ol}} - \TTS^*}{\TTS - \text{FTT}} = 12.1\%$ are obtained. In the optimal solution, cars are held back of the onramps if the corresponding part of the mainline is congested, congestion obstructs or threatens to obstruct upstream FIFO junctions (off-ramps) and sufficient space on the metered onramp is available. Figure \ref{fig:onramp_queues} depicts the evolution of four onramp queues. The onramp queues are constrained to $50$ cars each, and it is apparent that these constraints are satisfied. As stated before, the onramp flow is not explicitly constrained by the supply of free space on the mainline in the ACTM. In \cite{gomes2006optimal} an a posteriori check is performed, with the conclusion that flows from onramps can easily be accommodated on the mainline in the optimal solution. Here, we compare the inflows from the onramps $\phi_{\text{on}}(t)$ to the mainline flow $\phi_{\text{ml}}(t)$ at the same location during times of mainline congestion. The ratio $\frac{ \phi_{\text{on}}(t) }{ \phi_{\text{on}}(t) + \phi_{\text{ml}}(t) }$ is depicted in Figure \ref{fig:hist_onramps}. It turns out that the onramp inflow typically accounts for less than one-third of the total flow. Thus, condition \eqref{eq:asmerge_assumption} is satisfied by a large margin, and the asymmetric junction model seems justified for this particular freeway and demand profile.

\subsection{A heuristic targeting the capacity drop} \label{sec:heuristic} 

Recall that non-decreasing, concave demand functions have been assumed to derive Theorem \ref{theorem:CCTM}. In case of a capacity drop as in Figure \ref{fig:ctm_fd_c}, the demand function is not concave and hence, the natural relaxation \eqref{eq:problem_relaxed} of the FNC problem is no longer convex. In such a case, it is also easy to construct examples where control of merging flows alone is not sufficient to make optimal solutions to the relaxation feasible in the non-relaxed problem. This suggests that the optimization of models involving a capacity drop is substantially more difficult, as no exact convex relaxation is available.

In this section, we seek to demonstrate that instead of invoking non-convex optimization, one can also use the results in this work to target the effects of the capacity drop heuristically, while retaining a convex problem.
To do so, we consider again the Rocade Sud as in the previous section, but we now assume that every demand function of mainline cells exhibits a capacity drop as depicted in Figure \ref{fig:ctm_fd_c}, where the demand function decreases by $10\%$ as soon as a critical density is exceeded. In addition, the maximal throughput of all cells is increased so that the throughput in congestion, when the capacity drop is active, equals the throughput in the monotone model, so that comparable congestion patterns result. All other model parameters remain unchanged. 

In times of congestion, the relaxed FNC problem often allows for multiple optimal solutions with equal objective values. They correspond to situations where the bottleneck flows are equal (in the monotone controller model), but the waiting vehicles are distributed differently on onramp queues and mainline congestion. A capacity drop penalizes mainline congestion. Therefore, we consider a heuristic control objective
\begin{equation*}
\TTS_{\epsilon} := \sum_{t = 1}^{T} \Bigg( \sum_{e \in \E \setminus \N_A} l_e \cdot \rho_e(t) + (1 - \epsilon) \cdot  \sum_{e \in \N_A}  l_e \cdot \rho_e(t) \Bigg) ,
\end{equation*}
where waiting times incurred on the onramps are penalized less than time spent on the mainline. The idea is to choose $\epsilon > 0$ small, to stay close to the objective of minimizing TTS, while selecting a solution in which vehicles are released as late as possible from the onramps. Before using the heuristic objective in simulations, we need to verify that it is state-monotone in the CCTM. To do so, consider
\begin{align*}
c'( \Phi(t) ) &:= \sum_{t=1}^{T} \sum_{e \in \E} \hat c_e \Phi_e(t) + \epsilon \cdot \sum_{t=1}^{T} \sum_{e \in \N_A}  \big( \underbrace{ W_e(t) + \hat \Phi_e(t) }_{= l_e \rho_e(t)}  \big) = C_{\text{W}} - \TTS_\epsilon, 
\end{align*}
with $\hat c_e$ as defined in Section \ref{sec:CCTM}. Maximization of $c'( \Phi )$ encodes minimization of $\TTS_\epsilon$ and $\hat c_e \geq 0$ and $\epsilon > 0$ imply that $c'( \Phi(t) )$ is state monotone indeed. Note that we made use of the auxiliary states $\hat \Phi(t)$ once more to achieve state monotonicity.

\begin{figure}[t] 
	\centering	
	\begin{subfigure}[b]{0.44\textwidth}
		\includegraphics[width=\textwidth]{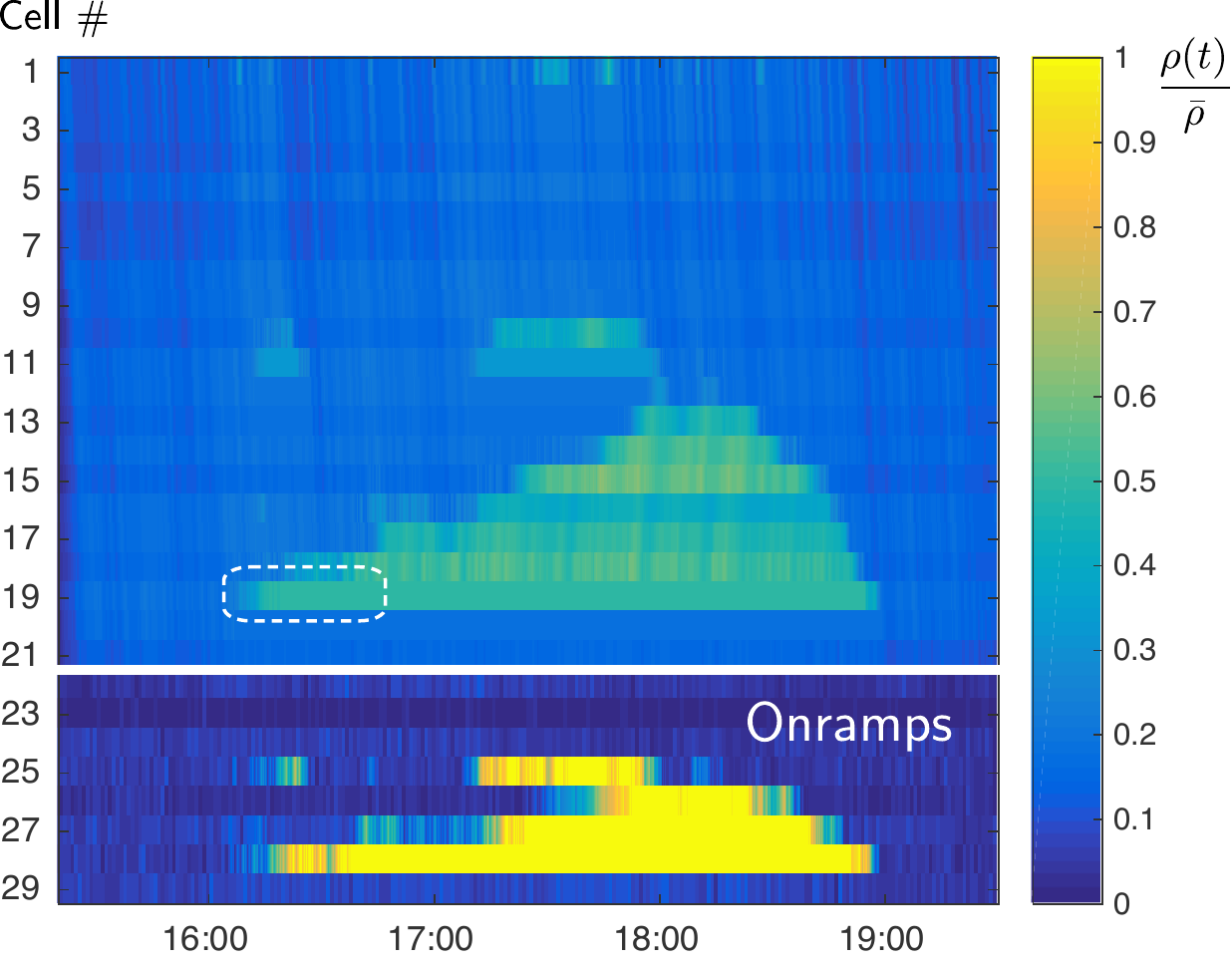}
		\caption{Densities, receding horizon control with default objective ($\TTS$).}
		\label{fig:grenoble_5}
	\end{subfigure}
	\hspace{0.5cm}
	\begin{subfigure}[b]{0.44\textwidth}
		\includegraphics[width=\textwidth]{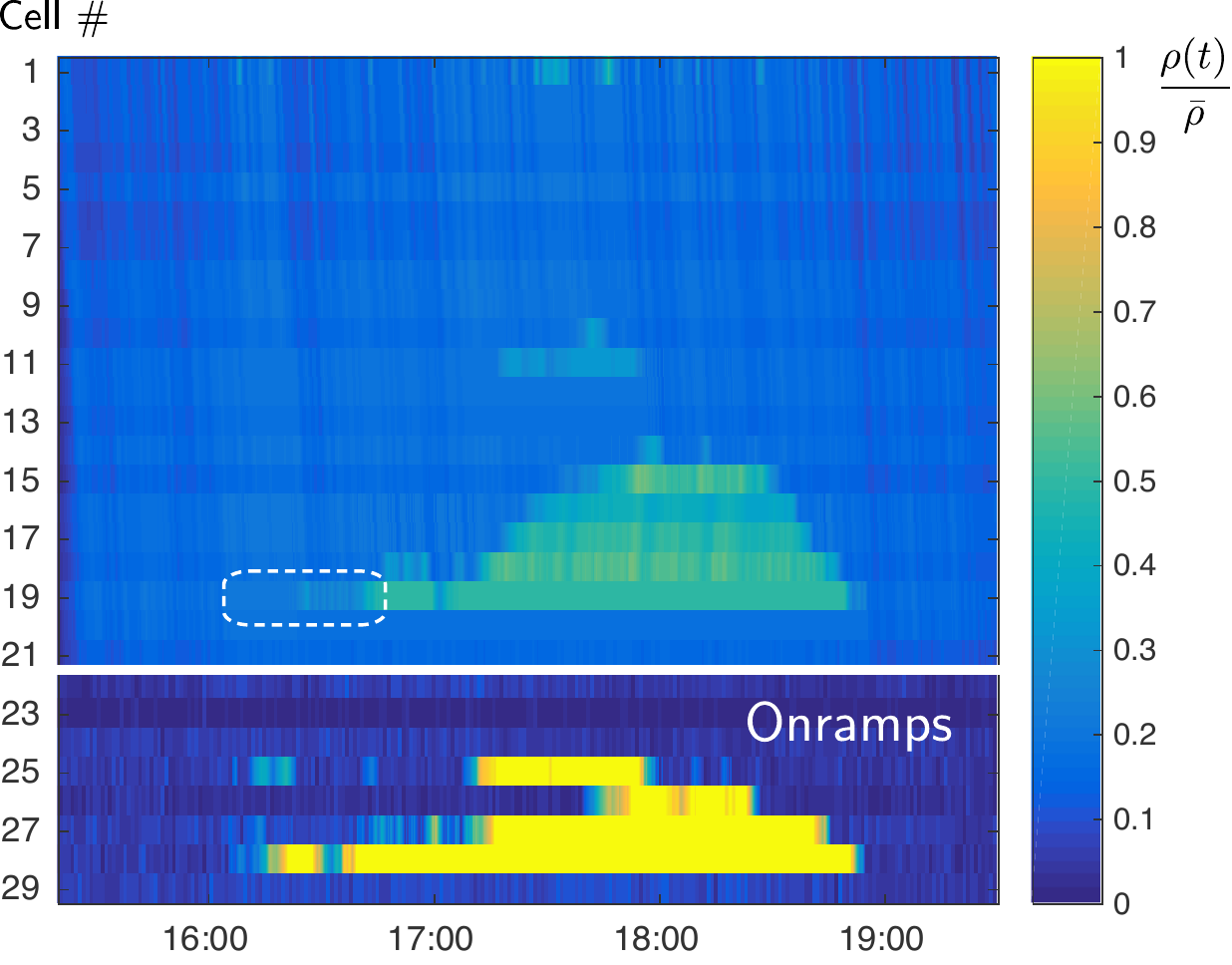}
		\caption{Densities, receding horizon control with heuristic objective ($\TTS_\epsilon$).}
		\label{fig:grenoble_6}
	\end{subfigure} ~
	\caption{ Simulation results for the Rocade Sud using the alternative model with capacity drop, for the afternoon/evening rush-hour of April 14th, 2014. The heuristic objective is designed to favor queues on the onramps over congestion on the mainline. In this particular example, the onset of congestion in cell $e_{19}$ is delayed by the heuristic, which delays the onset of the capacity drop and achieves higher throughput in the period indicated by the white-dashed box.  }
	\label{fig:grenoble_nm}
\end{figure} 

We now compare the performance of receding horizon controllers based on either objective function. The capacity drop cannot be included in the controller model if monotonicity is to be retained. Instead, a non-decreasing demand function is assumed by the controller, with maximal value equal to the average of free-flow throughput and throughput in congestion.\footnote{ A possible alternative choice, to set throughput in the controller model to the maximal, free-flow throughput, leads to even better performance of the policy using the heuristic objective in simulation. The resulting trajectory seems unrealistic, however, since in practice, model uncertainty prevents any controller from stabilizing the flow exactly at the critical density, where maximal, free-flow throughput is achieved. Choosing the maximal throughput in the controller model as the average of free-flow throughput and throughput in congestion acts as a safety margin. } Due to the model mismatch between system and controller, we consider a receding horizon implementation, where an optimization problem with horizon $10$min is solved every $2$min and the control inputs of the first $2$min of the prediction horizon are applied to the system. Density contours of the results are depicted in Figure \ref{fig:grenoble_nm}. The receding horizon controller using the default objective achieves an improvement of $2.8$\% in TTS ($10.1\%$ in delay), while the controller using the heuristic objective with $\epsilon = 0.1$ achieves an improvement of $9.68\%$ in TTS ($35.6\%$ in delay). This improvement in performance is due to the heuristic objective initially holding back more traffic on the onramps, thereby delaying the onset of congestion. This allows a larger flow out of cell $e_{20}$ for the time period indicated in Figure \ref{fig:grenoble_nm}, since the capacity drop is avoided temporarily. Note that smaller throughput before the onset of congestion leads to longer congestion queues throughout the congestion, and hence to a large increase in total waiting times. In this example, the capacity drop influences the effectiveness of ramp metering significantly, suggesting that it should not be ignored in the design of ramp metering control policies. However, the results also demonstrate that the proposed heuristic is able to target the effects of the capacity drop and improve performance, while employing a monotone system model in the optimization which does not include the capacity drop explicitly. While this particular heuristic is tailored towards ramp metering and does not generalize to e.g.\ symmetric junctions in an obvious manner, we believe it exemplifies how the main result of this work can also be used as a guideline in the design of ``efficient" heuristics that target non-monotone effects.

\subsection{General network with controlled merging junctions} \label{sec:ex_network} 

\begin{figure}[t] 
	\centering
		\includegraphics[width=\textwidth]{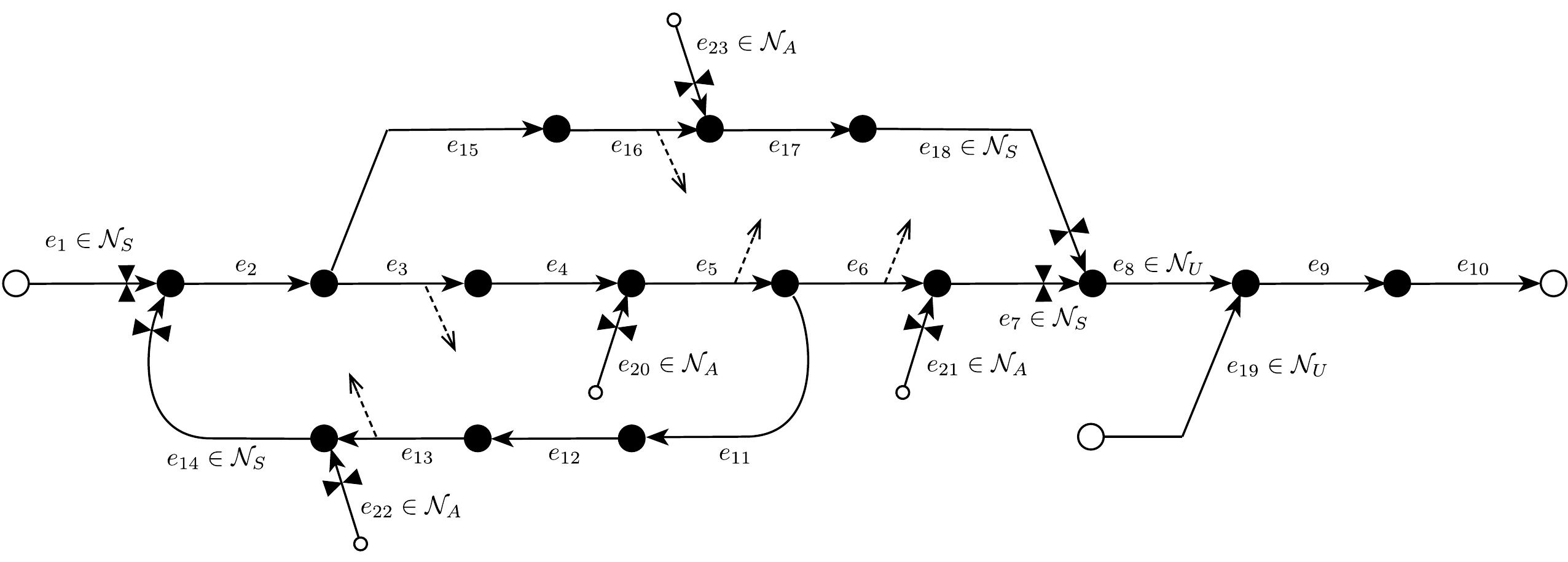}
		\caption{Artificial network topology with 23 cells, two symmetric junctions, four asymmetric junctions modeling metered onramps, one sub-critical junction, two mainline FIFO diverging junctions and five FIFO offramp junctions.}
	\label{fig:ex_network}
\end{figure} 

\begin{figure}[t] 
	\centering
	\begin{subfigure}[b]{0.32\textwidth}
		\includegraphics[width=\textwidth]{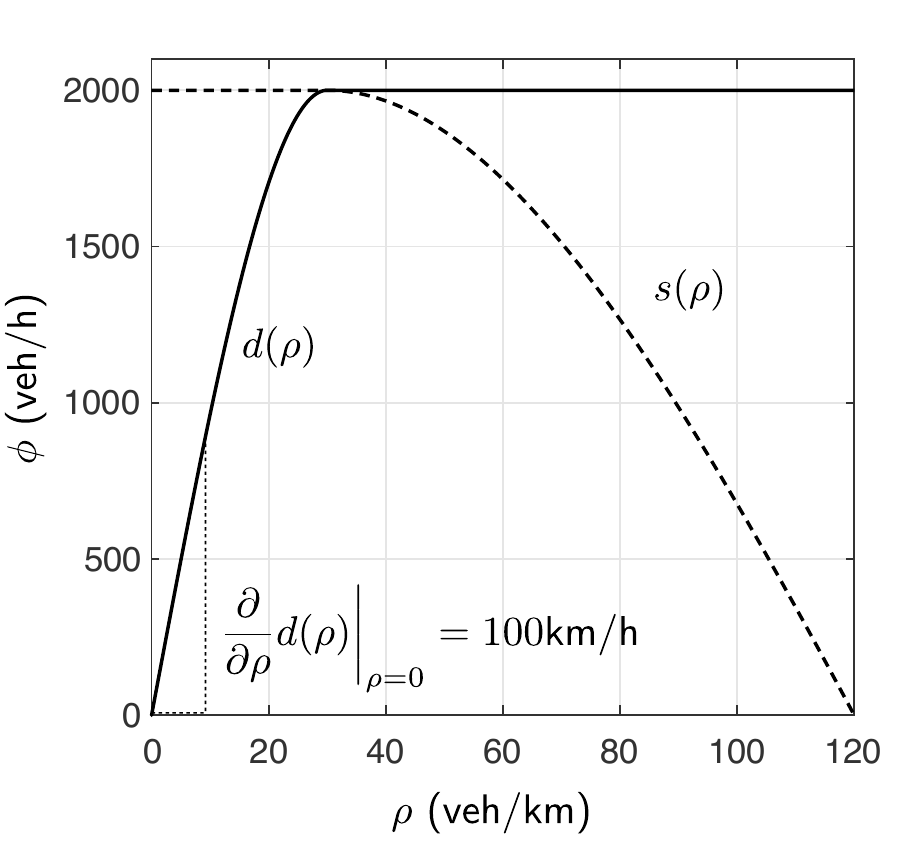}
		\caption{Concave demand and supply functions of a single lane, modeled using 3$^\text{rd}$-order polynomials.}
		\label{fig:nonlinear_fd}
	\end{subfigure} ~ 
	\begin{subfigure}[b]{0.32\textwidth}
		\includegraphics[width=\textwidth]{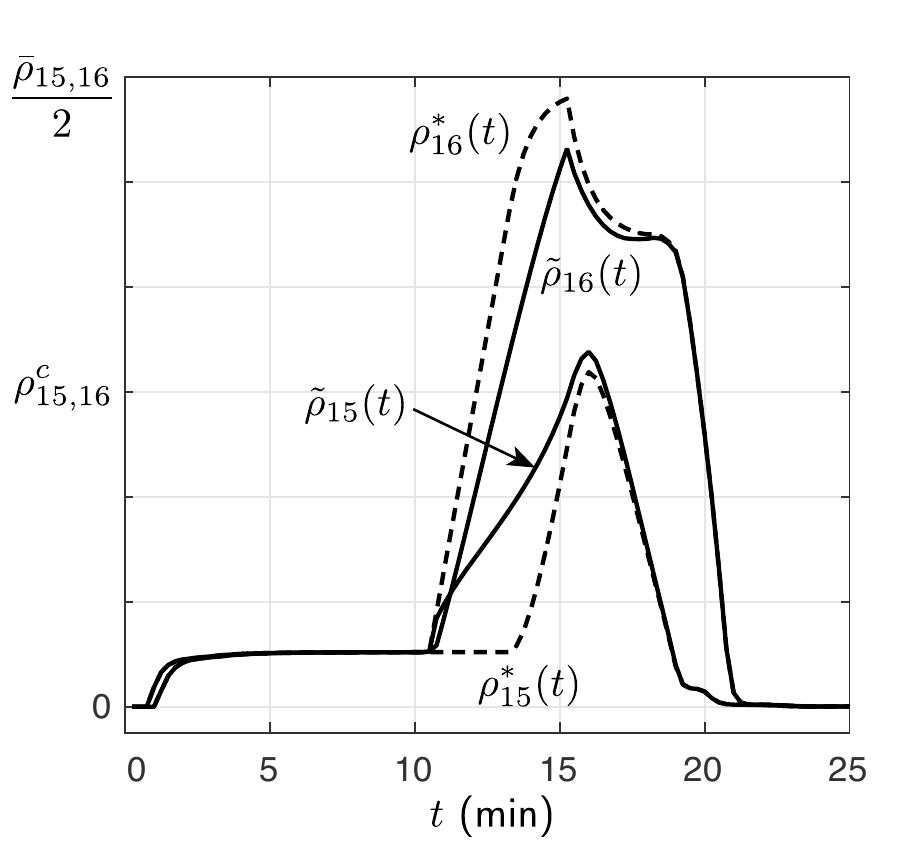}
		\caption{Solution of the relaxed problem $\tilde \rho_{15,16}(t)$ in comparison to the result of the forward simulation $\rho^*_{15,16}(t)$.}
		\label{fig:cell_15_16}
	\end{subfigure} ~
		\begin{subfigure}[b]{0.32\textwidth}
		\includegraphics[width=\textwidth]{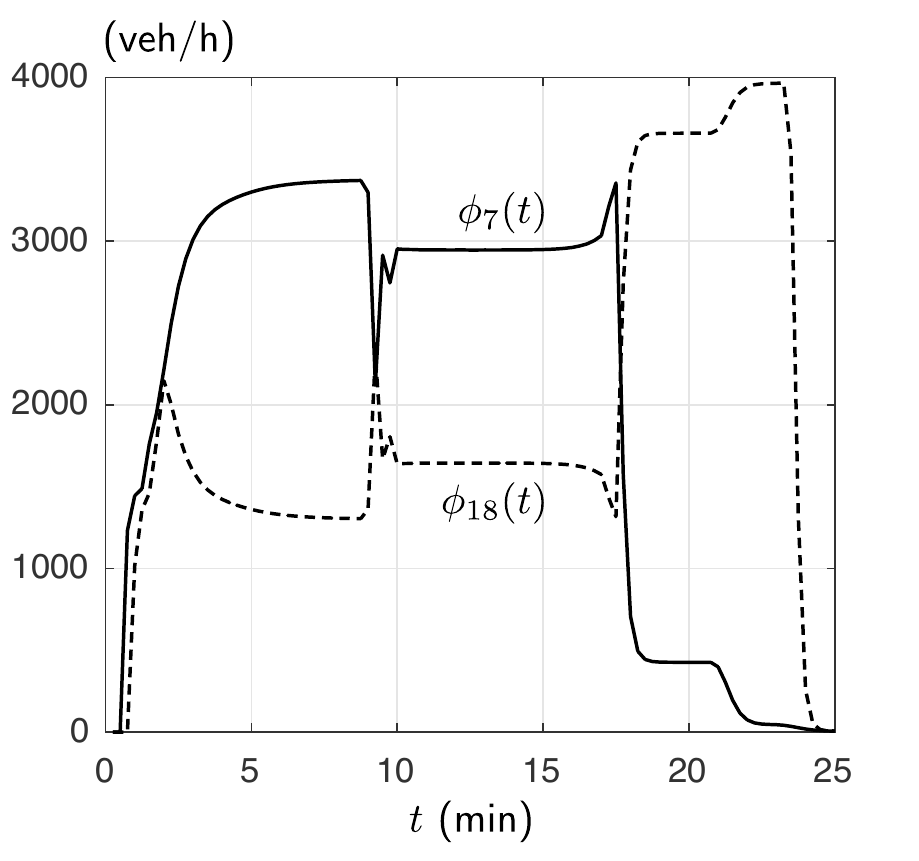}
		\caption{Controlled flows in the symmetric junction, where $\phi_{7}(t)$ and $\phi{18}(t)$ merge.}
		\label{fig:symmetric_flow}
	\end{subfigure} 
	
	\vspace{0.3cm}
	
	\begin{subfigure}[b]{0.48\textwidth}
		\includegraphics[width=\textwidth]{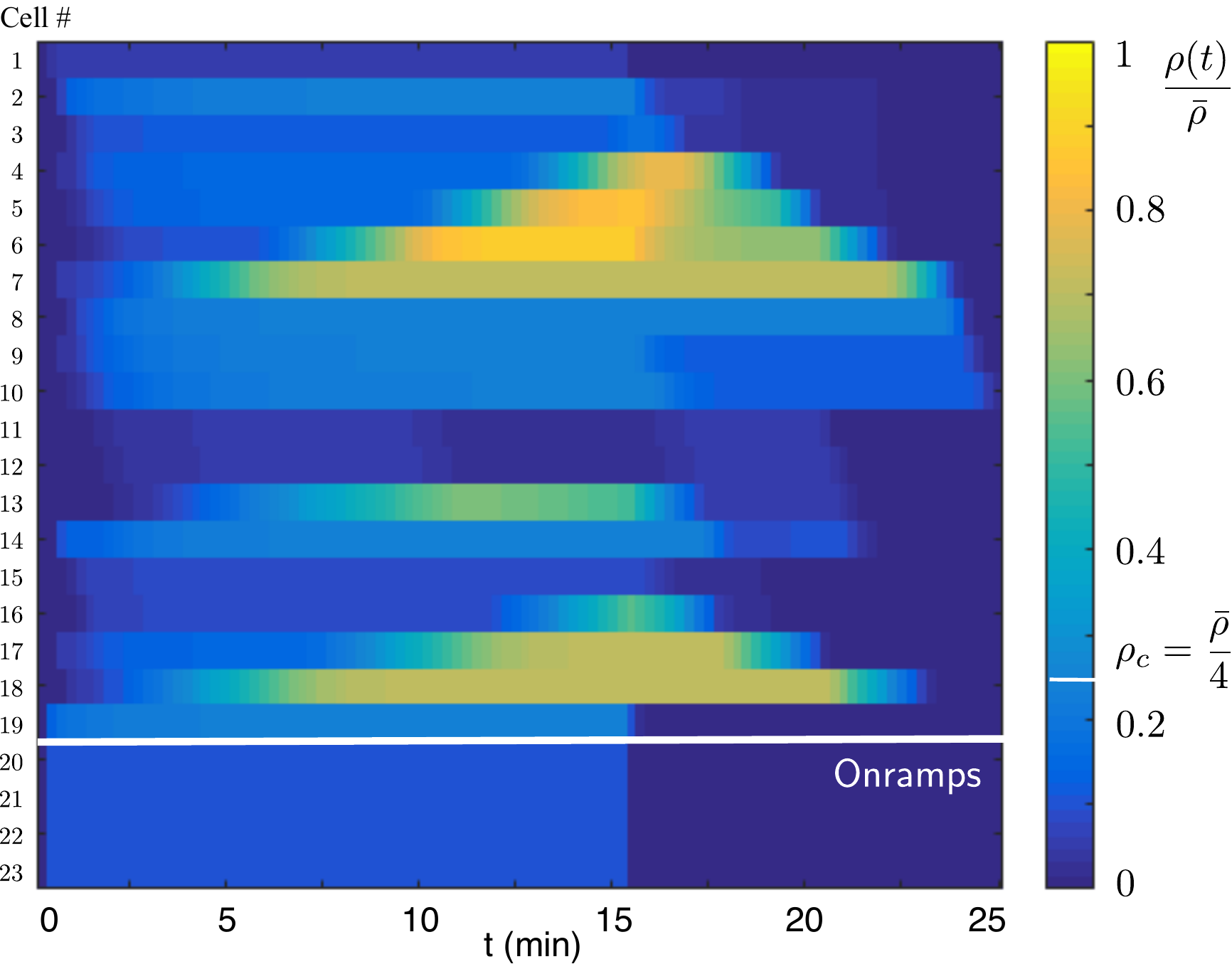}
		\caption{Densities without demand control, using the proportional-priority merging model.}
		\label{fig:network_ol}
	\end{subfigure}  ~ ~ 
	\begin{subfigure}[b]{0.48\textwidth}
		\includegraphics[width=\textwidth]{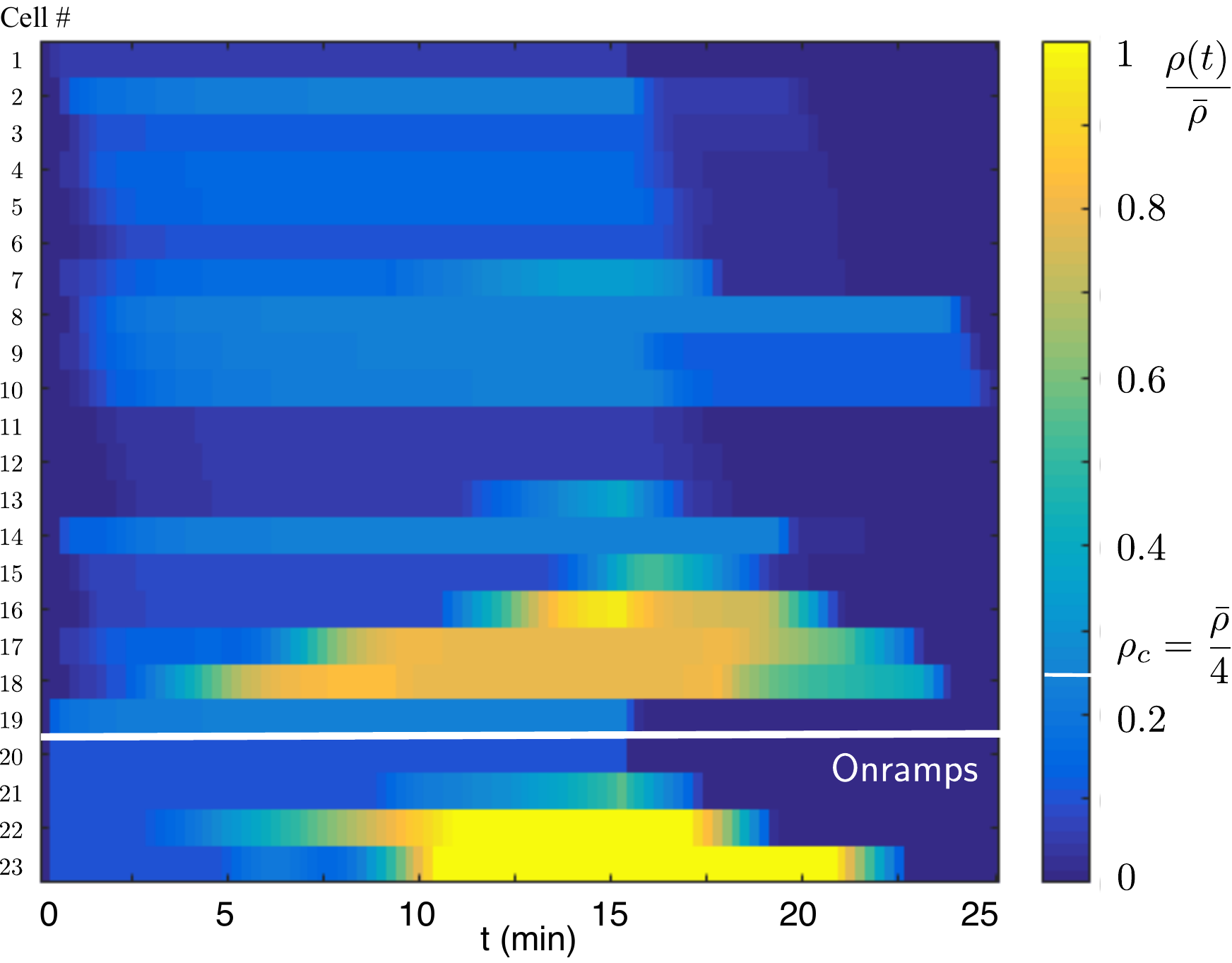}
		\caption{Densities obtained by forward simulation using optimal control inputs.}
		\label{fig:network_cl}
	\end{subfigure}
	\caption{Simulation results for the network depicted in Figure \ref{fig:ex_network}. The main bottleneck is the merging junction in cell $8$. The optimal control alleviates the congestion forming from this bottleneck, mainly by ramp metering using ramps $21$ and $23$. In addition, preference is given to flows from cell $7$ (as opposed to cell $18$, see Figure \ref{fig:symmetric_flow}), where congestion threatens to block several upstream FIFO diverging junctions. }
	\label{fig:simulation_grenoble}
\end{figure} 

In the following, we consider the freeway network depicted in Figure \ref{fig:ex_network}. We aim to minimize TTS by using ramp metering and mainline demand control for merging junctions. For ramp metering, we assume that the onramp model described in the previous section is used. For merging junctions which are not onramp junctions, we use the symmetric junction model. We first establish that control of merging flows in such a symmetric junction can be realized via demand control. To this end, we follow the notation of \cite{como2016convexity} and introduce demand control inputs $\alpha_e(t) \in [0,1]$ for cells $e \in \N_S \cup \N_A$. The controlled demand of such a cell is defined as
\begin{equation*}
d_e^c \big( \rho_e(t) \big) = \alpha_e(t) \cdot \de .
\end{equation*}
The controlled demand replaces the uncontrolled demand $d_e( \rho_e(t) )$ in the system equations. Now, assume a desired, controlled flow $\phi^*_e(t)$ has been computed by solving the relaxed FNC problem \eqref{eq:problem_relaxed}. Then, set $\alpha_e(t) := \frac{\phi_e^*(t)}{d_e ( \rho_e(t) )}$. Since constraints in the relaxed FNC problem ensure that controlled flows never exceeds the supply of free space, the merging junction is always in free-flow and the desired controlled flows are achieved. This result holds for both the proportional-priority merging model \cite{kurzhanskiy2010active,coogan2016stability} and Daganzo's priority rule \cite{daganzo1995cell}, since the models only differ in congestion. In \cite{muralidharan2012optimal}, variable speed limits are suggested for demand control. In particular, the controlled demand of a cell with variable speed limit $v_e(t)$ is given as
\begin{align*}
d^c_e \big( \rho_e(t) , v_e(t) \big) = \min \big( \de, v_e(t) \cdot \rho_e(t) \big) ~,
\end{align*}
assuming perfect compliance with the speed limit. By choosing $v^*_e(t) = \frac{\phi^*_e(t)}{\rho_e(t)}$, flow control can be realized. \\

In the network depicted in Figure \ref{fig:ex_network}, two mainline junctions are modeled as symmetric junctions. We assume that demand control as outlined above is used in the corresponding cells $e \in \N_S = \{ e_1, e_7, e_{14}, e_{18} \}$. Four onramps ($e_{20}$, $e_{21}$, $e_{22}$ and $e_{23}$) are present; we assume that they are all used for ramp metering. We model the onramps as asymmetric junctions and use the onramp model presented in the previous section, which replicates a simple integrator behavior for onramps with limited capacity. Condition \eqref{eq:asmerge_assumption}, that onramp demand can always be served, is checked a posteriori. One merging junction is modeled as a sub-critical junction ($e_8, e_{19} \in \N_U$). The network contains two FIFO diverging junctions (downstream of $e_2$ and $e_5$) within the network in addition to several off-ramp junctions, which can be interpreted as special cases of FIFO diverging junctions where the off-ramp is never congested. The corresponding turning rates are $\beta_{3,2} = \frac{2}{3}$, $\beta_{15,2} = \frac{1}{3}$, $\beta_{4,3} = \frac{4}{5}$, $\beta_{6,5} = \frac{1}{2}$, $\beta_{11,5} = \frac{1}{4}$, $\beta_{7,6} = \frac{4}{5}$, $\beta_{8,7} = \frac{4}{5}$, $\beta_{14,13} = \frac{4}{5}$ and $\beta_{17,16} = \frac{4}{5}$. For mainline cells (all cells except the onramps) we assume that every lane is described by the same fundamental diagram. To demonstrate the applicability of our result to concave demand and supply functions, we model the demand function of a single lane by a third-order polynomial $d(\rho) = d_3 \rho^3 + d_2 \rho^2 + d_1 \rho + d_0$ for $0 \leq \rho \leq \rho^c$, where the coefficients are chosen such that $d(0) = 0$cars/h, $\frac{d}{d\rho} d( 0 ) = 100$km/h, $d(\rho^c) = 2000$cars/h and $\frac{d}{d \rho} d( \rho^c ) = 0$km/h. Here, $\rho^c$ denotes the critical density, which is chosen as $\rho^c = 30$cars/km per lane. For $\rho^c \leq \rho \leq \bar\rho$, the demand function is constant. Similarly, the supply function is modeled by a third-order polynomial $s(\rho) = s_3 \rho^3 + s_2 \rho^2 + s_1 \rho + s_0$ for $\rho^c \leq \rho \leq \bar\rho$, where the coefficients are chosen such that $s(\rho^c) = 2000$cars/h, $\frac{d}{d \rho} s( \rho^c) = 0$km/h, $s(\bar\rho) = 0$ and $\frac{d}{d \rho} s( \bar\rho ) = -35$km/h. For $0 \leq \rho \leq \rho^c$, the supply function is constant. The resulting demand and supply functions are depicted in Figure \ref{fig:nonlinear_fd}. One can numerically verify that it satisfies Assumption \ref{assumption:fd}. The number of lanes differs between cells. We assume cells $e_1$, $e_2$, $e_3$, $e_4$, $e_5$, $e_9$ and $e_{10}$ are comprised of three lanes, cell $e_{14}$ of only one lane and the remainder of the mainline cells of two lanes. All cells are $0.5$km long and the sampling time is chosen as $15$sec to satisfy Assumption \ref{assumption:fd}. The network structure and the parameters are chosen to exemplify the behavior of FIFO diverging junctions and symmetric junctions. 

We chose a simple piecewise-constant demand pattern in order to facilitate visual analysis of the results. Specifically we set $w_1(t) = 2000$cars/h, $w_{19}(t) = 1000$cars/h and $w_{20}(t) = w_{21}(t) = w_{22}(t) = w_{23}(t) = 750$cars/h for $1\leq t \leq 60$ respectively, and zero otherwise. Simulation results without control are depicted in Figure \ref{fig:network_ol}. In this simulation, we use the proportional-priority  merging rule as in Example \ref{example:merge} for the symmetric junctions and do not restrict the flow from onramps onto the mainline. The merging junction upstream of cell $e_8$ is the major bottleneck of this network, causing congestion to form in cells $e_7$ to $e_4$ and in cells $e_{18}$ to $e_{16}$. In addition, there is a minor bottleneck at cell $e_{14}$. We use the TTS as the performance metric and obtain a total cost of $\TTS_{\text{ol}} = 222.5$h.

Next, we solve the relaxed FNC problem \eqref{eq:problem_relaxed}. For the given network and a horizon of $T = 25$min, the resulting problem is a nonlinear, convex optimization problem with $5543$ primal variables and $8280$ constraints (in addition to lower and upper bounds on the primal variables). It is solved to optimality by IPOPT \cite{wachter2006implementation} using the L-BFGS algorithm in $34$sec\footnote{IPOPT was chosen due to the nonlinear constraints resulting from the nonlinear fundamental diagram. Recall that despite the nonlinearities, the relaxed problem is still convex. The solution was found using a 2013 MacBook Pro with 2.3GHz Intel i7 processor (4 cores). IPOPT was interfaced via Matlab.}. The optimizer is not unique and some cells of the optimizer found by IPOPT do not satisfy the non-relaxed fundamental diagram -- that is, some flows are strictly lower than the corresponding minimum of traffic demand and supply. Therefore, it is necessary to perform a forward simulation using the optimal control actions of the relaxed problem. The results of the forward simulation are depicted in Figure \ref{fig:network_cl}. The cost achieved in this simulation is $\TTS^* = 201.1$h, equal to the optimal cost of the relaxed FNC problem as predicted by Theorem \ref{theorem:CCTM}. In this case, differences between the particular solution of the relaxed problem and the results of the forward simulation occur for example in cells $e_{15}$ and $e_{16}$, as depicted in Figure \ref{fig:cell_15_16}. No off-ramp is present in cell $e_{15}$ and therefore, there is no reason to maximize the flow from cell $e_{15}$ to cell $e_{16}$ as long as cell $e_{16}$ is congested. 

The different objective values for the uncontrolled and the controlled case show an improvement of $\frac{\TTS_{ol} - \TTS^*}{\TTS_{ol}} = 9.6\%$. The relative savings in terms of delay are $\frac{ \TTS_{\text{ol}} - \TTS^*}{ \TTS_{\text{ol}} - \text{FTT}} = 16.9\%$, for a free-flow speed of $100$km/h on an empty freeway (Figure \ref{fig:nonlinear_fd}). The difference can be attributed to the elimination of congestion in cells $e_4$ to $e_7$ in the controlled case, as depicted in Figures \ref{fig:network_ol} and \ref{fig:network_cl}. This is mainly achieved by prioritizing the flow from cell $e_7$ over the flow from $e_{18}$ in the downstream merging junction, as depicted in Figure \ref{fig:symmetric_flow}. The congestion in the uncontrolled case restricts off-ramp flows in cells $e_5$ and $e_6$, as well as the flows to cell $e_{11}$ downstream of the FIFO diverging junction. In addition, ramps $e_{22}$ and $e_{23}$ (and to a limited extent, ramp $e_{21}$) engage in ramp metering (see Figure \ref{fig:network_cl}) to prevent or reduce local congestion, such that the flows from the respective upstream off-ramps are not obstructed.

\section{Conclusions} \label{sec:conclusions}  

We have demonstrated that the CTM with controlled merging junctions can be equivalently represented as a concave and state-monotone system. Interestingly, the FIFO model for diverging junctions is monotone in this alternative representation, even though its dynamics are not state-monotone in the densities. Concavity and state monotonicity of the dynamics have been used to derive an exact, convex relaxation that allows for the efficient solution of finite-horizon optimal control problems. Thereby, we generalize existing results which have shown that the ``natural" relaxation of certain FNC problems can be used to compute an optimal solution of the non-relaxed problem, but under more restrictive assumptions on the model. The main result in this work is based on a novel characterization of the system dynamics, which allows to generalize the results to modifications of the system dynamics as long as these modifications preserve concavity and state monotonicity in terms of the cumulative flows. For example, onramp capacity constraints were added to the model in Section \ref{sec:grenoble}. It was sufficient to verify that the additional constraints are concave and state-monotone in the cumulative flows to extend the proof of exactness of the convex relaxation of the FNC to this case. Unfortunately, we have also seen that the arguably most relevant extension, the case of uncontrolled, proportional-priority merging junctions, does not satisfy these properties.

Future research might focus on such uncontrolled merging junctions nevertheless. In particular, it seems reasonable to ask whether it is possible to find ``partial" relaxations of the FNC problem with only ``few" nonconvex constraints relating to the uncontrolled merging junctions that are tractable to solve numerically, but can be used to compute a solution of the original problem efficiently.
Ongoing work also considers the impact of model uncertainty, in particular uncertainty in the fundamental diagram and in predictions of future, external demand. Even in the presence of bounded uncertainty sets, the worst-case uncertainty realization is hard to determine for traffic networks that are not monotone or admit a monotone reformulation \cite{kurzhanskiy2012guaranteed}. However, preliminary results suggest that networks with controlled merging junctions admit a reformulation which allows to determine the worst-case uncertainty realization, thereby making certain robust optimal control problems tractable.
In addition, only few examples of convex-monotone systems have been reported in the literature. The results in this work raise 
the possibility that there exist other relevant systems which can be transformed into an equivalent, convex-monotone (or concave, state-monotone) form.

\section*{Acknowledgements}
Research was partially supported by the European Union 7th Framework Programme ``Scalable Proactive Event-Driven Decision-making (SPEEDD)" (FP7-ICT 619435). We are grateful to the Grenoble Traffic Lab at INRIA, Grenoble, France for providing the traffic data used in the numerical study presented in Section \ref{sec:grenoble}.

\FloatBarrier
\bibliography{/Users/mschmitt/Documents/Docs_Latex/MS_bibliography}

\begin{thebibliography}{10}

\bibitem{angeli2003monotone}
David Angeli and Eduardo~D Sontag.
\newblock Monotone control systems.
\newblock {\em IEEE Transactions on Automatic Control}, 48(10):1684--1698,
  2003.

\bibitem{boyd2004convex}
Stephen~Poythress Boyd and Lieven Vandenberghe.
\newblock {\em Convex optimization}.
\newblock Cambridge University Press, 2004.

\bibitem{de2015grenoble}
Carlos Canudas~de Wit, Fabio Morbidi, Luis~Leon Ojeda, Alain~Y. Kibangou, Iker
  Bellicot, and Pascal Bellemain.
\newblock Grenoble traffic lab: An experimental platform for advanced traffic
  monitoring and forecasting.
\newblock {\em IEEE Control Systems}, 35(3):23--39, 2015.

\bibitem{como2015throughput}
Giacomo Como, Enrico Lovisari, and Ketan Savla.
\newblock Throughput optimality and overload behavior of dynamical flow
  networks under monotone distributed routing.
\newblock {\em IEEE Transactions on Control of Network Systems}, 2(1):57--67,
  2015.

\bibitem{como2016convexity}
Giacomo Como, Enrico Lovisari, and Ketan Savla.
\newblock Convexity and robustness of dynamic network traffic assignment for
  control of freeway networks.
\newblock {\em Transportation Research. Part B: Methodological}, 91:446--465,
  2016.

\bibitem{como2013robust2}
Giacomo Como, Ketan Savla, Daron Acemoglu, Munther~A Dahleh, and Emilio
  Frazzoli.
\newblock Robust distributed routing in dynamical networks---part i: Locally
  responsive policies and weak resilience.
\newblock {\em IEEE Transactions on Automatic Control}, 58(2):317--332, 2013.

\bibitem{como2013robust1}
Giacomo Como, Ketan Savla, Daron Acemoglu, Munther~A Dahleh, and Emilio
  Frazzoli.
\newblock Robust distributed routing in dynamical networks--part ii: Strong
  resilience, equilibrium selection and cascaded failures.
\newblock {\em IEEE Transactions on Automatic Control}, 58(2):333--348, 2013.

\bibitem{coogan2015compartmental}
Samuel Coogan and Murat Arcak.
\newblock A compartmental model for traffic networks and its dynamical
  behavior.
\newblock {\em IEEE Transactions on Automatic Control}, 60(10):2698--2703,
  2015.

\bibitem{coogan2016stability}
Samuel Coogan and Murat Arcak.
\newblock Stability of traffic flow networks with a polytree topology.
\newblock {\em Automatica}, 66:246--253, 2016.

\bibitem{daganzo1994cell}
Carlos~F. Daganzo.
\newblock The cell transmission model: A dynamic representation of highway
  traffic consistent with the hydrodynamic theory.
\newblock {\em Transportation Research Part B: Methodological}, 28(4):269--287,
  1994.

\bibitem{daganzo1995cell}
Carlos~F. Daganzo.
\newblock The cell transmission model, part ii: network traffic.
\newblock {\em Transportation Research Part B: Methodological}, 29(2):79--93,
  1995.

\bibitem{gomes2006optimal}
Gabriel Gomes and Roberto Horowitz.
\newblock Optimal freeway ramp metering using the asymmetric cell transmission
  model.
\newblock {\em Transportation Research Part C: Emerging Technologies},
  14(4):244--262, 2006.

\bibitem{gomes2008behavior}
Gabriel Gomes, Roberto Horowitz, Alex~A Kurzhanskiy, Pravin Varaiya, and
  Jaimyoung Kwon.
\newblock Behavior of the cell transmission model and effectiveness of ramp
  metering.
\newblock {\em Transportation Research Part C: Emerging Technologies},
  16(4):485--513, 2008.

\bibitem{gurobi}
Inc. Gurobi~Optimization.
\newblock Gurobi optimizer reference manual, 2016.

\bibitem{hirsch2005monotone}
MW~Hirsch and H~Smith.
\newblock Monotone maps: A review.
\newblock {\em Journal of Difference Equations and Applications},
  11(4-5):379--398, 2005.

\bibitem{kontorinaki2017first}
Maria Kontorinaki, Anastasia Spiliopoulou, Claudio Roncoli, and Markos
  Papageorgiou.
\newblock First-order traffic flow models incorporating capacity drop: Overview
  and real-data validation.
\newblock {\em Transportation Research Part B: Methodological}, 106:52--75,
  2017.

\bibitem{kurzhanskiy2010active}
Alex~A Kurzhanskiy and Pravin Varaiya.
\newblock Active traffic management on road networks: a macroscopic approach.
\newblock {\em Philosophical Transactions of the Royal Society of London A:
  Mathematical, Physical and Engineering Sciences}, 368(1928):4607--4626, 2010.

\bibitem{kurzhanskiy2012guaranteed}
Alex~A Kurzhanskiy and Pravin Varaiya.
\newblock Guaranteed prediction and estimation of the state of a road network.
\newblock {\em Transportation Research Part C: Emerging Technologies},
  21(1):163--180, 2012.

\bibitem{lighthill1955kinematic}
Michael~J. Lighthill and Gerald~Beresford Whitham.
\newblock On kinematic waves ii. a theory of traffic flow on long crowded
  roads.
\newblock {\em Proceedings of the Royal Society of London A: Mathematical,
  Physical and Engineering Sciences}, 229(1178):317--345, 1955.

\bibitem{lovisari2014stability}
Enrico Lovisari, Giacomo Como, Anders Rantzer, and Ketan Savla.
\newblock Stability analysis and control synthesis for dynamical transportation
  networks.
\newblock {\em arXiv preprint arXiv:1410.5956}, 2014.

\bibitem{lovisari2014stability2}
Enrico Lovisari, Giacomo Como, and Ketan Savla.
\newblock Stability of monotone dynamical flow networks.
\newblock In {\em IEEE Conference on Decision and Control (CDC)}, pages
  2384--2389, 2014.

\bibitem{muralidharan2012optimal}
Ajith Muralidharan and Roberto Horowitz.
\newblock Optimal control of freeway networks based on the link node cell
  transmission model.
\newblock In {\em American Control Conference (ACC)}, pages 5769--5774. IEEE,
  2012.

\bibitem{papageorgiou2003review}
Markos Papageorgiou, Christina Diakaki, Vaya Dinopoulou, Apostolos Kotsialos,
  and Yibing Wang.
\newblock Review of road traffic control strategies.
\newblock {\em Proceedings of the IEEE}, 91(12):2043--2067, 2003.

\bibitem{rantzer2014control}
Anders Rantzer and Bo~Bernhardsson.
\newblock Control of convex-monotone systems.
\newblock In {\em IEEE Conference on Decision and Control (CDC)}, pages
  2378--2383, 2014.

\bibitem{richards1956shock}
Paul~I Richards.
\newblock Shock waves on the highway.
\newblock {\em Operations Research}, 4(1):42--51, 1956.

\bibitem{schmitt2017convex}
Marius Schmitt, Chithrupa Ramesh, Paul Goulart, and John Lygeros.
\newblock Convex, monotone systems are optimally operated at steady-state.
\newblock In {\em IEEE American Control Conference (ACC)}, pages 2662--2667,
  2017.

\bibitem{schmitt2017sufficient}
Marius Schmitt, Chithrupa Ramesh, and John Lygeros.
\newblock Sufficient optimality conditions for distributed, non-predictive ramp
  metering in the monotonic cell transmission model.
\newblock {\em Transportation Research Part B: Methodological}, 105:401 -- 422,
  2017.

\bibitem{varaiya2013max}
Pravin Varaiya.
\newblock The max-pressure controller for arbitrary networks of signalized
  intersections.
\newblock In {\em Advances in Dynamic Network Modeling in Complex
  Transportation Systems}, pages 27--66. Springer, 2013.

\bibitem{wachter2006implementation}
Andreas W{\"a}chter and Lorenz~T Biegler.
\newblock On the implementation of an interior-point filter line-search
  algorithm for large-scale nonlinear programming.
\newblock {\em Mathematical Programming}, 106(1):25--57, 2006.

\bibitem{ziliaskopoulos2000linear}
Athanasios~K. Ziliaskopoulos.
\newblock A linear programming model for the single destination system optimum
  dynamic traffic assignment problem.
\newblock {\em Transportation Science}, 34(1):37--49, 2000.

\end{thebibliography}

\FloatBarrier

\begin{appendix}

\section{Proof of Lemma \ref{lemma:invariance}} 
\label{appendix:lemma_invariance}

\begin{proof}
For all states $\rho(t) \in \mathbb{P}$, demand and supply functions are non-negative according to Assumption \ref{assumption:fd}. This implies that the candidate input $\phi_e(t) = 0$ for all $e \in \N_S$ is always feasible in both \eqref{eq:ctm_constraints_demand} and \eqref{eq:ctm_constraints_supply}. 

We first consider networks without asymmetric junctions. In such networks, all flows are non-negative $\phi_e(t) \geq 0$ for all $e$ according to \eqref{eq:ctm_flow_a}, \eqref{eq:ctm_flow_b} and \eqref{eq:ctm_constraints_demand}. In turn, this implies that for all states $\rho(t) \in \mathbb{P}$ and all feasible inputs,
\begin{align*}
\rho_e(t+1) \geq \rho_e(t) - \frac{\Delta t}{l_e} \phi_e(t) \geq \rho_e(t) - \frac{\Delta t}{l_e} \de \geq \rho_e(t) + \frac{\Delta t}{l_e} \cdot \frac{l_e}{\Delta t} \rho_e(t) = 0 .
\end{align*}
Here, the second inequality follows from constraints \eqref{eq:ctm_flow_a}, \eqref{eq:ctm_flow_b} or \eqref{eq:ctm_constraints_demand}, depending on the cell. The third inequality follows from Lipschitz continuity according to Assumption \ref{assumption:fd}. Similarly,
\begin{align*}
\rho_e(t+1) \leq \rho_e(t) + \frac{\Delta t}{l_e} \cdot \sum_{i \in \E} \beta_{e,i} \phi_i(t) \leq \rho_e(t) + \frac{\Delta t}{l_e} \cdot \se \leq \rho_e(t) + \frac{\Delta t}{l_e} \cdot \frac{l_e}{\Delta t} \big( \bar\rho_e - \rho_e(t) \big) = \bar\rho_e
\end{align*}
for all $e \notin \mathcal{S}$ and $\tau_e \notin \M_U$. Here, the second inequality follows from \eqref{eq:ctm_flow_a}, \eqref{eq:ctm_constraints_supply} and the fact that $w_e(t) = 0$ for all $e \notin \mathcal{S}$. The third inequality follows from Lipschitz continuity according to Assumption \ref{assumption:fd}. Source cells $e \in \mathcal{S}$ and cells immediately downstream of sub-critical junctions $e: \tau_e \in \M_U$ have infinite capacity $\bar\rho_e = + \infty$ according to Assumption \ref{assumption:merges}, which completes the proof for all networks without onramp junctions. 

For networks with asymmetric junctions, we have the additional a priori assumption that states $\rho(t)$ with $s_i \big( \rho_i(t) \big) < \beta_{i,j} \cdot d_j \big( \rho_j(t) \big)$ for some $i \in \E$ and $j \in \N_A$ are unreachable. Hence,
\begin{align*}
\phi_e(t) \geq \min \Bigg\{ 0,  ~\underset{i \in \E^+(e)}{\min} \bigg\{ \frac{ s_i \big( \rho_i(t) \big) - \sum_{j \in \N_A} \beta_{i,j} \cdot \phi_j(t) }{ \beta_{i,e}  } \bigg\}  \Bigg\} \geq 0
\end{align*}
also holds for all $e \in \N_M$. This implies $\rho_e(t+1) \geq 0$ for all $e \in \E$ by the same reasoning as for networks without asymmetric junctions. Furthermore, 
\begin{align*}
\rho_e(t+1) \leq \rho_e(t) + \frac{\Delta t}{l_e} \cdot \bigg( \beta_{i,e} \cdot \frac{s_i \big( \rho_i(t) \big) - \beta_{i,j} \cdot \phi_j(t)}{\beta_{i,e}} + \beta_{i,j} \cdot \phi_{j,i}(t) \bigg) \leq \rho_e(t) + \frac{\Delta t}{l_e} \cdot \frac{l_e}{\Delta t} \big( \bar\rho_e - \rho_e(t) \big) = \bar\rho_e 
\end{align*}
for all cells $e \in \N_M$, which completes the proof.
\end{proof}

\section{Monotonicity proofs} 
This part of the Appendix provides proofs of state-monotonicity and concavity of various functions.

\subsection{Proof of Lemma \ref{lemma:demand}}
\label{appendix:lemma_demand}

\begin{proof} 
We first verify concavity. The density $\rho_e(t) = \rho_e(0) + \frac{1}{l_e} \cdot \left( \sum_{i \in \E} \beta_{e,i} \Phi_i(t) ~ - \Phi_e(t) + W_e(t) \right) $ is an affine function of $\Phi(t)$. The demand $\de $ is concave in $\rho_e(t)$ by Assumption \ref{assumption:fd} and hence, it is concave in $\Phi(t)$. The cumulative demand $D_e \big( \Phi(t) \big) = \Phi_e(t) + \Delta t \cdot \de$ is the non-negative sum of two concave functions and therefore, it is concave in $\Phi(t)$ \cite{boyd2004convex}. Next, we verify state monotonicity, by resorting to the basic definition of monotonicity.  For ease of notation, we will drop all time indices, i.e., write $\Phi_e$ instead of $\Phi_e(t)$.  In the following, assume $\Delta \Phi \geq 0$. We find
\begin{align*}
D_e \big(\Phi + \Delta \Phi \big) - D_e \big( \Phi \big) &= \Delta \Phi_e + \Delta t \cdot d_e \left( \rho_e(0) + \frac{\sum_{i \in \E} \beta_{e,i} ( \Phi_i + \Delta \Phi_i ) ~ - \Phi_e - \Delta \Phi_e + W_e}{l_e} \right) \\
 & \quad\quad - \Delta t \cdot d_e \left( \rho_e(0) + \frac{\sum_{i \in \E} \beta_{e,i} \Phi_i ~ - \Phi_e + W_e}{l_e} \right) \\
  &\geq \Delta \Phi_e + \Delta t \cdot d_e \left( \rho_e(0) + \frac{\sum_{i \in \E} \beta_{e,i} \Phi_i ~ - \Phi_e - \Delta \Phi_e + W_e}{l_e} \right) \\
 & \quad\quad - \Delta t \cdot d_e \left( \rho_e(0) + \frac{\sum_{i \in \E} \beta_{e,i} \Phi_{i} ~ - \Phi_e + W_e}{l_e} \right) \\
 & \geq \Delta \Phi_e - \Delta t \cdot \frac{\gamma_d}{l_e} \cdot \Delta \Phi_e \geq 0 ~,
\end{align*}
which proves monotonicity.
\end{proof} 

\subsection{Proof of Lemma \ref{lemma:supply}} 
\label{appendix:lemma_supply}

\begin{proof} 
Again, we first verify concavity. The density
\begin{align*}
\rho_i(t) = \rho_i(0) + \frac{1}{l_i} \cdot \bigg( \beta_{i,e} \Phi_e(t) + \sum_{j \in \N_A} \beta_{i,j} \Phi_{j}(t) ~ - \Phi_i(t) + W_i(t) \bigg) 
\end{align*} 
is an affine function of $\Phi(t)$. The supply function $s_i ( \cdot )$ is concave by Assumption \ref{assumption:fd} and hence, the term $s_{i,e} \big( \Phi(t) \big)$ is concave in $\Phi(t)$. The cumulative supply
\begin{align*}
S_{i,e} \big( \Phi(t) , \varphi(t) \big) &:= \Phi_e(t) + \frac{\Delta t}{\beta_{i,e}} \cdot \bigg( s_{i,e} \big( \Phi(t) \big) - \sum_{j \in \N_A} \beta_{i,j} \cdot \frac{ \varphi_j(t) - \Phi_j(t)}{\Delta t} \bigg)
\end{align*}
is given as the sum of concave functions and therefore, it is jointly concave in $\Phi(t)$ and $\varphi(t)$. To verify state monotonicity, we apply the basic definition of monotonicity in a similar manner as for the cumulative demand. In the following, we again drop the time indices for ease of notation and we assume $\Delta \Phi \geq 0$. For any cell $e \notin \N$ and a downstream cell $i$ such that $\beta_{i,e} \neq 0$ we find
\begin{align*}
S_{i,e} \big( \Phi + \Delta \Phi, \varphi \big) &- S_{i,e} \big( \Phi, \varphi \big) \\
 &= \Delta \Phi_e + \frac{\Delta t}{\beta_{i,e}} \cdot s_i \left(  
 \rho_i(0) + \frac{1}{l_i} \cdot \left( \beta_{i,e} \big( \Phi_e + \Delta \Phi_e \big) + \sum_{j \in \N_A} \beta_{i,j} \big( \Phi_j + \Delta \Phi_j \big)  ~ - \big( \Phi_i  + \Delta \Phi_i \big) + W_i \right) 
 \right) \\
 & \quad\quad - \frac{\Delta t}{\beta_{i,e}} \cdot s_i \left(  \rho_i(0) + \frac{1}{l_i} \cdot \left( \beta_{i,e} \Phi_e + \sum_{j \in \N_A} \beta_{i,j} \Phi_j ~ - \Phi_i + W_i \right) \right) + \frac{1}{\beta_{i,e}} \cdot \sum_{j \in \N_A} \beta_{i,j} \cdot \Delta \Phi_j(t) \\
  &\geq \Delta \Phi_e - \frac{\Delta t}{\beta_{i,e}} \cdot \gamma_s \cdot \frac{ \beta_{i,e} \Delta \Phi_e + \sum_{j \in \N_A} \beta_{i,j} \Delta \Phi_j }{l_i}  + \frac{1}{\beta_{i,e}} \cdot \sum_{j \in \N_A} \beta_{i,j} \cdot \Delta \Phi_j(t) ~ \geq 0 ~,
\end{align*}
which proves state monotonicity.
\end{proof} 

\subsection{State monotonicity of constraint \eqref{eq:csm_constraints_supply}} 
\label{appendix:proof_constraint_supply}

To verify state monotonicity, we drop the time indices for ease of notation and we assume $\Delta \Phi \geq 0$. For all $ e : \tau_e \in \M_S$, it follows analogously to the reasoning in \ref{appendix:lemma_supply} that
\begin{align*}
& \Delta t \cdot s_e \Bigg(    \rho_e(0) + \frac{1}{l_e} \bigg( \sum_{i \in \E} \beta_{e,i} \big( \Phi_i + \Delta \Phi_i  \big) - \Phi_e - \Delta \Phi_e + W_e \bigg) \Bigg) + \sum_{i \in \E} \beta_{e,i} \cdot \big( \Phi_i + \Delta \Phi_i - \varphi_i \big) \\
&\quad\quad - \Delta t \cdot s_e \Bigg(    \rho_e(0) + \frac{1}{l_e} \bigg( \sum_{i \in \E} \beta_{e,i} \Phi_i - \Phi_e + W_e \bigg) \Bigg) + \sum_{i \in \E} \beta_{e,i} \cdot \big( \Phi_i - \varphi_i \big) \\
& \geq  \Delta t \cdot s_e \Bigg(    \rho_e(0) + \frac{1}{l_e} \bigg( \sum_{i \in \E} \beta_{e,i} \big( \Phi_i + \Delta \Phi_i \big) - \Phi_e + W_e \bigg) \Bigg) + \sum_{i \in \E} \beta_{e,i} \cdot \Delta \Phi_i \\
&\quad\quad - \Delta t \cdot s_e \Bigg(    \rho_e(0) + \frac{1}{l_e} \bigg( \sum_{i \in \E} \beta_{e,i} \Phi_i - \Phi_e + W_e \bigg) \Bigg) \\
& \geq \bigg( \sum_{i \in \E} \beta_{e,i} \Delta \Phi_i \bigg) - \Delta t \cdot \frac{\gamma_s}{l_e} \cdot \bigg( \sum_{i \in \E} \beta_{e,i} \Delta \Phi_i \bigg) ~ \geq 0 ~,
\end{align*}
which proves state monotonicity.

\section{Equivalence of the relaxed problems} 
\label{appendix:backtransformation}

The aim of this section is to verify that the relaxed FNC problem \eqref{eq:problem_relaxed} is equivalent to the relaxed FNC in cumulative states \eqref{eq:problem_cctm_relaxed} in the sense that the former problem can be converted into the latter one by the transformation defined by the equations defining cumulative flows \eqref{eq:cumulative_flow} and cumulative inputs and vice versa by the transformation \eqref{eq:state_transformation}.

First, we have seen in Section \ref{sec:CCTM} that the objectives $\TTS = \sum_{t=1}^{T} \sum_{e \in \E} l_e \cdot \rho_e(t) = C_{\text{W}} - \sum_{t=1}^{T} \sum_{e \in \E} \hat c_e \Phi_e(t)$ are equivalent under the suggested transformation. Second, we show that the constraints are equivalent under the transformation as well. The relaxed system equations for the controlled, cumulative flows \eqref{eq:csm_flow_c} and \eqref{eq:csm_flow_d} are satisfied by definition of the cumulative flows and inputs and in the reverse direction, the conservations law \eqref{eq:ctm_density} is satisfied because of the transformation of cumulative flows into densities \eqref{eq:density_transformation}. In the same manner as in the proof of Lemma \ref{lemma:CCTM}, one can show that the remaining equations defining the relaxed CCTM are equivalent to the remaining equations of the CTM 
\begin{subequations}
\begin{align}
&\phi_e(t) \leq \min \left\{ \de,  ~\underset{i \in \E^+(e)}{\min} \left\{ \frac{ s_i \big( \rho_i(t) \big) - \sum_{j \in \N_A} \beta_{i,j} \cdot \phi_j(t) }{ \beta_{i,e}  } \right\}  \right\} &\forall e \in \mathcal{L} ~, \label{eq:con_1a} \\
&\phi_e(t) \leq \de &\forall e \in \N_U ~, \label{eq:con_1b} \\
&0 \leq \phi_e(t) \leq \de \quad & \forall e \in \N_S \cup \N_A ~, \label{eq:con_1c} \\ 
&\sum_{i \in \E} \beta_{e,i} \cdot \phi_i(t) \leq \se & \forall e : \tau_e \in \M_S ~. \label{eq:con_1d}
\end{align}
\end{subequations}
where the flow constraints are relaxed as well. Equation \eqref{eq:con_1a} can be split into demand constraints which, together with  \eqref{eq:con_1b} and the second inequality in \eqref{eq:con_1c} can be combined into the single demand constraint \eqref{eq:con_2a} and into supply constraints. These take the form
\begin{align*}
 &\phi_e(t) \leq \underset{i \in \E^+(e)}{\min} \left\{ \frac{ s_i \big( \rho_i(t) \big) - \sum_{j \in \N_A} \beta_{i,j} \cdot \phi_j(t) }{ \beta_{i,e}  } \right\}   \quad &&\forall e \in \mathcal{L} ~ \\
\Leftrightarrow \quad & \beta_{i,e} \cdot \phi_e(t) + \sum_{j \in \N_A} \beta_{i,j} \cdot \phi_j(t) \leq s_i \big( \rho_i(t) \big) \quad && \forall e \in \E \setminus \N \text{ and } i \in \E^+(e) ~.
\end{align*}
The latter constraint takes the same form as \eqref{eq:con_1d} except that it holds for asymmetric junctions and non-merging junctions and both constraints can be combined into $\sum_{i \in \E \setminus \N_U} \beta_{e,i} \cdot \phi_i(t) \leq \se $ for all $e \in \E \setminus \mathcal{S}$. Since source cells and cells immediately downstream of sub-critical junctions have infinite capacity according to Assumption \ref{assumption:merges}, we can generalize this constraint to \eqref{eq:con_2c}. The first inequality in constraint \eqref{eq:con_1c} is left unchanged as \eqref{eq:con_2b} and we obtain the equivalent system of inequalities
\begin{subequations}
\begin{align}
\phi_e(t) &\leq \de \quad && \forall e \in \E ~,\label{eq:con_2a} \\
\sum_{i \in \E} \beta_{e,i} \cdot \phi_i(t) &\leq \se && \forall e \in \E  ~,\label{eq:con_2c} \\[-1.5ex]
\phi_e(t) &\geq 0 && \forall e \in \N_S \cup \N_A ~,\label{eq:con_2b}
\end{align}
\end{subequations}
which, together with the conservations law \eqref{eq:ctm_density}, are exactly the constraints of the relaxed FNC problem \eqref{eq:problem_relaxed}.

\section{Parameters Rocade Sud} \label{appendix:rocade} 

For simplicity, PWA demand functions $d_e(\rho_e(t)) = \min \big\{ v_e \cdot \rho_e(t), F_e \big\}$ and supply functions $s_e( \rho_e(t) ) = \min \big\{ F_e^s, w_e \cdot \big( \overline\rho_e - \rho_e(t) \big) \big\}$ are chosen. The free-flow velocity $v_e = 90$km/h equals the speed limit, the traffic jam density $\overline \rho_e = 250$cars/km is a standard choice and the storage space on onramps is set to $400$m as in \cite{gomes2006optimal,muralidharan2012optimal}, corresponding to a maximal queue length of $50$ cars. Cell sizes are chosen according to the real-world sensor locations such that cell lengths can be obtained from map data. Turning rates and maximal throughput are estimated from real data. The values are provided in Table \ref{tab:parameter_values}, where $e^+$ in $\beta_{e^+,e}$ means the mainline cell downstream of cell $e$. Data in the congested region are scattered, making identification of the supply function difficult. A trapezoidal fundamental diagram with $w_e = 1.05 \cdot \frac{F_e}{\overline \rho_e - \rho_e^c}$ and $F_e^s = 1.05 \cdot F_e$ provides a reasonable approximation, where $\rho_e^c := \frac{F_e}{v_e}$ can be interpreted as the critical density.  Choosing $F_e^s > F_e$ ensures that the capacity drop in the demand function (Section \ref{sec:heuristic}) comes into effect, since throughput is constraint by the demand of the bottleneck cell. With monotone demand functions, there is little difference. The maximal throughput of cell $e_{19}$ has been reduced to $4500$ cars/h, in order to reproduce  congestion patterns similar to the ones observed in reality. Further details on the real-world freeway are provided in \cite{de2015grenoble}.

\begin{table}[tp] 
\caption{Parameter values of the Grenoble Freeway}
\begin{center}
\begin{tabular}{rccccccccccc}
\toprule 
Cell & 1 & 2 & 3 & 4 & 5 & 6 & 7 & 8 & 9 & 10 & 11 \\ [1ex]
$l_e$ (km) & $0.5$ & $0.6$ & $0.5$ & $0.5$ & $0.7$ & $0.5$ & $0.5$ & $0.7$ & $1.3$ & $0.5$ & $0.5$ \\
$F_e$ (cars/h) & $4410$ & $5364$ & $5500$ & $4950$ & $5257$ & $4311$ & $4680$ & $4950$ & $5167$ & $4878$ & $4320$ \\ 
$\beta_{e^+,e}$ & $1$ & $1$ & $0.90$ & $1$ & $0.82$ & $1$ & $1$ & $1$ & $0.89$ & $1$ & $1$\\   
\bottomrule
\\
\toprule 
Cell & 12 & 13 & 14 & 15 & 16 & 17 & 18 & 19 & 20 & 21 \\ [1ex]
$l_e$ (km) & $0.5$ & $0.5$ & $0.5$ & $0.5$ & $0.5$ & $0.5$ & $0.5$ & $0.5$ & $0.5$ & $0.5$ \\
$F_e$ (cars/h) & $4800$ & $4644$ & $5304$ & $4923$ & $4608$ & $5120$ & $5049$ & $4500$ & $5049$ & $7574$ \\
$\beta_{e^+,e}$ & $0.90$ & $1$ & $0.84$ & $1$ & $1$ & $0.90$ & $1$ & $0.92$ & $1$ & $1$ \\
\bottomrule
\end{tabular}
\end{center}
\label{tab:parameter_values}
\end{table}%

\end{appendix}

\end{document}